\begin{document}

\newcommand{\INVISIBLE}[1]{}

\newtheorem{thm}{Theorem}[section]
\newtheorem{lem}[thm]{Lemma}
\newtheorem{cor}[thm]{Corollary}
\newtheorem{prp}[thm]{Proposition}
\newtheorem{conj}[thm]{Conjecture}

\theoremstyle{definition}
\newtheorem{dfn}[thm]{Definition}
\newtheorem{question}[thm]{Question}
\newtheorem{nota}[thm]{Notations}
\newtheorem{notation}[thm]{Notation}
\newtheorem*{claim*}{Claim}
\newtheorem{ex}[thm]{Example}
\newtheorem{counterex}[thm]{Counter-example}
\newtheorem{rmk}[thm]{Remark}
\newtheorem{rmks}[thm]{Remarks}
 
\def\labelenumi{(\arabic{enumi})}

\newcommand{\aro}{\longrightarrow}
\newcommand{\arou}[1]{\stackrel{#1}{\longrightarrow}}
\newcommand{\RA}{\Longrightarrow}

\newcommand{\mm}[1]{\mathrm{#1}}
\newcommand{\bm}[1]{\boldsymbol{#1}}
\newcommand{\bb}[1]{\mathbf{#1}}

\newcommand{\bA}{\boldsymbol A}
\newcommand{\bB}{\boldsymbol B}
\newcommand{\bC}{\boldsymbol C}
\newcommand{\bD}{\boldsymbol D}
\newcommand{\bE}{\boldsymbol E}
\newcommand{\bF}{\boldsymbol F}
\newcommand{\bG}{\boldsymbol G}
\newcommand{\bH}{\boldsymbol H}
\newcommand{\bI}{\boldsymbol I}
\newcommand{\bJ}{\boldsymbol J}
\newcommand{\bK}{\boldsymbol K}
\newcommand{\bL}{\boldsymbol L}
\newcommand{\bM}{\boldsymbol M}
\newcommand{\bN}{\boldsymbol N}
\newcommand{\bO}{\boldsymbol O}
\newcommand{\bP}{\boldsymbol P}
\newcommand{\bY}{\boldsymbol Y}
\newcommand{\bS}{\boldsymbol S}
\newcommand{\bX}{\boldsymbol X}
\newcommand{\bZ}{\boldsymbol Z}

\newcommand{\cc}[1]{\mathcal{#1}}
\newcommand{\ccc}[1]{\mathscr{#1}}

\newcommand{\ca}{\cc{A}}

\newcommand{\cb}{\cc{B}}

\newcommand{\cC}{\cc{C}}

\newcommand{\cd}{\cc{D}}

\newcommand{\ce}{\cc{E}}

\newcommand{\cf}{\cc{F}}

\newcommand{\cg}{\cc{G}}

\newcommand{\ch}{\cc{H}}

\newcommand{\ci}{\cc{I}}

\newcommand{\cj}{\cc{J}}

\newcommand{\ck}{\cc{K}}

\newcommand{\cl}{\cc{L}}

\newcommand{\cm}{\cc{M}}

\newcommand{\cn}{\cc{N}}

\newcommand{\co}{\cc{O}}

\newcommand{\cp}{\cc{P}}

\newcommand{\cq}{\cc{Q}}

\newcommand{\cR}{\cc{R}}

\newcommand{\cs}{\cc{S}}

\newcommand{\ct}{\cc{T}}

\newcommand{\cu}{\cc{U}}

\newcommand{\cv}{\cc{V}}

\newcommand{\cy}{\cc{Y}}

\newcommand{\cw}{\cc{W}}

\newcommand{\cz}{\cc{Z}}

\newcommand{\cx}{\cc{X}}

\newcommand{\g}[1]{\mathfrak{#1}}

\newcommand{\af}{\mathds{A}}
\newcommand{\PP}{\mathds{P}}

\newcommand{\GL}{\mathrm{GL}}
\newcommand{\PGL}{\mathrm{PGL}}
\newcommand{\SL}{\mathrm{SL}}
\newcommand{\NN}{\mathds{N}}
\newcommand{\ZZ}{\mathds{Z}}
\newcommand{\CC}{\mathds{C}}
\newcommand{\QQ}{\mathds{Q}}
\newcommand{\RR}{\mathds{R}}
\newcommand{\FF}{\mathds{F}}
\newcommand{\DD}{\mathds{D}}
\newcommand{\VV}{\mathds{V}}
\newcommand{\HH}{\mathds{H}}
\newcommand{\MM}{\mathds{M}}
\newcommand{\OO}{\mathds{O}}
\newcommand{\LL}{\mathds L}
\newcommand{\BB}{\mathds B}
\newcommand{\kk}{\mathds k}
\newcommand{\bs}{\mathbf S}
\newcommand{\GG}{\mathds G}
\newcommand{\XX}{\mathds X}

\newcommand{\WW}{\mathds W}
\newcommand{\al}{\alpha}

\newcommand{\be}{\beta}

\newcommand{\ga}{\gamma}
\newcommand{\Ga}{\Gamma}

\newcommand{\om}{\omega}
\newcommand{\Om}{\Omega}

\newcommand{\vt}{\vartheta}
\newcommand{\te}{\theta}
\newcommand{\Te}{\Theta}

\newcommand{\ph}{\varphi}
\newcommand{\Ph}{\Phi}

\newcommand{\ps}{\psi}
\newcommand{\Ps}{\Psi}

\newcommand{\ep}{\varepsilon}

\newcommand{\vr}{\varrho}

\newcommand{\de}{\delta}
\newcommand{\De}{\Delta}

\newcommand{\la}{\lambda}
\newcommand{\La}{\Lambda}

\newcommand{\ka}{\kappa}

\newcommand{\si}{\sigma}
\newcommand{\Si}{\Sigma}

\newcommand{\ze}{\zeta}

\newcommand{\fr}[2]{\frac{#1}{#2}}
\newcommand{\vs}{\vspace{0.3cm}}
\newcommand{\na}{\nabla}
\newcommand{\pd}{\partial}
\newcommand{\po}{\cdot}
\newcommand{\met}[2]{\left\langle #1, #2 \right\rangle}
\newcommand{\rep}[2]{\mathrm{Rep}_{#1}(#2)}
\newcommand{\repo}[2]{\mathrm{Rep}^\circ_{#1}(#2)}
\newcommand{\hh}[3]{\mathrm{Hom}_{#1}(#2,#3)}
\newcommand{\modules}[1]{#1\text{-}\mathbf{mod}}
\newcommand{\vect}[1]{#1\text{-}\mathbf{vect}}

\newcommand{\Modules}[1]{#1\text{-}\mathbf{Mod}}
\newcommand{\dmod}[1]{\mathcal{D}(#1)\text{-}{\bf mod}}
\newcommand{\spc}{\mathrm{Spec}\,}
\newcommand{\an}{\mathrm{an}}
\newcommand{\NNo}{\NN\smallsetminus\{0\}}

\newcommand{\pos}[2]{#1\llbracket#2\rrbracket}

\newcommand{\lau}[2]{#1(\!(#2)\!)}

\newcommand{\cpos}[2]{#1\langle#2\rangle}

\newcommand{\id}{\mathrm{id}}

\newcommand{\one}{\mathds 1}

\newcommand{\ti}{\times}
\newcommand{\tiu}[1]{\underset{#1}{\times}}

\newcommand{\ot}{\otimes}
\newcommand{\otu}[1]{\underset{#1}{\otimes}}

\newcommand{\wh}{\widehat}
\newcommand{\wt}{\widetilde}
\newcommand{\ov}[1]{\overline{#1}}
\newcommand{\un}[1]{\underline{#1}}

\newcommand{\op}{\oplus}

\newcommand{\lid}{\varinjlim}
\newcommand{\lip}{\varprojlim}

\newcommand{\mcrs}{\bb{MC}_{\rm rs}}
\newcommand{\mclog}{\bb{MC}_{\rm log}}
\newcommand{\mc}{\mathbf{MC}}

\newcommand{\ega}[3]{[EGA $\mathrm{#1}_{\mathrm{#2}}$, #3]}

\newcommand{\asts}{\begin{center}$***$\end{center}}
\title[Connections with parameters]{Algebraic theory of formal regular-singular connections with parameters}

\author[P. H. Hai]{Ph\`ung H\^o Hai}

\address{Institute of Mathematics, Vietnam Academy of Science and Technology, Hanoi, 
Vietnam}

\email{phung@math.ac.vn}

\author[J. P. dos Santos]{Jo\~ao Pedro dos Santos}

\address{Institut de Math\'ematiques de Jussieu -- Paris Rive Gauche, 4 place Jussieu,
Case 247, 75252 Paris Cedex 5, France}

\email{joao\_pedro.dos\_santos@yahoo.com}

\author[P. T.  Tam]{Ph\d am Thanh T\^am}

\address{Department of Mathematics, Hanoi Pedagogical University 2, Vinh Phuc, Vietnam}
\email{phamthanhtam@hpu2.edu.vn}

\keywords{Regular-singular connections, Tannakian categories,  Group schemes (MSC 2020: 12H05, 13N15, 18M05).}

\begin{abstract}This paper is divided into two parts. The first is a review, through categorical lenses, of the classical theory of regular-singular differential systems over $\lau Cx$ and $\PP^1_C\smallsetminus\{0,\infty\}$, where $C$ is algebraically closed and of characteristic zero. It aims at reading the existing classification results as an equivalence between regular-singular systems and representations of the group $\ZZ$. In the second part, we deal with regular-singular connections over $\lau Rx$ and $\PP_R^1\smallsetminus\{0,\infty\}$, where $R=\pos C{t_1,\ldots,t_r}/I$. The picture we offer shows that regular-singular connections are equivalent to representations of $\ZZ$, now over $R$.  
\end{abstract}

\date{\today}
\maketitle

\section{Introduction}

This paper is an outgrowth of our study of regular-singular connections through the past years.  It is divided into two parts which although thematically close, are distinct in originality.  Indeed, Part I is a patient revision of classical theory  (\cite[Chapter 4]{coddington-levinson55}, \cite{manin65}, \cite{wasow76}, \cite[Section 16]{ilyashenko-yakovenko08}) of regular-singular connections (or differential systems) in a more categorical setting {\it plus} an exposition of a more recent original contribution of Deligne   \cite[\S15]{deligne87}. 
Part II is a study of the theory of regular-singular connections  on $\lau Rx$ and on $\PP^1_R\smallsetminus\{0,\infty\}$, where $R$ is a certain complete local ring. The method behind Part II comes in great part from \cite{hai-dos_santos20} and it is hoped that it will be a means to grasp op.cit. in a less complex-analytic setting.

The classical theory of formal regular-singular connections  presents roughly two classifications of these objects: one by reducing each system to one with constant coefficients (\cite{coddington-levinson55}, \cite{wasow76}, \cite{ilyashenko-yakovenko08}), and one by means of tensor products of  unipotent  and  rank-one connections \cite{manin65}.  As beautiful as they are, these classifications tend to give an incomplete picture due to the lack of categorical structures and equivalences. For example,  although systems of differential equations with constant coefficients play a fundamental role, their natural properties are seldom addressed.  
Our take on the matter, accomplished in Part I, is to use \cite{deligne70} {\it as guiding principle} and obtain an equivalence between formal regular-singular systems and representations of the ``fundamental group'', which is $\ZZ$. 
As far as we know, this point of view is adopted, over $\CC$, only in \cite{van_der_put-singer03}.
In addition, under this mindset, we are able to comment on the important theory of Deligne's tensor product of categories. Our approach to the theory of connections on $\PP^1\smallsetminus\{0,\infty\}$ follows the same path, but its structuring is facilitated by the formal case.

Part II contains new material on formal differential modules whose ring of constants is a complete local ring. Our original motivation for writing down this piece was to give a less technical  and algebraic version of our paper \cite{hai-dos_santos20} which, nevertheless, would allow us to see the main ideas in loc.cit. To wit, an abstract  picture stemming from \cite{hai-dos_santos20}
is this. Let $C$ be an algebraically closed field of characteristic zero,   $R$ a noetherian, local and complete $C$-algebra with maximal ideal $\g r$ and residue field  $C$. 
We now give ourselves two $R$-linear categories $\cC$ and $\cC'$; denote by   $\cC_{n}$ and $\cC_{n}'$ be the full subcategories of objects ``annihilated by $\g r^{n+1}$''. Now, suppose that $\cC_{0}\simeq \cC_{0}'$. We wish to conclude that $\cC\simeq \cC'$.  
The strategy is to promote $\cC_{0}\simeq \cC_{0}'$ into an equivalence $\cC_{n}\simeq \cC_{n}'$ for all $n$ and then to ``pass to the limit''. (Needless to say, this is only reasonable in certain cases.) Part II of the present work goes through this idea in the special case where $\cC$ is the category of regular-singular formal connections and $\cC'$ is the category of representations of the abstract group $\ZZ$. The equivalence between $\cC_0$ and $\cC_0'$ is derived here from the results of Part I, while   in \cite{hai-dos_santos20} we relied on \cite{deligne70}. 

Let us now review the remaining sections separately. In what follows, $C$ is an algebraically closed field of characteristic zero and for any $C$-algebra  $R$, we let $\vt$ stand for the derivation of $\lau Rx=\pos Rx[x^{-1}]$ defined by $\vt\sum a_kx^k=\sum ka_kx^k$.

Section \ref{08.05.2020--1} serves to introduce  basic notations and definitions: specially 
important are the {\it logarithmic connections}  and the {\it regular-singular} ones over $\lau Cx$, see Definitions \ref{08.04.2020--1} and \ref{15.04.2020--1}. Section \ref{11.09.2020--1} covers basic facts on {\it Euler connections}, which correspond to  differential systems of the form $\vt y =Ay$ in which $A$ is a matrix with entries on $C$ (Definition \ref{12.06.2020--1}). The approach is {\it categorical} and we study   the {\it Euler functor} from the category of ``endomorphisms'' to the category of logarithmic connections (Definition \ref{12.06.2020--5}). Most findings  contain little more than simple remarks on  spectral analysis of linear operators in finite dimension.

Section \ref{08.04.2020--3} brings to light one of the main actors in the whole theory: the {\it residue endomorphism} of a logarithmic connection. Most results of this part are well-known, although not phrased in our language (see Theorems \ref{03.03.2020--1} and \ref{02.03.2020--3}). 
But not all is referencing, and in Proposition \ref{09.04.2019--1} we show, motivated by our categorical take, how to limit ``the size of poles'' between an arrow of logarithmic models in terms of the difference of the exponents. This plays later an important role when dealing with regular-singular connections ``depending on parameters'' (e.g.   the proof of Theorem \ref{16.05.2020--1}).  The section ends with the construction of preferred logarithmic models of  regular-singular connections (Theorem \ref{06.07.2020--4}); we name these Deligne-Manin models, but many other names are    in the literature (canonical extensions, $\tau$-extensions, etc).  

Section \ref{25.06.2021--1} revisits Manin's elegant paper \cite{manin65} with the intention of presenting its gist as an equivalence between the categories of representations of $\ZZ$ and regular-singular connections. It begins by using  classical results to prove a fundamental structural theorem of \cite{manin65} and then goes on to study {\it unipotent} (Section \ref{17.09.2020--5})  and {\it diagonalizable} (Section \ref{17.09.2020--4}) regular-singular connections. The former category is    then proved to be equivalent to the category of unipotent endomorphism (see Theorem \ref{25.06.2021--4}); this allows us to observe that unipotent regular-singular  connections amount to representations of the additive group (Corollary \ref{15.09.2020--2}). We go on to exhibit an equivalence between the category of diagonalizable  connections and representations of the diagonal group scheme whose group of characters is $C/\ZZ$. Calling on set theory, we note that $C/\ZZ\simeq C^\ti$, which puts us in an ideal position to establish an equivalence between regular-singular connections and representations of $\ZZ$. This final goal  is obtained by means of the  Deligne tensor product of abelian categories. This construction is a delicate piece of category theory so that   some of the necessary results are to be written down in a separate work \cite{dos_santos21tp}. Here, we content ourselves with a brief presentation of the definitions and fundamental results (Section \ref{17.09.2020--1}). In Section \ref{24.06.2021--1} all is put together to arrive at the conclusion  motivating  the section, which is Corollary \ref{22.04.2020--2}.

 With Section \ref{01.07.2020--5} we end Part I with a  review of  an equivalence  between regular-singular connections on $\lau  Cx$
and on $\PP^1\smallsetminus\{0,\infty\}$ (Theorem \ref{12.06.2020--2}). We follow mostly the ideas in \cite[15.28--36]{deligne87} in proving  the key non-trivial point: all regular-singular connections on $\PP^1\smallsetminus\{0,\infty\}$ are ``Euler connections'', see Proposition \ref{09.06.2020--1}. 
From that and the knowledge obtained in the previous sections the desired equivalence follows without much effort.

We now begin to review the sections pertaining to Part II. In Section \ref{28.06.2021--1}, we fix a certain finite dimensional $C$-algebra $\La$ and start exploring the notion of objects in $C$-linear categories carrying an action of $\La$ (Definition \ref{28.06.2021--2}). 
This is to be applied to  categories of regular-singular connections and we show that most results from Part I carry over to this context. See for example the existence of Deligne-Manin models stated in Theorems \ref{10.03.2020--2} and   \ref{10.07.2020--3}.  Let us draw the reader's attention to the notion of freeness in relation  to $\La$,  see Definitions \ref{16.10.2020--1} and \ref{16.10.2020--2}, which plays a key role in the rest of the paper. 

In Section  \ref{28.06.2021--5}, after fixing a {\it complete local noetherian $C$-algebra $R$} having residue field $C$, we begin the study of {\it regular-singular connections over $\lau Rx$}. One of the most relevant concepts in this case is our definition of {\it residues and exponents} (Definition \ref{28.06.2021--6}) stating that ``exponents should be indifferent to reduction modulo   the maximal ideal of $R$''.  In particular,     exponents are   elements of       $C$. This definition allows us to prove Theorem \ref{16.05.2020--5}, the analogue of Theorem \ref{03.03.2020--1}, which shows that Euler connections still play a central role in this theory. Then, applying ideas around the theme of Hensel's Lemma, we explain how to lift the Jordan decomposition of an endomorphism between $R$-modules (Corollary \ref{17.05.2020--1}), which in turn allows us to deduce Theorem \ref{25.05.2020--1}, paralleling  Theorem \ref{02.03.2020--3}  in the present context. 
At this point, our  assumptions  on the $\pos Rx$-modules are in many places  strong---they are to be free---and improvements appear in Section \ref{28.06.2021--7}. 
We also draw attention to Theorem \ref{29.06.2020--1} and Remark \ref{09.09.2020--1}. In the former result, we present a criterion for a connection over $\lau Rx$ to underlie a flat $\lau Rx$--module. Since the fibres of $\spc {\lau Rx}\to\spc R$ are not generally of finite type over the residue field, the proof of Theorem \ref{29.06.2020--1}  relies on a beautiful result of Y.  Andr\'e,  which we reprove swiftly in Remark \ref{09.09.2020--1}.

Section \ref{28.06.2021--7} contains the first  main result,  Corollary \ref{16.07.2020--1}.  It shows the equivalence 
\[\tag{$*$}
\begin{array}{c}\text{   regular-singular connections} \\ \text{  over $\lau Rx$}
\end{array}
\xymatrix{\ar[rr]^-{\sim}&&}
\text{$R$-representations of $\ZZ$},  
\]  
  thus obtaining  the exact analogue of Deligne-Manin's theory from Section \ref{25.06.2021--1}. (No   assumption is made on the nature of the  $\lau Rx$ or $R$-modules underlying connections or representations.)
The heart of the matter is the existence of certain preferred logarithmic models (Deligne-Manin) for regular-singular connections over $\lau Rx$ and these are obtained in Theorem \ref{16.05.2020--1}. The proof of this result relies on the fact that we are able to ``pass to the limit'' of the models obtained previously---since $\pos Rx$ is a complete local ring---to construct a suitable logarithmic model. Such a limit process is only possible 
since exponents do not change from ``truncation to truncation'' and since the ``size of the pole'' of a given arrow is controlled by the differences of exponents (Proposition \ref{09.04.2019--1}). To see what can easily go wrong, the reader should read Counter-example \ref{28.06.2021--8}. 
Once the logarithmic models of Theorem \ref{16.05.2020--1} have been shown to exist, we are then able to apply a limit process to arrive at   the equivalence
$(*)$. 

The paper then ends with Section \ref{29.06.2021--1},  which shows a second main result: The restriction functor 
\[\tag{$**$}
\begin{array}{c}\text{ regular-singular connections} \\ \text{  over $\PP_R\smallsetminus\{0,\infty\}$}
\end{array}
\xymatrix{\ar[rr]^-{\text{restriction}}&&}
\begin{array}{c}\text{category of regular-singular} \\ \text{connections over $\lau Rx$}
\end{array}
\]  
is an equivalence.  (See Theorem \ref{20.10.2020--6}.) The proof is based on the previous techniques with one important modification: the fact that modules over $\pos Rx$ are constructed from limits leaves place to Grothendieck's GFGA, stating that coherent modules over $\PP_R$  are constructed by limits of  coherent modules over the truncations of $\PP_R$ modulo the maximal ideal. 

Finally, let us call the reader's attention to some important  works on ``differential structures depending on parameters'' which have appeared in recent times: these are \cite{nitsure1}, \cite{mfs0}, \cite{mfs},  \cite{fmfs}  and \cite{fmfs2}. At the end of   the introduction in \cite{hai-dos_santos20}, the reader shall find a brief summary of some of the ideas behind these works. 

\subsection*{Acknowledgements}
The research of the first and   third named authors is funded by the Vietnam Academy of Science and Technology under grant number NVCC01.01/22-23. The second named author wishes to thank I. Lopez-Franco for instructing him on the theory of the Deligne tensor product. He also profits to thank the CNRS for granting him a research leave from September to December 2020, when some parts of this text were elaborated. 
  
\subsection*{Notation and conventions}
\begin{enumerate}[(1)]\item In this text,  $C$ stands for an   algebraically closed field of characteristic zero.   
\item Given a (commutative and unital) ring $R$, we let $\lau Rx$ stand for $\pos Rx[x^{-1}]$  and $\vt:\lau Rx\to\lau Rx$ the derivation defined by  
$$\vt\sum a_nx^n=x\frac{d}{dx} \sum a_nx^n=\sum na_nx^n.$$
\item We let $\mm M_{m\ti n}(R)$, respectively $\mm M_n(R)$, stand for the associative ring of $m\ti n$ matrices, respectively $n\ti n$ matrices, with entries in a ring $R$. 
\item For a prime ideal $\g p$ in a ring $R$, we let $\bm k(\g p)$ stand for the residue field of the local ring $R_{\g p}$. 
\item If $A:V\to V$ is an endomorphism of        vector space over $C$, we let $\mm{Sp}_A$ stand for the set of its eigenvalues. Given $\vr$ an eigenvalue, $\bb G(A,\vr)$ denotes the generalized eigenspace of $A$ associated to $\vr$. 
\item For an abstract  group  or  group scheme $G$, we let $\rep CG$ stand for the category of finite dimensional $C$-linear representations of $G$.
\item Throughout the text,  $\tau$ stands for a subset of $C$ such that the natural map $\tau\to C/\ZZ$ is bijective. 
\item If $A$ and $B$ are subsets of $C$, we denote by $A\ominus B$ the set $\{a-b\,:\,a\in A,\,b\in B\}$. 
\end{enumerate}

\section*{\large{\bf Part I}}
\section{Definitions, terminology and basic results}\label{08.05.2020--1} 

For the convenience of the reader and to ease referencing, we recall some standard definitions.

\begin{dfn} \label{08.04.2020--1}The {\it category of connections},  $\mc(\lau Cx/C)$, has for    
\begin{enumerate}\item[{\it objects}]   those couples $(M,\na)$ consisting of a finite dimensional $\lau Cx$-space and a $C$-linear endomorphism $\na:M\to M$, called  {\it the derivation}, satisfying Leibniz's rule $\na(fm)=\vt(f)m+f\na(m)$, and the  
\item[{\it arrows}] from $(M,\na)$ to  $(M',\na')$ are  $\lau Cx$-linear morphisms $\ph:M\to M'$ such that $\na'\ph=\ph\na$.  
\end{enumerate}

The category of {\it logarithmic connections},    $\mclog(\pos Cx/C)$, has for     
\begin{enumerate}\item[{\it objects}] those couples $(\cm,\na)$ consisting of a finite   $\pos Cx$-module  and a $C$-linear endomorphism, called  {\it the derivation}, $\na:\cm\to\cm$ satisfying Leibniz's rule $\na(fm)=\vt(f)m+f\na(m)$, and   
\item[{\it arrows}] from $(\cm,\na)$ to $(\cm',\na')$ are $\pos Cx$-linear morphisms $\ph:\cm\to \cm'$ such that $\na'\ph=\ph\na$.  
\end{enumerate}
\end{dfn}

As is well-known, $\mc(\lau Cx/C)$ is an abelian category: subobjects, respectively quotients, shall be called subconnections, respectively quotient connections. Also, when speaking of the rank of a connection, we shall mean the dimension of the underlying $\lau Cx$-vector space. 
The category $\mclog(\pos Cx/C)$ is also abelian.

We posses an evident $C$-linear functor 
\[
\ga:\mclog (\pos Cx/C)\aro\mc (\lau Cx/C).
\]

\begin{dfn}\label{15.04.2020--1}An object $M\in\mc(\lau Cx/C)$ is said to be {\it regular-singular} if it   is   isomorphic to a certain $\ga(\cm)$. The full category of $\mc(\lau Cx/C)$ whose objects are regular-singular shall be denoted by $\mcrs(\lau Cx/C)$. 

Given $M\in\mcrs(\lau Cx/C)$, any object 
$\cm\in\mclog(\pos Cx/C)$ such that $\ga(\cm)\simeq M$ is called a {\it logarithmic model of $M$}. In case the model $\cm$ is, in addition, a {\it free} $\pos Cx$-module, we shall speak of a logarithmic {\it lattice}.
\end{dfn}

It is not hard to see that any object in $\mcrs(\lau Cx/C)$ admits a logarithmic lattice; indeed,  if $\cm$ is a logarithmic model, then $\cm_{\rm tors}=\{m\in\cm\,:\,xm=0\}$ is stable under $\vt$ and   $\cm/\cm_{\rm tors}$ is the desired logarithmic lattice.

Given $(\cm,\na)$ and $(\cm',\na')$ in $\mclog(\pos Cx/R)$, the $\pos Cx$--module $\cm\ot_{\pos Cx}\cm'$ becomes a logarithmic connection by means of 
\[
\na\ot\na':\cm\ot\cm'\aro\cm\ot\cm',
\]
\[ \sum m_i\ot m_i'\longmapsto\sum_i\na(m_i)\ot m_i'+m_i\ot\na'(m_i').
\]
We then obtain in   $\mclog(\pos Cx/C)$ 
the structure of a $C$-linear tensor category which gives $\mcrs(\lau Cx/C)$ the structure of a $C$-linear tensor category. (Note that in $\mclog$ we do not always have ``duals.'') Similar constructions then allow us to obtain the next proposition, which is explicitly written down in \cite[Lemma 3.10]{van_der_put-singer03}. See also the proof of  Proposition \ref{25.09.2020--1} further ahead. 

\begin{prp} The category   $\mcrs(\lau Cx/C)$ is an abelian subcategory of $\mc(\lau Cx/C)$ which is stable under direct sums, duals and tensor products. Furthermore, given $(M,\na)\in\mcrs(\lau Cx/C)$ and  a subobject $(M',\na')\subset (M,\na)$, respectively a quotient   $(M,\na)\to(M'',\na'')$, then both $(M',\na')$ and $(M'',\na'')$ are regular-singular. 
 \qed
\end{prp}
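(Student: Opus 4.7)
The plan is to check each closure property separately, making systematic use of the observation (proved just after Definition \ref{15.04.2020--1}) that every regular-singular connection admits not merely a logarithmic model but in fact a logarithmic \emph{lattice}, i.e.\ a free $\pos Cx$-module carrying a logarithmic connection.

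First I would handle direct sums and tensor products. These are the easy cases: if $\cm,\cm'$ are logarithmic models of $M,M'$ then $\cm\op\cm'$ (with the componentwise derivation) is a logarithmic model of $M\op M'$, and the explicit formula recalled just before the proposition shows that $\cm\ot_{\pos Cx}\cm'$ is a logarithmic model of $M\ot_{\lau Cx}M'$. For duals, I would first pass to a logarithmic \emph{lattice} $\cm$ of $M$; on the free module $\cm^{\vee}=\mm{Hom}_{\pos Cx}(\cm,\pos Cx)$ set $(\na^{\vee}\ph)(m)=\vt\!\bigl(\ph(m)\bigr)-\ph(\na m)$. A direct verification gives Leibniz's rule, so $\cm^{\vee}$ is a logarithmic connection whose image under $\ga$ is the dual of $M$ in $\mc(\lau Cx/C)$.

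The heart of the argument is the stability under subobjects and quotients, which yields the abelian subcategory statement as a formal consequence (kernels and cokernels in $\mc$ are subobjects and quotients in $\mc$, hence are regular-singular). Given $(M,\na)\in\mcrs$ with logarithmic model $\cm$, and a subconnection $M'\subset M$, identify $M$ with $\cm[x^{-1}]$ and set $\cm':=\cm\cap M'\subset M$. Then $\cm'$ is a $\pos Cx$-submodule of the noetherian module $\cm$, hence finitely generated; it is $\na$-stable since both $\cm$ and $M'$ are; and localizing at $x$ gives $\cm'[x^{-1}]=\cm[x^{-1}]\cap M'=M\cap M'=M'$. So $(M',\na|_{M'})\simeq\ga(\cm',\na|_{\cm'})$ is regular-singular. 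For a quotient $q:M\twoheadrightarrow M''$, the submodule $\cm'':=q(\cm)\subset M''$ is finitely generated, $\na''$-stable, and satisfies $\cm''[x^{-1}]=q(\cm[x^{-1}])=q(M)=M''$, so $M''\simeq\ga(\cm'')$ is regular-singular.

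The main obstacle I anticipate is conceptually minor: one must be slightly careful about the difference between logarithmic \emph{models} (which may have $x$-torsion, and in particular need not be free) and logarithmic \emph{lattices}. This matters mostly for the dual, where the formula for $\na^{\vee}$ is transparent only on a free module; the passage from models to lattices via $\cm/\cm_{\mathrm{tors}}$ (pointed out just after Definition \ref{15.04.2020--1}) removes this obstruction. Everything else is either a one-line construction on logarithmic models or an application of noetherianity of $\pos Cx$, so no genuinely new input is required beyond what has already been set up.
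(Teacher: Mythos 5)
Your argument is correct and follows the same line as the paper's own (which the paper largely delegates to \cite[Lemma 3.10]{van_der_put-singer03} and to the sketch of its later Proposition on stability under sub/quotient objects, where the model is intersected with the subconnection and quotients are handled via a model of the kernel). One small imprecision to note: in the subobject step you write ``set $\cm':=\cm\cap M'$'' after identifying $M$ with $\cm[x^{-1}]$, but this implicitly requires that $\cm\to\cm[x^{-1}]$ be injective, i.e.\ that the model be $x$-torsion-free; you should either pass first to $\cm/\cm_{\rm tors}$ (exactly as you do for duals) or replace $\cm$ by its image in $M$. Since you already flag the torsion issue in your final paragraph, this is cosmetic rather than a real gap.
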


Or course, not all objects of $\mclog(\pos Cx/C)$ have ``duals''.

\begin{ex}[Twisted models]\label{09.09.2020--3}Let $\de\in\ZZ$. Write $\one(\de)$ for the $\pos Cx$-submodule of $\lau Cx$ generated by $x^{-\de}$.  Clearly $\vt(\one(\de))\subset\one(\de)$ and in this way, whenever $\de\le0$,  we obtain a subobject of $(\pos Cx,\vt)$. More generally, for any $\cm\in\mclog(\pos Cx/C)$, we obtain a new logarithmic connection  $\cm(\de)$ by defining $\cm(\de)=\one(\de)\ot\cm$.
\end{ex}

\begin{ex}\label{10.09.2020--1}Let $(\cm,\na)$ and $(\cm',\na')$ be objects from $\mclog(\pos Cx/C)$ and on the $\pos Cx$-module $\ch:=\hh{\pos Cx}{\cm}{\cm'}$, let us define 
\[
D:\ch\aro\ch,\quad h\longmapsto \na'\circ h-h\circ\na.
\]
This defines a logarithmic connection called the {\it internal ``Hom}.'' In analogous fashion, we can defined the internal ``Hom'' for two connections. 

By means of the canonical isomorphism 
\[
\hh{\pos Cx}{\cm}{\cm'}\otu{\pos Cx}\lau Cx\simeq \hh{\lau Cx}{\ga\cm}{\ga\cm'}
\] 
we see that the internal ``Hom''   constructed from two regular-singular connections is also regular--singular. 
\end{ex}

\section{Euler connections}\label{11.09.2020--1}
The simplest class of examples of logarithmic connections is given by ``Euler'' connections (the name is inspired by \cite[4.5]{coddington-levinson55}; it is also adopted by \cite[Example 15.9]{ilyashenko-yakovenko08}). In this section, we shall write $\mc$ and $\mclog$ in place of $\mc(\lau Cx/C)$ and $\mclog(\pos Cx/C)$.

\begin{dfn}[Euler connections]\label{12.06.2020--1}
Let $V$ be a finite dimensional vector space over $C$ and $A\in\mm{End}_C(V)$. The Euler logarithmic connection associated to the couple  $(V,A)$ is   defined by the couple $(\pos Cx\ot_CV,D_A)$, where    $D_A(f\ot v)= \vt(f)\ot v+f\ot Av$. Notation: $eul(V,A)$. 
\end{dfn}
Since Euler connections play a prominent role in the theory, let us spend some more time studying them.  

\begin{dfn}\label{12.06.2020--5}
Let $\bb{End}$ be the category whose   
\begin{enumerate}\item[{\it objects}] are couples $(V,A)$ consisting of a finite dimensional $C$-space $V$ and a $C$-linear endomorphism $A:V\to V$,  and whose 
\item[{\it arrows}] from $(V,A)$ and $(V',A')$ are $C$-linear morphisms $\ph:V\to V'$ such that $A'\ph=\ph A$.  
\end{enumerate}
\end{dfn}
Needless to say, letting $\g e=C$ be the 
one dimensional Lie algebra, 
$\bb{End}$ is none other than $\rep C{\g e}$. In particular,   $\bb{End}$ comes    with a canonical structure of  abelian,  $C$-linear {\it tensor} category \cite[I.3.1-2,]{bourbakilie}. (Its unit object is $(C,0)$.) Moreover,  
 for any couple $(V,A)$ and $(V',A')$ in $\bb{End}$, we can produce an ``internal Hom'' $\hh C{(V,A)}{(V',A')}$ \cite[I.3.3, Proposition 3]{bourbakilie} by endowing $\hh C{V}{V'}$   with the endomorphism 
\[
H_{A,A'}: \hh C{V}{V'}\aro\hh CV{V'},     
\quad \ph\longmapsto A'\ph-\ph A. 
\]

With these properties  in sight,  we now have a functor  
\[
eul:\bb{End}\aro\mclog ;
\]
it is obviously $C$-linear, exact and faithful. In addition, $eul$ is a tensor functor (the tensor structure on $\mclog$ is explained in Section \ref{08.05.2020--1}). 

As it should, the obvious morphism of $\pos Cx$-module
\[
eul(\hh C{(V,A)}{(V',A')})\aro \hh{\pos Cx}{\pos Cx\ot V}{\pos Cx\ot V'}
\]
defines an isomorphism in $\mclog$, where the right-hand-side has the ``internal Hom'' logarithmic connection  (cf. Example \ref{10.09.2020--1}). 

We end this section by studying  the influence of $eul$ on Hom sets.

\begin{lem}\label{10.03.2020--1}The following claims are true. 

\begin{enumerate}[1)]\item Suppose that $\mm{Sp}_A$ contains no negative integer. Then any horizontal section of $eul(V,A)$ has the form $1\ot v         $ with $v\in{\rm Ker}(A)$.  

\item Let $(V,A)$ and $(V',A')$ have the following property: the difference $\mm{Sp}_{A'}\ominus\mm{Sp}_{A}$ contains no negative integer. Then each arrow $\Ph:eul(V,A)\to eul(V',A')$ is of the form $\id\ot\ph:\pos Cx\ot_CV\to\pos Cx\ot_CV'$ for a certain $\ph:V\to V'$ such that $A'\ph=\ph A$.  In addition, if $\id\ot\ps=\Ph$, then $\ph=\ps$. Said otherwise, the natural arrow
\[
\hh{{\bf End}}{(V,A)}{(V',A')}\aro \hh{\mclog}{eul(V,A)}{eul(V',A')}
\]
is bijective.
\end{enumerate}\end{lem}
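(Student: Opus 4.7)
\emph{Plan.} I would handle (1) directly by power-series expansion, and then derive (2) from (1) using the identification of $\mclog$-arrows with horizontal sections of the ``internal Hom'' logarithmic connection recorded just before the lemma.

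For 1), write any element $m\in\pos Cx\ot_C V$ as a finite sum $m=\sum_{k\geq 0}x^k\ot v_k$. The Leibniz rule defining $D_A$ yields immediately
\[
D_A(m)\;=\;\sum_{k\geq 0}x^k\ot(A+k\,\id_V)v_k,
\]
so the equation $D_A(m)=0$ forces $(A+k)v_k=0$ for every $k\geq 0$. By the hypothesis $-k\notin\mm{Sp}_A$ whenever $k\geq 1$, hence $A+k$ is invertible in that range and $v_k=0$; only $v_0\in\mm{Ker}(A)$ survives, giving $m=1\ot v_0$ as claimed.

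For 2), I would invoke the canonical isomorphism in $\mclog$
\[
eul\bigl(\hh C{(V,A)}{(V',A')}\bigr)\;\simeq\;\hh{\pos Cx}{eul(V,A)}{eul(V',A')}
\]
recorded just before the statement. Under it, each arrow $\Ph: eul(V,A)\to eul(V',A')$ corresponds to a horizontal section of $eul(\hh CV{V'},H_{A,A'})$. The key spectral fact is that $\mm{Sp}_{H_{A,A'}}=\mm{Sp}_{A'}\ominus\mm{Sp}_A$: indeed, left-multiplication $L_{A'}$ and right-multiplication $R_A$ on $\hh CV{V'}$ are commuting endomorphisms with spectra $\mm{Sp}_{A'}$ and $\mm{Sp}_A$ respectively, and since $C$ is algebraically closed they can be simultaneously triangularized, so $H_{A,A'}=L_{A'}-R_A$ has spectrum $\{\la'-\la:\la'\in\mm{Sp}_{A'},\,\la\in\mm{Sp}_A\}$. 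The hypothesis in 2) therefore rules out $-k\in\mm{Sp}_{H_{A,A'}}$ for every $k\geq 1$, and part 1) applies: the horizontal section corresponding to $\Ph$ must be of the form $1\ot\ph$ with $\ph\in\mm{Ker}(H_{A,A'})$, i.e.\ with $A'\ph=\ph A$. Tracing back the internal Hom isomorphism shows that this means exactly $\Ph=\id\ot\ph$. Injectivity of $\ph\mapsto\id\ot\ph$ is immediate by evaluating at $1\ot v$ for $v\in V$.

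The only step requiring a moment of thought is the spectral identity $\mm{Sp}_{H_{A,A'}}=\mm{Sp}_{A'}\ominus\mm{Sp}_A$; it is entirely standard once one notes the commutation of $L_{A'}$ and $R_A$, but the whole reduction of 2) to 1) pivots on it, so it merits careful verification.
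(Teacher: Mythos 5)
Your proof is correct and follows essentially the same route as the paper: a power-series computation for (1), and for (2) the identification of $\hh{\mclog}{eul(V,A)}{eul(V',A')}$ with horizontal sections of $eul(\hh CV{V'},H_{A,A'})$ followed by a spectral estimate on $H_{A,A'}$. The one point of genuine difference is how you justify that $H_{A,A'}$ has no negative integer as eigenvalue: you give a self-contained argument via simultaneous triangularization of the commuting operators $L_{A'}$ and $R_A$, whereas the paper cites a classical fact (Wasow, and later records the precise statement $\mm{Sp}_{H_{A,A'}}=\mm{Sp}_{A'}\ominus\mm{Sp}_{A}$ as Lemma \ref{01.07.2019--2}); note that your triangularization argument strictly speaking only gives the inclusion $\mm{Sp}_{H_{A,A'}}\subseteq\mm{Sp}_{A'}\ominus\mm{Sp}_{A}$, not equality, but this inclusion is all the lemma needs. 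One small slip: in (1) you call $\sum_k x^k\ot v_k$ a \emph{finite} sum, but elements of $\pos Cx\ot_C V$ (with $V$ finite-dimensional) are $V$-valued power series and the sum over $k$ is in general infinite; the computation $D_A(m)=\sum_k x^k\ot(A+k)v_k$ is unaffected, so this does not harm the argument.
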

\begin{proof}(1) For each $v\in\mm{Ker}(A)$, the element  $1\ot v\in eul(V,A)$ is clearly horizontal. Conversely, let $\sum_nx^n\ot v_n$ be horizontal. Then 
\[
0=\sum_n x^n\ot (Av_n+nv_n).
\] 
This shows that $v_0\in\mm{Ker}(A)$. In addition,  if   $n>0$,  the equation $Ac=-nc$ cannot have a non-zero solution in $V$, and hence $v_n=0$. 

(2) Let $\Ph:eul(V,A)\to eul(V',A')$ be a non-zero arrow in $\mclog$ and    regard it as a non-zero horizontal element of 
\[
\hh{}{eul(V,A)}{eul(V;,A')}\simeq eul(\hh CV{V'},H_{A,A'}).
\] 
The assumption on the spectra together with a classical result from linear Algebra  shows that  $H_{A,A'}$ cannot have a negative integer as  eigenvalue:  Indeed, if $T\not=0$ is such that $A'T-TA=-k T$, then $\mm{Sp}_{A'+k\mm I}\cap\mm{Sp}_A\not=\varnothing$ \cite[Theorem 4.1, p.19]{wasow76} which forces 
$-k\in\mm{Sp}_{A'}\ominus\mm{Sp}_A$. By part (1), it follows that  $\Ph\in \pos Cx\ot\hh CV{V'}$ comes from an element  $\ph\in\hh CV{V'}$ such that $0=H_{A,A'}(\ph)$.  The fact that $\ph$ is unique  follows from faithfulness of $eul$. 
\end{proof}


\section{Basic results in the theory of regular-singular connections}
\label{08.04.2020--3} 
We shall continue to write $
\mclog$ instead of $\mclog(\pos Cx/C)$ and $\mcrs$ instead of $\mcrs(\lau Cx/C)$.

\subsection{The residue and its applications}

Given $(\cm,\na)\in\mclog$, the very definition of the Leibniz rule assures that $\na(x\cm)\subset x\cm$ so that we obtain, by passage to the quotient, a $C$-linear endomorphism \[{\rm res}(\na):\cm/(x)\aro\cm/(x)\]   called the {\it residue} of $\na$. The set of eigenvalues of $\mm{res}(\na)$ is named the set of {\it exponents} of $\na$ and will be denoted by $\mm{Exp}(\na)$. 

The relevance of the set of exponents is visible through the following  central results. Their proofs are to be found in the classics \cite{coddington-levinson55} or \cite{wasow76}.

\begin{thm}[{\cite[Theorem 1, 4.4, p.119]{coddington-levinson55} or \cite[Theorem 5.1, p.21]{wasow76}}]\label{03.03.2020--1}Let $\cm$ be a free $\pos Cx$-module of finite rank affording a logarithmic connection $\na:\cm\to\cm$  such that no two of its exponents  differ by a \emph{positive integer} (e.g. they all lie in $\tau$). Then $(\cm,\na)\simeq eul( \cm/(x);\mm{res}(\na))$.\qed 
\end{thm}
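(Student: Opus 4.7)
The plan is to construct an isomorphism $(\cm,\na)\simeq eul(V, A_0)$ in $\mclog$, where $V=\cm/(x)$ and $A_0$ is the matrix of $\mm{res}(\na)$, by exhibiting a formal gauge transformation that straightens $\na$ into the constant-coefficient derivation $D_{A_0}=\vt+A_0$.

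First I would fix a $\pos Cx$-basis of $\cm$, identifying $\cm$ with $\pos Cx^{\,n}$ and expanding $\na=\vt+A(x)$ with $A(x)=A_0+xA_1+x^2A_2+\cdots$ in $\mm M_n(\pos Cx)$; by construction the residue $\mm{res}(\na)$ is represented by $A_0\bmod x$. The task reduces to producing $P(x)=I+xP_1+x^2P_2+\cdots\in\mm{GL}_n(\pos Cx)$ satisfying the intertwining relation
\[
\vt(P)+A(x)\,P = P\,A_0,
\]
since this is precisely the condition for the basis change given by $P$ to transport $\vt+A(x)$ into $\vt+A_0$.

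Matching coefficients of $x^k$ for each $k\geq 1$ converts this into the recursion
\[
(kI+\mm{ad}(A_0))(P_k)\;=\;-\sum_{i=1}^{k}A_iP_{k-i},\qquad \mm{ad}(A_0)(X):=A_0X-XA_0,
\]
whose right-hand side involves only the previously determined matrices $P_0=I,P_1,\ldots,P_{k-1}$. The crucial observation is that the $C$-linear endomorphism $kI+\mm{ad}(A_0)$ of $\mm M_n(C)$ has spectrum $\{k+(\la-\mu):\la,\mu\in\mm{Sp}(A_0)\}$; this is the standard spectral calculus for the commutator operator, already invoked inside the proof of Lemma \ref{10.03.2020--1} and traceable to \cite[Theorem 4.1]{wasow76}. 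The hypothesis that no two exponents differ by a positive integer says exactly that $\mu-\la\neq k$ for all $\la,\mu\in\mm{Exp}(\na)$ and all $k\geq 1$, so $kI+\mm{ad}(A_0)$ is invertible for every $k\geq 1$. Each $P_k$ is therefore uniquely determined, yielding $P\in\mm M_n(\pos Cx)$; and since $P(0)=I$, in fact $P\in\mm{GL}_n(\pos Cx)$. The associated $\pos Cx$-linear isomorphism $\pos Cx\otimes_CV\to\cm$ then intertwines $D_{A_0}$ with $\na$, which is the claimed isomorphism.

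The main obstacle is not really computational---the recursion is mechanical and there is no convergence issue because $\pos Cx=C\llbracket x\rrbracket$ already accommodates arbitrary formal series---but conceptual: one must recognise that the spectral gap hypothesis on $\mm{Exp}(\na)$ is precisely what makes each commutator-plus-shift operator $kI+\mm{ad}(A_0)$ invertible, and that this in turn is exactly what is needed to close the recursion at every positive order. In particular, whenever the spectral condition fails, one expects obstructions (``resonances'') in the solvability of the Sylvester equation at the offending $k$, which correctly reflects the fact that the conclusion of the theorem can fail without this assumption.
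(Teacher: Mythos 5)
Your proof is correct and is precisely the classical power-series normalization argument that the paper delegates to \cite[Theorem 1, 4.4]{coddington-levinson55} and \cite[Theorem 5.1]{wasow76}: gauge the system to constant coefficients by solving $(kI+\mm{ad}(A_0))P_k=-\sum_{i=1}^{k}A_iP_{k-i}$ order by order, the spectral-gap hypothesis on $\mm{Exp}(\na)$ guaranteeing that $kI+\mm{ad}(A_0)$ is invertible for every integer $k\ge 1$, and $P(0)=I$ giving $P\in\mm{GL}_n(\pos Cx)$. Since the paper supplies no independent proof and simply cites these sources, your argument coincides with the intended one.
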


\begin{thm}[``Shearing'', cf. {\cite[Lemma, 4.4, p.120]{coddington-levinson55} or \cite[17.1]{wasow76}}]\label{02.03.2020--3}
Let $(E,\na_E)$ be an object of $\mcrs$. Then, it is possible to find a logarithmic lattice $(\ce,\na_\ce)$ for $(E,\na_E)$ such such that all exponents of $\na_\ce$ lie in $\tau$. \qed
\end{thm}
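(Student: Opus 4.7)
My plan is to start from any logarithmic lattice $(\ce,\na)$ of $E$ (which exists by the observation following Definition \ref{15.04.2020--1}) and modify it step by step, \emph{shifting one exponent into $\tau$ at a time}, via the classical shearing transformation. Since the map $\tau\to C/\ZZ$ is bijective, each $\vr\in\mm{Sp}_{\mm{res}(\na)}$ admits a unique integer $n_\vr\in\ZZ$ with $\vr-n_\vr\in\tau$. I will track the non-negative integer
\[
N(\ce)=\sum_{\vr\in\mm{Sp}_{\mm{res}(\na)}}|n_\vr|\cdot\dim\bb G(\mm{res}(\na),\vr),
\]
and the goal becomes to reach $N(\ce)=0$. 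The proof is by induction on $N(\ce)$: if $N(\ce)=0$ all exponents lie in $\tau$ and we are done, otherwise I shall exhibit a new logarithmic lattice $\ce'$ (of the same $E$) with $N(\ce')<N(\ce)$.

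The induction step is the shearing construction. Pick $\vr$ with $n_\vr\neq 0$; I describe $n_\vr\ge 1$, the opposite case being symmetric. Choose a complement $V'$ of $\bb G(\mm{res}(\na),\vr)$ in $\ce/x\ce$ which is the sum of the other generalized eigenspaces, and lift a basis of $\ce/x\ce$ adapted to this decomposition to a $\pos Cx$-basis $e_1,\ldots,e_k,e_{k+1},\ldots,e_n$ of $\ce$. In this basis the matrix of $\na$ reads $A(x)=A_0+xA_1(x)$ with $A_0$ \emph{block-diagonal}, $A_0=\mathrm{diag}(A_\vr,A')$, where $\mm{Sp}(A_\vr)=\{\vr\}$ and $\vr\notin\mm{Sp}(A')$. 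Let $\ce'\subset x^{-1}\ce$ be the free $\pos Cx$-module generated by $xe_1,\ldots,xe_k,e_{k+1},\ldots,e_n$; obviously $\ce'\otimes_{\pos Cx}\lau Cx=E$, so $\ce'$ is still a lattice for $E$.

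A direct calculation using $\na(xe_i)=xe_i+x\na(e_i)$ and the fact that the off-diagonal blocks of $A_0$ vanish (this is where block-diagonality of $A_0$ is essential) shows that $\na(\ce')\subset \ce'$, so $(\ce',\na)$ is a logarithmic lattice. Computing the new residue in the basis $(xe_1,\ldots,xe_k,e_{k+1},\ldots,e_n)$ yields a block-upper-triangular matrix of the form $\left(\begin{smallmatrix} A_\vr+I_k & B \\ 0 & A' \end{smallmatrix}\right)$, whose spectrum is $\{\vr+1\}\cup\mm{Sp}(A')$ with the original multiplicities. For $n_\vr\le-1$, one instead uses the basis $(x^{-1}e_1,\ldots,x^{-1}e_k,e_{k+1},\ldots,e_n)$ and obtains the spectrum $\{\vr-1\}\cup\mm{Sp}(A')$. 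In either case $|n_\vr|$ drops by $1$ while the other integers $n_\nu$ (together with their multiplicities) are unchanged, hence $N(\ce')=N(\ce)-\dim\bb G(\mm{res}(\na),\vr)<N(\ce)$, and the induction closes.

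The main technical point to watch, and really the only obstacle, is the verification that $\ce'$ is stable under $\na$ together with the determination of its residue spectrum; both depend crucially on the preliminary reduction of the residue to block-diagonal form, which is precisely why one must lift a decomposition of $\ce/x\ce$ along \emph{generalized} eigenspaces before performing the shear. Without this preparation the upper-triangular block $B$ appearing in the new residue would be replaced by unwanted contributions in the lower-left block and the argument that the spectrum shifts exactly by $1$ at $\vr$ would fail.
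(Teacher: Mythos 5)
Your overall strategy is correct and matches the classical shearing argument (the paper cites the classics and gives no proof here, but uses exactly this shear in the proof of Theorem \ref{25.05.2020--1}), with one valuable addition: the explicit monovariant $N(\ce)$ that makes the induction watertight. However, you have a sign error that swaps the two cases, and as written the monovariant \emph{increases} rather than decreases.

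Concretely: you define $n_\vr$ by $\vr - n_\vr \in \tau$, so $n_\vr \ge 1$ means $\vr$ sits above $\tau$ by $n_\vr$ integer steps and must be \emph{decreased}. You then take the lattice generated by $xe_1,\ldots,xe_k,e_{k+1},\ldots,e_n$ and correctly compute that its residue is $\left(\begin{smallmatrix}A_\vr+I_k & B\\ 0 & A'\end{smallmatrix}\right)$ with spectrum $\{\vr+1\}\cup\mm{Sp}(A')$. But then $(\vr+1)-(n_\vr+1)=\vr-n_\vr\in\tau$ shows the new integer attached to $\vr+1$ is $n_\vr+1$, so $N$ goes up by $\dim\bb G(\mm{res}(\na),\vr)$, not down. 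For $n_\vr\ge 1$ you should instead take the lattice generated by $x^{-1}e_1,\ldots,x^{-1}e_k,e_{k+1},\ldots,e_n$ (this is also the one for which your claim $\ce'\subset x^{-1}\ce$ is informative), whose residue is block \emph{lower}-triangular with spectrum $\{\vr-1\}\cup\mm{Sp}(A')$; the $xe_i$-shear belongs to the opposite case $n_\vr\le -1$. With the two cases interchanged the induction closes.

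A minor additional imprecision: after a shear it is possible that $\vr\mp 1$ collides with another eigenvalue $\sigma$ of the old residue, in which case ``the other integers $n_\nu$ and their multiplicities are unchanged'' is not literally true (the generalized eigenspaces merge). The monovariant $N$ still drops by $\dim\bb G(\mm{res}(\na),\vr)$ in that situation, because the merged eigenvalue carries the integer $n_\vr \mp 1 = n_\sigma$, but it is worth stating this case explicitly rather than asserting the other data are untouched.

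Finally, a clash with the referee's phrasing: the set generated by $xe_1,\ldots,xe_k,e_{k+1},\ldots,e_n$ is contained in $\ce$, so writing $\ce'\subset x^{-1}\ce$ there, while technically true, signals the very sign confusion above.
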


\begin{cor}\label{06.07.2020--3}Let  $(M,\na)\in\mcrs$. Then, there exists a finite dimensional vector space $V$ and $A\in\mm{End}_C(V)$ such that 
\begin{enumerate}[(1)]
\item All eigenvalues of $A$ are in $\tau$, and 
\item 
$M\simeq \ga eul(V,A)$. \qed
\end{enumerate} 
\end{cor}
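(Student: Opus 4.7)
The plan is to chain Theorem \ref{02.03.2020--3} (Shearing) with Theorem \ref{03.03.2020--1} in a direct way: the corollary should essentially fall out as a one-line consequence of the two results, with the only tiny check being that the hypothesis on exponents lying in $\tau$ implies the hypothesis of Theorem \ref{03.03.2020--1}.

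First, given $(M,\na)\in\mcrs$, I would invoke Theorem \ref{02.03.2020--3} to produce a logarithmic \emph{lattice} $(\ce,\na_\ce)$ of $(M,\na)$---that is, a free $\pos Cx$-module of finite rank endowed with a logarithmic connection---such that $\ga(\ce,\na_\ce)\simeq (M,\na)$ and every exponent of $\na_\ce$ lies in $\tau$.

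Next, I would verify that no two exponents of $\na_\ce$ differ by a positive integer, so that Theorem \ref{03.03.2020--1} applies. Indeed, if $\vr,\vr'\in\mm{Exp}(\na_\ce)\subset\tau$ satisfied $\vr-\vr'\in\ZZ\smallsetminus\{0\}$, then $\vr$ and $\vr'$ would have the same image in $C/\ZZ$, contradicting the bijectivity of $\tau\to C/\ZZ$. Hence $\mm{Exp}(\na_\ce)$ contains no pair of elements differing by a nonzero integer, so in particular no pair differing by a positive integer. Theorem \ref{03.03.2020--1} therefore gives an isomorphism
\[
(\ce,\na_\ce)\;\simeq\;eul\bigl(\ce/(x),\,\mm{res}(\na_\ce)\bigr)
\]
in $\mclog$.

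Finally, set $V:=\ce/(x)$ and $A:=\mm{res}(\na_\ce)$. The set of eigenvalues of $A$ is by definition $\mm{Exp}(\na_\ce)\subset\tau$, proving (1); applying the functor $\ga$ to the isomorphism above and composing with $\ga(\ce,\na_\ce)\simeq (M,\na)$ yields $M\simeq \ga\,eul(V,A)$, proving (2). There is no real obstacle here: the content of the corollary is entirely packed into the two preceding theorems, and the only thing to observe is the elementary fact that distinct elements of $\tau$ never differ by an integer.
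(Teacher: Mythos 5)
Your proof is correct and follows exactly the route the paper intends: apply the shearing theorem (Theorem \ref{02.03.2020--3}) to get a lattice with exponents in $\tau$, note that bijectivity of $\tau\to C/\ZZ$ forbids two exponents differing by a nonzero integer, and then invoke Theorem \ref{03.03.2020--1}. The paper leaves this corollary unproved precisely because it is this immediate chaining.
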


Another relevant feature of regular-singular connections unfolded by  the exponents is: 

\begin{prp}\label{09.04.2019--1}
Let $\phi:E\to F$ be an arrow of $\mcrs(\lau C x/C)$. Let $\ce$ and $\cf$ be models for $E$ and $F$ and assume that $\cf$ is in fact a lattice. We abuse notation and write $\vt$ for all derivations in sight (viz.    $E\to E$, $\ce\to\ce$, etc).  

\begin{enumerate}[(1)]
\item Let $\vr\in\mm{Exp}(\ce)$ and let $s\in \ce$ be such that 
\[(\vt-\vr)^\mu(s)\in x\ce\]
for a certain $\mu\in\NN$. 
Then, for all $k\in\ZZ$, we have
\[
(\vt-(\vr+k))^\mu(x^k\phi(s))=x^{k+1}\phi(\ce). 
\]  
\item Let $\de$ be the largest integer in   $\mm{Exp}(\cf)\ominus\mm{Exp}(\ce)$. Then $x^\de\phi(\ce)\subset \cf$. In particular, adopting the notation of Example \ref{09.09.2020--3}, there exists $\Ph:\ce\to\cf(\de)$ from $\mclog$ such that $\ga\Ph=\phi$.
\item Suppose that $\mm{Exp}(\cf)\ominus\mm{Exp}(\ce)$  contains no positive integer. Then   the natural arrow \[
\hh{}{\ce}{\cf}\aro\hh {}EF
\] 
is bijective. 
\end{enumerate}
\end{prp}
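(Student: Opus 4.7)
For part (1), the plan is to derive everything from the single Leibniz identity
\[
(\vt-(\vr+k))(x^ky)\;=\;x^k(\vt-\vr)(y),
\]
valid for any element $y$ of the ambient $\lau Cx$-space, because $\vt(x^k)=kx^k$. Iterating this $\mu$ times, applying it to $y=\phi(s)$, and using that $\phi$ commutes with $\vt$ gives $(\vt-(\vr+k))^\mu(x^k\phi(s))=x^k\phi((\vt-\vr)^\mu s)$, whose right-hand side lies in $x^{k+1}\phi(\ce)$ thanks to the hypothesis $(\vt-\vr)^\mu s\in x\ce$.

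For part (2), I would run a minimality argument and feed it into part (1). Since $\phi(\ce)$ is a finitely generated $\pos Cx$-submodule of $F$ and $\cf$ is a lattice, there is a smallest integer $N$ with $\phi(\ce)\subset x^{-N}\cf$; I assume for contradiction that $N>\de$. Minimality produces $s\in\ce$ with $t:=x^N\phi(s)\in\cf\smallsetminus x\cf$. Decomposing the class $\bar s\in\ce/x\ce$ along the generalised eigenspaces of $\mm{res}(\na_\ce)$ and lifting each piece to $\tilde s_j\in\ce$ with $(\vt-\vr_j)^{\mu_j}\tilde s_j\in x\ce$, where $\vr_j\in\mm{Exp}(\ce)$ is the associated exponent, one writes $s=\sum_j\tilde s_j+xs'$ for some $s'\in\ce$. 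Because $N$ bounds the pole of $\phi$ on \emph{all} of $\ce$, the correction $x^{N+1}\phi(s')$ lies in $x\cf$, so reducing modulo $x\cf$ yields $t\equiv\sum_j x^N\phi(\tilde s_j)$, and hence at least one $u_j:=x^N\phi(\tilde s_j)$ is nonzero in $\cf/x\cf$. Applying part (1) to this $\tilde s_j$ with $k=N$ and reducing modulo $x\cf$ shows $(\mm{res}(\na_\cf)-(\vr_j+N))^{\mu_j}\bar u_j=0$, so $\vr_j+N\in\mm{Exp}(\cf)$ and therefore $N=(\vr_j+N)-\vr_j$ is an integer in $\mm{Exp}(\cf)\ominus\mm{Exp}(\ce)$, contradicting $N>\de$. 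The factorisation $\Ph:\ce\to\cf(\de)$ then comes for free: $\cf(\de)=x^{-\de}\cf$ is a $\vt$-stable $\pos Cx$-submodule of $F$ containing $\phi(\ce)$, and $\phi|_\ce$ automatically commutes with $\vt$.

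Part (3) falls out as a corollary: if no positive integer occurs in $\mm{Exp}(\cf)\ominus\mm{Exp}(\ce)$, the argument of part (2) forces the minimal pole $N$ of $\phi(\ce)$ to be non-positive (or $\phi=0$), so $\phi(\ce)\subset x^{-N}\cf\subset\cf$, which yields the required preimage in $\hh{}{\ce}{\cf}$; injectivity of $\hh{}{\ce}{\cf}\to\hh{}{E}{F}$ is clear because $\cf$ is torsion-free over $\pos Cx$.

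The main subtle point is the bookkeeping in part (2). One must first pick $s$ so that the pole of $\phi(s)$ is actually extremal (hence the choice with $t\notin x\cf$), and then extract from the generalised-eigencomponents at least one $\tilde s_j$ whose own image $\phi(\tilde s_j)$ still attains that extremal pole; this is secured by the congruence $t\equiv\sum_jx^N\phi(\tilde s_j)\pmod{x\cf}$, which in turn relies on minimality of $N$ to keep the remainder $xs'$ under control. Once these compatibilities are in place, part (1) supplies the desired eigenvalue relation $\vr_j+N\in\mm{Exp}(\cf)$ essentially for free.
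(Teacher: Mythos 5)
Your proof is correct and takes essentially the same approach as the paper. The only cosmetic difference is in part (2), where you argue by contradiction via a minimal pole order $N$, while the paper performs a downward induction from any $k>\de$ with $x^k\phi(\ce)\subset\cf$ down to $\de$; the key ingredients (the commutation identity of part (1), the decomposition of $\ce/x\ce$ into generalised eigenspaces, and detecting an integer in $\mm{Exp}(\cf)\ominus\mm{Exp}(\ce)$ via part (1)) are the same.
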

\begin{proof}
\noindent(1)   Using the formula 
\[
[\vt-(\vr+i)]^\mu x^i=x^i(\vt-\vr)^\mu,  
\]
it follows that  
\[
\begin{split}
[\vt-(\vr+k)]^\mu(x^k\phi(s))&  = x^k(\vt-\vr)^\mu(\phi(s)) 
\\
&=x^k\phi[(\vt-\vr)^\mu(s)]
\\&\in x^{k+1}\phi(\ce).
\end{split}
\]

\noindent(2) If $x^\de\phi(\ce)\subset\cf$ we have nothing to do. Let then $k>\de$ be such that $x^k\phi(\ce)\subset \cf$. 
We choose $\vr\in\mm{Exp}(\ce)$ and  $s\in\ce\setminus x\ce$ such that $(\vt-\vr)^\mu(s)\in x\ce$.  
By the previous item,    
\[
(\vt-(\vr+k))^\mu(x^k\phi(s))\in x^{k+1}\phi(\ce)\subset x\cf.
\]
Since $x^k\phi(s)\in\cf$ and $\vr+k$ cannot be an eigenvalue of $\mm{res}_\cf$, it follows that $x^k\phi(s)\in x\cf$, which means that $x^{k-1}\phi(s)\in\cf$ because $\cf$ has no $x$-torsion. 

Let now $\mu_\al$ be the multiplicity of the exponent $\al$ and write
\[
\ce/x\ce\simeq \bigoplus_{\al\in\mm{Exp}} \mm{Ker}({\rm res}_\ce-\al)^{\mu_\al} . 
\]
For any $t\in\ce$, we have  
\[
t= \sum_\al s_\al +xt',
\]
where $(\vt-\al)^{\mu_\al}(s_\al)\in x\ce$ for each $\al$ and $t'\in\ce$. As a consequence, 
$x^{k-1}\phi(s_\al)\in\cf$ and we conclude that 
\[
x^{k-1}\phi(t)\in \cf.
\]
Proceeding by induction, we conclude that $x^\de\phi(\ce)\subset\cf$.

\noindent(3) Follows easily from the previous item and the observation that an arrow $\phi:\ce\to\cf$ which induces $0:E\to F$, must be trivial as $\cf\to F$ is injective.  
\end{proof}
 
Putting together Corollary \ref{06.07.2020--3} and  Proposition \ref{09.04.2019--1}-(3) we arrive at:

\begin{thm}[Deligne-Manin lattices; {\cite[Proposition II.5.4]{deligne70}}]\label{06.07.2020--4} 
Let $M\in\mcrs$ be given. There exists a logarithmic {\it lattice} $\cm$ for $M$ having all its exponents in $\tau$. In addition, if $\cm'\in\mclog$ is another logarithmic lattice for $M$ with all exponents in $\tau$, then there exists a \emph{unique} isomorphism $\ph:\cm\to \cm'$ rendering diagram 
\[
\xymatrix{\ga(\cm)\ar[r]^-{\sim}\ar[dr]_{\ga(\ph)} & M
\\
&\ar[u]_-{\sim}\ga(\cm')}
\] 
commutative. \qed
\end{thm}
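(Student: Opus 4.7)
The plan is to derive both parts of the theorem directly from Corollary \ref{06.07.2020--3} and Proposition \ref{09.04.2019--1}-(3), following the pointer given just before the statement.

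For existence, I would feed $M$ into Corollary \ref{06.07.2020--3} to obtain a couple $(V,A)$ with $\mm{Sp}_A\subset\tau$ and an isomorphism $M\simeq \ga\,eul(V,A)$, and then take $\cm:=eul(V,A)=(\pos Cx\ot_C V,D_A)$ as the required lattice. Freeness is immediate from the construction, and since the residue endomorphism induced on $\cm/(x)\simeq V$ is exactly $A$, one has $\mm{Exp}(\cm)=\mm{Sp}_A\subset\tau$.

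For uniqueness, let $\cm'$ be another lattice for $M$ with exponents in $\tau$, and fix isomorphisms $\al:\ga\cm\arou{\sim}M$ and $\al':\ga\cm'\arou{\sim}M$. The key observation is that the bijectivity of $\tau\to C/\ZZ$ forbids two distinct elements of $\tau$ from differing by a nonzero integer, so $\mm{Exp}(\cm')\ominus\mm{Exp}(\cm)$ contains no positive integer (in fact no nonzero integer). Proposition \ref{09.04.2019--1}-(3) then provides a bijection
\[
\hh{\mclog}{\cm}{\cm'}\arou{\sim}\hh{\mc}{\ga\cm}{\ga\cm'},
\]
and I would define $\ph$ to be the unique preimage of $(\al')^{-1}\circ\al$; this is exactly the arrow demanded by the commutative diagram, and its uniqueness is built into the bijection.

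It remains to verify that $\ph$ is an isomorphism. I would swap the roles of $\cm$ and $\cm'$ to obtain $\ps:\cm'\to\cm$ lifting $\al^{-1}\circ\al'$, and then note that $\ga(\ps\circ\ph)=\id_{\ga\cm}=\ga(\id_\cm)$. Applying the same Hom-bijection to the pair $(\cm,\cm)$---whose hypothesis is trivially met---forces $\ps\circ\ph=\id_\cm$, and symmetrically $\ph\circ\ps=\id_{\cm'}$. No serious obstacle arises here: the argument is clean bookkeeping once one observes that the defining property of $\tau$ is precisely what is needed to activate Proposition \ref{09.04.2019--1}-(3) in both directions.
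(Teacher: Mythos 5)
Your proposal is correct and reconstructs exactly the argument the paper intends: the existence part is an immediate read-off of Corollary \ref{06.07.2020--3} via $\cm=eul(V,A)$, and the uniqueness plus isomorphism part follows from Proposition \ref{09.04.2019--1}-(3) using the bijectivity of $\tau\to C/\ZZ$ to rule out integer differences of exponents, with the standard ``compose the two lifts and apply the Hom-bijection to $(\cm,\cm)$'' argument to obtain inverses. The paper gives no further details, only the same pointer you followed.
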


\section{Manin's theory revisited}\label{25.06.2021--1}
In \cite{manin65}, Manin gives a classification of  objects in $\mcrs(\lau Cx/C)$ using certain specific models ($M^\xi$ and $M^{(a)}$ in his notation).  We wish to  rewrite his results in the light of Euler connections (Section \ref{11.09.2020--1}), categories, functors and  group schemes. The strategy of this undertaking is to break up the category of regular-singular connections into those which are unipotent and those which are diagonal. 

As before, we write here \[\text{$\mc$,  $\mcrs$  and  $\mclog$}
\]
instead of 
\[\text{$\mc(\lau Cx/C)$, $\mcrs(\lau Cx/C)$ and $\mclog(\pos Cx/C)$.}
\]

\subsection{Jordan blocks}
For each $\la\in C$ and each positive integer $r$, let 
\[
U_{r,\la}=\begin{pmatrix}\la&0&\cdots&\cdots&0
\\
1&\ddots&&\vdots
\\
0&\ddots&\ddots&&\vdots
\\
\vdots&\ddots&\ddots&\vdots
\\
0&\cdots&0&1&\la\end{pmatrix}
\] be the {\it Jordan matrix} of size $r$ and eigenvalue $\la$. Let  $J_r(\la)$ be the object $(C^r,U_{r,\la})$ of $\bb{End}$ and, for a multi-index of positive integers $\bb r=(r_1,\ldots,r_n)$, let 
\[
J_{\bb r}(\la) = J_{r_1}(\la)\op\cdots\op J_{r_n}(\la). 
\]
 With this notation, Jordan's decomposition theorem and Theorem \ref{06.07.2020--3} immediately prove the ensuing result: 

\begin{thm}[{cf. \cite[Theorem 4]{manin65}}]\label{15.04.2020--2}Let $M\in\mcrs$ be given and suppose that $M$ is indecomposable and of dimension $r$. Then $M\simeq\ga(eul J_r(\la))$ for a certain $\la\in\tau$. \qed
\end{thm}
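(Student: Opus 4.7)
The plan is to combine Corollary \ref{06.07.2020--3} with the Jordan normal form theorem and the fact that $\ga\circ eul$ is an additive functor from $\bb{End}$ to $\mc$.

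First, I would apply Corollary \ref{06.07.2020--3} to produce a finite dimensional $C$-vector space $V$ and an endomorphism $A\in\mm{End}_C(V)$ with $\mm{Sp}_A\subset\tau$, such that $M\simeq\ga(eul(V,A))$. Because $M$ is of dimension $r$ over $\lau Cx$ and $\ga(eul(V,A))$ has rank $\dim_C V$ (since $eul(V,A)$ is free of rank $\dim_C V$ over $\pos Cx$), we must have $\dim_CV=r$.

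Next, apply the Jordan decomposition theorem over the algebraically closed field $C$ to write $(V,A)$ as a direct sum of its Jordan blocks in the category $\bb{End}$:
\[
(V,A)\;\simeq\;\bigoplus_{i=1}^{n} J_{r_i}(\la_i),
\]
where $\la_i\in\mm{Sp}_A\subset\tau$ and $\sum_i r_i=r$. Both $eul:\bb{End}\to\mclog$ and $\ga:\mclog\to\mc$ are $C$-linear additive functors, so they commute with finite direct sums. Hence
\[
M\;\simeq\;\ga(eul(V,A))\;\simeq\;\bigoplus_{i=1}^{n}\ga\bigl(eul\,J_{r_i}(\la_i)\bigr)
\]
as an isomorphism in $\mc$, and \emph{a fortiori} in $\mcrs$ (closure under direct sums being part of the structural proposition on $\mcrs$).

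Since $M$ is indecomposable, exactly one summand on the right can be non-zero; that is, $n=1$. Consequently $(V,A)\simeq J_{r_1}(\la_1)$ with $r_1=\dim_CV=r$ and $\la_1\in\tau$, which gives $M\simeq\ga(eul\,J_r(\la))$ as desired. There is essentially no hard step in the argument once Corollary \ref{06.07.2020--3} is in hand; the only point deserving a brief word is the additivity of $\ga\circ eul$, which is immediate from the very definitions of $eul$ (as tensoring by $\pos Cx$ and diagonally extending the endomorphism) and of $\ga$ (as base change to $\lau Cx$).
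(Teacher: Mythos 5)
Your proof is correct and takes exactly the route the paper intends: the paper's ``proof'' is simply the sentence preceding the theorem statement, which says that Jordan's decomposition theorem together with Corollary~\ref{06.07.2020--3} immediately give the result. You have merely spelled out the routine details (additivity of $\ga\circ eul$, each Jordan-block summand being non-zero, indecomposability forcing a single block), which is fine.
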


\subsection{Unipotent objects}\label{17.09.2020--5}
In an abelian tensor category (in the sense of \cite[Definition 1.15, p.118]{deligne-milne82}),  an object is {\it unipotent} if it has a filtration whose graded pieces are isomorphic to the unit object (see for example \cite[Definition 1.1.9]{shiho00}). 
Let $\mcrs^u$ and $\bb{End}^u$ be the categories of unipotent objects in $\mcrs$ and $\bb{End}$. According to 
\cite[Proposition 1.2.1,p.521]{shiho00}, both $\mcrs^u$ and $\bb{End}^u$  are {\it abelian}. (This can, of course, be verified directly without much effort.)
Another straightforward exercise is to show that $\mcrs^u$ and $\bb{End}^u$ are tensor subcategories of $\mcrs$ and $\bb{End}$, respectively.

The following simple lemmas  shall be employed below.

\begin{lem}\label{11.09.2020--2}Let $(V,A)\in\bb{End}$ be given. The ensuing conditions are equivalent. 
\begin{enumerate}[1)]\item  $(V,A)$ is unipotent. 
\item   $A$ is nilpotent.
\item The spectrum of $A$ is $\{0\}$.\qed \end{enumerate}
\end{lem}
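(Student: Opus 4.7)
The plan is to prove the chain $(1) \Rightarrow (2) \Rightarrow (3) \Rightarrow (2) \Rightarrow (1)$, with the equivalence of $(2)$ and $(3)$ being purely linear-algebraic and the bulk of the work being the translation between the tensor-categorical notion of unipotence and the nilpotence of $A$.

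For $(2) \Leftrightarrow (3)$, this is classical over any field but is especially clean here since $C$ is algebraically closed: if $A$ is nilpotent then its minimal polynomial is $X^m$, forcing $\mathrm{Sp}_A = \{0\}$; conversely if $\mathrm{Sp}_A = \{0\}$ then the characteristic polynomial is $X^{\dim V}$, and Cayley--Hamilton yields $A^{\dim V} = 0$.

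For $(1) \Rightarrow (2)$, I recall that in the tensor category $\bb{End}$ the unit object is $(C, 0)$, and a subobject of $(V, A)$ is an $A$-stable subspace. A unipotent structure on $(V, A)$ is therefore a flag $0 = V_0 \subset V_1 \subset \cdots \subset V_n = V$ of $A$-stable subspaces such that $A$ induces the zero map on each quotient $V_i/V_{i-1}$. Choosing a basis adapted to this flag realises $A$ as a strictly upper triangular matrix, hence $A^n = 0$.

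For the return implication $(2) \Rightarrow (1)$, assume $A$ is nilpotent and consider the kernel filtration
\[
0 = \ker A^0 \subset \ker A \subset \ker A^2 \subset \cdots \subset \ker A^N = V,
\]
where $N$ is large enough. Each term is $A$-stable, and the inclusion $A(\ker A^{i+1}) \subset \ker A^i$ shows that $A$ acts as zero on each graded piece $\ker A^{i+1}/\ker A^i$. Refining this filtration by any complete flag inside each graded piece (every one-dimensional subquotient inherits the zero action of $A$ and is thus isomorphic to the unit object $(C, 0)$) produces the desired unipotent filtration. There is no serious obstacle here; the only conceptual point to be careful with is checking that the notion of ``unipotent'' taken from the tensor-categorical definition reduces, in $\bb{End}$, to the existence of an $A$-stable flag with trivial graded pieces, which is immediate from the description of subobjects and the unit in $\bb{End}$.
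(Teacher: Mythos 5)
Your proof is correct and complete. The paper itself states this lemma with a \qed and no argument, treating it as standard linear algebra plus an unwinding of the definitions; your write-up supplies exactly the expected details. Two small remarks: in the direction $(1)\Rightarrow(2)$, nilpotence of a strictly upper-triangular matrix is immediate, as you note; in $(2)\Rightarrow(1)$, the refinement step works because once $A$ acts as zero on a graded piece, every subspace of that piece is automatically a subobject in $\bb{End}$, so lifting a complete flag poses no difficulty. One could alternatively pass through $(2)\Leftrightarrow(3)$ and Jordan form to get a single Jordan chain decomposition, but your kernel filtration argument is just as clean and avoids invoking more than is needed.
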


\begin{lem}\label{11.09.2020--3}Let $E$ be a unipotent object of $\mcrs$ and $\ps: E\to Q$ an epimorphism in $\mcrs$. Then $Q$ is also a unipotent. \qed 
\end{lem}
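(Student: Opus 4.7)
The plan is to push the given filtration forward along the epimorphism and check the graded pieces. Concretely, suppose we are given a filtration
\[
0=E_{0}\subset E_{1}\subset\cdots\subset E_{n}=E
\]
in $\mcrs$ whose successive quotients are all isomorphic to the unit object $\one=(\lau Cx,\vt)$. I will define $Q_{i}:=\ps(E_{i})\subset Q$ and study the induced filtration
\[
0=Q_{0}\subset Q_{1}\subset\cdots\subset Q_{n}=Q.
\]
Because $\ps$ is a morphism in $\mcrs$ (in particular an $\lau Cx$-linear map commuting with the derivation), each $Q_i$ is a subconnection, so this really is a filtration in $\mcrs$.

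Next I would analyze the graded pieces. The restriction $\ps|_{E_{i}}\colon E_{i}\to Q_{i}$ is surjective and carries $E_{i-1}$ into $Q_{i-1}$, so the universal property of the quotient furnishes an epimorphism
\[
\bar\ps_{i}\colon E_{i}/E_{i-1}\longrightarrow Q_{i}/Q_{i-1}
\]
in $\mcrs$. By hypothesis, the source is isomorphic to $\one$. The key observation is that $\one$ is a \emph{simple} object of $\mc(\lau Cx/C)$: a nonzero subconnection of $(\lau Cx,\vt)$ is, in particular, a nonzero $\lau Cx$-subspace of $\lau Cx$, and since $\lau Cx$ is a field this forces it to be all of $\lau Cx$. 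Consequently each $Q_{i}/Q_{i-1}$ is either zero or isomorphic to $\one$.

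To finish, I would discard the indices $i$ for which $Q_{i}/Q_{i-1}=0$, i.e.\ collapse consecutive equalities $Q_{i-1}=Q_{i}$, to obtain a strict filtration of $Q$ whose graded pieces are all $\simeq\one$; this exhibits $Q$ as unipotent. No step is really an obstacle here: the only point requiring care is the simplicity of $\one$, and, as noted, it is immediate from the fact that $\lau Cx$ is a field. Everything else is a formal consequence of the abelian structure on $\mcrs$.
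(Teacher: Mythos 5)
Your proof is correct and is the standard argument one would give; the paper itself offers no proof (the lemma is stated with an immediate \qed), so there is nothing to compare against beyond checking soundness. The one point that carries the argument — that $\one=(\lau Cx,\vt)$ is simple in $\mcrs$ because $\lau Cx$ is a field, so the image of each graded piece is either $0$ or $\one$ — is exactly right, and the rest (pushing the filtration forward, passing to induced maps on successive quotients, discarding the collapsed steps) is routine in any abelian category once one knows $\mcrs$ is stable under subobjects and quotients, which the paper records earlier.
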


With this vocabulary at hand, we now have:  
\begin{thm}\label{25.06.2021--4}The functor \[\ga eul:\bb{End}^u\aro\mcrs^u\] is an equivalence. 
\end{thm}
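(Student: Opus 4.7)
I would verify three properties of $\ga eul$ in turn: (i) it restricts to a functor $\bb{End}^u\to\mcrs^u$; (ii) this restriction is fully faithful; (iii) it is essentially surjective.

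For (i), if $(V,A)\in\bb{End}^u$ then $A$ is nilpotent by Lemma \ref{11.09.2020--2}, so I pick a flag $0=V_0\subset V_1\subset\cdots\subset V_n=V$ with $A(V_i)\subset V_{i-1}$. The $C$-linear, exact, tensor functor $\ga eul$ carries this to a filtration of $\ga eul(V,A)$ whose graded pieces are isomorphic to $\ga eul(C,0)=\mathbf{1}$; hence $\ga eul(V,A)\in\mcrs^u$.

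For (ii), one has $\mm{Sp}_A=\mm{Sp}_{A'}=\{0\}$, and the residue of $eul(V,A)$ (resp.\ $eul(V',A')$) is $A$ (resp.\ $A'$), so also $\mm{Exp}(eul(V,A))=\mm{Exp}(eul(V',A'))=\{0\}$. Consequently the differences $\mm{Sp}_{A'}\ominus\mm{Sp}_A$ and $\mm{Exp}(eul(V',A'))\ominus\mm{Exp}(eul(V,A))$ are both $\{0\}$, containing neither negative nor positive integers. Applying Lemma \ref{10.03.2020--1}(2) followed by Proposition \ref{09.04.2019--1}(3) yields the bijectivity of the successive natural maps
\[
\hh{\bb{End}}{(V,A)}{(V',A')}\aro \hh{\mclog}{eul(V,A)}{eul(V',A')}\aro\hh{\mcrs}{\ga eul(V,A)}{\ga eul(V',A')}.
\]

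Step (iii) is the crux and the expected principal obstacle. Given $M\in\mcrs^u$, Corollary \ref{06.07.2020--3} produces $(V,A)\in\bb{End}$ with $\mm{Sp}_A\subset\tau$ and $M\simeq\ga eul(V,A)$; it therefore suffices to prove $A$ nilpotent. Suppose, for contradiction, that $A$ has a non-zero eigenvalue $\vr\in\tau$. Since $A-\vr$ fails to be injective on $V$, it also fails to be surjective, and the cokernel $V/(A-\vr)V$ is a non-zero $C$-space on which $A$ acts as the scalar $\vr$; equivalently, there is an epimorphism $(V,A)\twoheadrightarrow(C,\vr)^{\oplus d}$ in $\bb{End}$ with $d\geq 1$. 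Applying the exact functor $\ga eul$ and projecting onto one summand yields a quotient $M\twoheadrightarrow \ga eul(C,\vr)$, so two invocations of Lemma \ref{11.09.2020--3} show that $\ga eul(C,\vr)$ is unipotent. Being of rank one, its unipotent filtration has length one and therefore $\ga eul(C,\vr)\simeq\mathbf{1}$. But the equation $\vt(f)+\vr f=0$ for $f=\sum_n f_nx^n\in\lau Cx$ reads $(n+\vr)f_n=0$ for every $n\in\ZZ$, and since $\vr\in\tau\setminus\{0\}$ is never an integer (as $\tau$ bijects to $C/\ZZ$), this forces $f=0$. Hence $\ga eul(C,\vr)$ has no non-zero horizontal section while $\mathbf{1}=(\lau Cx,\vt)$ manifestly does, a contradiction. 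This shows $A$ is nilpotent and completes the argument; the delicacy of (iii) lies precisely in detecting the non-triviality of hypothetical non-zero exponents through the horizontal-section computation.
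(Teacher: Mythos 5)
Your treatment of full faithfulness is exactly the paper's, and your step (i)---that $\ga eul$ does carry $\bb{End}^u$ into $\mcrs^u$---is a worthwhile verification the paper leaves implicit. The problem is in step (iii). You claim that ``$\vr\in\tau\setminus\{0\}$ is never an integer (as $\tau$ bijects to $C/\ZZ$)'', but this does not follow: a transversal $\tau$ to $\ZZ$ in $C$ contains \emph{exactly one} integer, and nothing in the convention on $\tau$ fixes that integer to be $0$. If, say, $5\in\tau$ while $0\notin\tau$, then $\vr=5$ is a possible non-zero eigenvalue appearing in Corollary \ref{06.07.2020--3}, one has $\ga eul(C,5)\simeq\one$, and your contradiction evaporates. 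The paper forestalls this by opening the proof with ``let us choose $\tau$ such that $\tau\cap\ZZ=\{0\}$''---a choice one is free to make since the theorem's statement is independent of $\tau$. You need to make the same normalization explicit; once that is done, the argument of step (iii) is correct.

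Beyond this repair, your essential-surjectivity argument is a genuine variation on the paper's. The paper reduces to indecomposable $M\simeq\ga eul(J_r(\la))$ via Theorem \ref{15.04.2020--2} and detects $\la=0$ through the Hom bijection applied to the non-zero arrow $\one\to M$ furnished by the unipotent filtration---a sub-object argument that recycles the full-faithfulness machinery. You instead present $M\simeq\ga eul(V,A)$ via Corollary \ref{06.07.2020--3}, extract a rank-one \emph{quotient} $\ga eul(C,\vr)$ from a hypothetical non-zero $\vr\in\mm{Sp}_A$, use Lemma \ref{11.09.2020--3} to see this quotient is unipotent and hence $\simeq\one$, and contradict this by computing horizontal sections directly. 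Your route avoids the indecomposable classification at the modest price of an explicit computation; both are sound, with the paper's perhaps a touch tidier given what is already in place.
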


\begin{proof}Let us choose $\tau$ such that $\tau\cap \ZZ=\{0\}$. If  $(V,A)$ and $(W,B)$ are such that $\mm{Sp}_A$ and $\mm {Sp}_B$ are contained in $\tau$, then Lemma \ref{10.03.2020--1}(2) and subsequently Proposition \ref{09.04.2019--1}(3) assure that the natural arrows 
\[
\begin{split}\hh{\bb{End}}{(V,A)}{(W,B)}&\aro \hh{\mclog}{eul(V,A)}{eul(W,B)}
\\
&\aro\hh{\mcrs}{\ga eul(V,A)}{\ga eul(W,B)} 
\end{split}
\] 
are bijections. Because of Lemma \ref{11.09.2020--2} and the choice of $\tau$, this fact proves that  $\ga eul$ is fully faithful when restricted to $\bb{End}^u$. 

 Let 
$M\in\mcrs$ be non-zero,  unipotent  and indecomposable. By Theorem \ref{15.04.2020--2}, there exists $\la\in \tau$  and $r>0$ such that    $M\simeq \ga eul(J_r(\la))$.  Unipotency assures the existence of a non-trivial arrow 
\[
\ph:\ga eul(J_1(0)) \aro \ga eul(J_r(\la)).
\]
From the bijection 
\[
\hh{}{J_1(0)}{J_r(\la)}\aro \hh{}{\ga eul(J_1(0))}{\ga eul(J_r(\la))}
\]
mentioned before, we conclude that $\hh{}{J_1(0)}{J_r(\la)}\not=0$. This shows that $\la=0$ and consequently   $J_r(\la)$ is unipotent in $\bb{End}$. 
Hence $M$ belongs to the essential image of $\ga eul$. In general, we note that any object of $\mcrs^u$ can be decomposed into a direct sum of   indecomposable objects and that these constituents are unipotent because of  Lemma \ref{11.09.2020--3}.  
\end{proof}

The task of describing the category $\mcrs^u$ now benefits from a well-known fact from the theory of algebraic groups.

If $\GG_a=\spc C[t]$ is the additive group scheme, \cite[II.2.2.6(a), p.178]{demazure-gabriel70} explains that  there exists an equivalence   
\[\mm{lev}_1:\rep C{\GG_a}\aro \bb{End}^u
\]
defined by associating to any representation $\rho:\GG_a\to \GL(V)$ the {\it nilpotent} endomorphism 
\[
\log(\rho(1)):V\aro V.
\] (The logarithm of a unipotent endomorphism is defined as usual \cite[II.6.1, p.51]{bourbakilie}.)
We derive:  
\begin{cor}\label{15.09.2020--2}The composition 
\[\rep C{\GG_a}\arou{\mm{lev}_1} \bb{End}^u\arou{\ga eul}\mcrs^u
\]
is an equivalence.\qed \end{cor}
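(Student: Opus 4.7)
The plan is essentially to observe that the corollary follows by composition of two equivalences that are already in hand. The functor $\ga eul:\bb{End}^u\to\mcrs^u$ was shown to be an equivalence in Theorem \ref{25.06.2021--4}, and the functor $\mm{lev}_1:\rep C{\GG_a}\to\bb{End}^u$ is an equivalence by the cited \cite[II.2.2.6(a), p.178]{demazure-gabriel70}. Since the composition of equivalences of categories is an equivalence, the corollary is immediate.

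In slightly more detail, I would first recall that $\mm{lev}_1$ sends a representation $\rho:\GG_a\to\GL(V)$ to the couple $(V,\log\rho(1))$, whose endomorphism is nilpotent; this is the image under $\mm{lev}_1$ and explains why the target category is $\bb{End}^u$ (matching Lemma \ref{11.09.2020--2}). The inverse sends $(V,A)$ with $A$ nilpotent to the representation $t\mapsto\exp(tA)$, which makes sense since $A$ is nilpotent so $\exp(tA)$ is a polynomial in $t$ with values in $\GL(V)$. Composing with $\ga eul$ then yields the desired equivalence into $\mcrs^u$.

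There is no real obstacle here: both ingredients have already been established, so the statement is a formal consequence. The only remark worth making is that one should briefly verify that $\mm{lev}_1$ indeed lands in $\bb{End}^u$ (rather than in all of $\bb{End}$), which is exactly the equivalence between unipotence of $(V,A)$ and nilpotence of $A$ recorded in Lemma \ref{11.09.2020--2}.
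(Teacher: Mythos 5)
Your proposal is correct and is exactly the argument the paper has in mind: the corollary is stated with an immediate \verb|\qed| because it follows by composing the equivalence $\mm{lev}_1$ from \cite[II.2.2.6(a), p.178]{demazure-gabriel70} with the equivalence $\gamma\, eul$ established in Theorem \ref{25.06.2021--4}. Your additional remark that $\mm{lev}_1$ lands in $\bb{End}^u$ via Lemma \ref{11.09.2020--2} is a correct and worthwhile sanity check, but it is not a gap in the paper's reasoning.
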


\subsection{Diagonalizable regular-singular connections}\label{17.09.2020--4}
 
\begin{dfn}A connection $(E,\na)$ in $\mcrs$ is diagonalizable if it is the direct sum of one-dimensional regular-singular connections. The full subcategory of all diagonalizable regular-singular connections shall be denoted by  $\mcrs^\ell$.
\end{dfn} 
 
Obviously  $\mcrs^\ell$ is  $C$-linear, stable under tensor products and duals in  $\mcrs$. 
In addition, it is a standard exercise in the theory of representations of associative rings to prove that $\mcrs^\ell$ is an abelian subcategory of $\mc$ (and hence an abelian subcategory of $\mcrs$). Indeed, letting $\cd$ stand for the ring of differential operators,  
$\mc$ is the category of left $\cd$-modules whose dimension over $\lau Cx$ is finite, and the fact that $\mcrs^\ell$ is an abelian subcategory is a straightforward consequence of the study of semi-simple modules made in \cite[VIII.4]{bourbakialgebre}, see in particular Corollary 3 of no.1 on p. 52.

Let now 
\[
\XX=\left\{\begin{array}{c}
\text{   isomorphisms classes}\\
\text{of rank one objects in $\mcrs$}\end{array}\right\}
\]   
and endow $\XX$ with the group structure induced by the tensor product. 
It is not hard to see that 
\[
C\aro \XX,\quad \la\longmapsto\text{isomorphism class of $\ga eul(C,\la)$}
\]
defines an isomorphism 
\begin{equation}\label{25.05.2020--2}
C/\ZZ\arou\sim \XX;
\end{equation}
see \cite[Theorem 3, p120]{manin65}. 
Indeed, let $(\cl,\na)\in\mclog$ be such that $\cl=\pos Cx\po\ell$ is free of rank one. Then, if  $\na(\ell)=a\ell$ and $\ell'=p\ell$ with $p\in\lau Cx^\ti$,  we see that $\na(\ell')=(a+p^{-1}\vt p)\ell'$. The desired result is a consequence of the fact that 
\[
\lau Cx^\ti\aro \pos Cx,\quad b\longmapsto\fr{\vt b}b
\]
establishes an isomorphism of groups  $\lau Cx^\ti\stackrel\sim\to\ZZ+x\pos Cx$.

Write $\mm{Diag}(\XX)$ for the diagonalizable affine group scheme having $\XX$ as group of characters. Said otherwise, 
\[\mm{Diag}(\XX)=\spc C[\XX], \] where  $C[\XX]$ is the group algebra,  cf. \cite[II.1.2.8, 154ff]{demazure-gabriel70} or  \cite[Part I, 2.5]{jantzen87}. As is well-known, the tensor category $\rep C{\mm{Diag}(\XX)}$ can be identified to the tensor category  of $\XX$-graded finite dimensional vector spaces \cite[II.2.2.5, p.177]{demazure-gabriel70}. Hence, from now on, given $V\in\rep C{{\rm Diad}(\XX)}$, we shall write $V_\xi$ for the component of degree $\xi$. 

For each $\xi\in\XX$, let $\hat \xi\in C$ be such that $\hat\xi+\ZZ$ corresponds, under the isomorphism \eqref{25.05.2020--2}, to $\xi$. Then, for each $V\in\rep C{{\rm Diag}(\XX)}$, we put 
\[
\LL(V) = \lau Cx\ot_C V
\]
and endow it with the derivation $D_V$ obtained from 
\[
D_V (1\ot v_\xi) =  \hat\xi\po( 1\ot v_\xi),\qquad v_\xi\in V_\xi.
\]
Obviously, the map $\LL$ gives rise to a  $C$-linear additive functor 
\[
\LL :\rep C{{\rm Diag}(\XX)}\aro\mcrs^\ell.
\]
It is perhaps useful to note that if $C_\xi$ is the $\XX$-graded vector space with a copy of $C$ in degree $\xi$ and zero elsewhere, then $\LL(C_\xi)=\ga eul(C,\hat \xi)$.

\begin{prp}\label{15.09.2020--1} The functor $\LL$ is a $C$-linear tensor equivalence.  
\end{prp}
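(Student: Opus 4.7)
The plan is to establish the three defining properties of a $C$-linear tensor equivalence in turn: full faithfulness, essential surjectivity, and compatibility with tensor products. Choose once and for all the representatives $\hat\xi\in\tau$, which is legitimate since $\tau\to C/\ZZ$ is bijective.

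For full faithfulness, note that a morphism $V\to W$ in $\rep C{\mm{Diag}(\XX)}$ is a graded linear map, so $\hh{\rep C{\mm{Diag}(\XX)}}VW=\bigoplus_\xi\hh C{V_\xi}{W_\xi}$. I need to see that $\hh{\mcrs}{\LL V}{\LL W}$ decomposes the same way. The key observation is that the $(\xi,\eta)$-component of any such morphism is a map between $\LL V_\xi=\ga eul(V_\xi,\hat\xi\bb I)$ and $\LL W_\eta=\ga eul(W_\eta,\hat\eta\bb I)$. Because $\hat\xi,\hat\eta\in\tau$, the difference $\hat\eta-\hat\xi$ is an integer only when $\xi=\eta$ (and then equals $0$). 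Consequently $\mm{Sp}_{A'}\ominus\mm{Sp}_A=\{\hat\eta-\hat\xi\}$ contains neither a negative nor a positive integer, and Lemma \ref{10.03.2020--1}(2) together with Proposition \ref{09.04.2019--1}(3) yield bijections
\[
\hh{\bb{End}}{(V_\xi,\hat\xi\bb I)}{(W_\eta,\hat\eta\bb I)}\arou\sim\hh{\mclog}{eul(V_\xi,\hat\xi\bb I)}{eul(W_\eta,\hat\eta\bb I)}\arou\sim \hh{\mcrs}{\LL V_\xi}{\LL W_\eta}.
\]
The leftmost Hom vanishes for $\xi\neq\eta$ (as $\hat\xi\neq\hat\eta$) and equals $\hh C{V_\xi}{W_\xi}$ for $\xi=\eta$. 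Summing over $(\xi,\eta)$ yields the desired bijection.

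Essential surjectivity is immediate: any $E\in\mcrs^\ell$ is by definition a direct sum of rank-one regular-singular connections, and the isomorphism \eqref{25.05.2020--2} identifies each summand with some $\ga eul(C,\hat\xi_i)=\LL(C_{\xi_i})$; hence $E\simeq\LL(\bigoplus_iC_{\xi_i})$.

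Finally, for the tensor structure, I must exhibit a coherent natural isomorphism $\LL V\ot_{\lau Cx}\LL W\to\LL(V\ot_CW)$. Reducing to simple summands, on the component indexed by $(\xi,\eta)$ the left-hand side is $\ga eul(C,\hat\xi+\hat\eta)$ (the $\ot$-product of the two one-dimensional Euler connections), while the right-hand side, in degree $\xi\eta$, is $\ga eul(C,\widehat{\xi\eta})$. These differ by the integer twist $n_{\xi,\eta}=\hat\xi+\hat\eta-\widehat{\xi\eta}\in\ZZ$, and multiplication by $x^{n_{\xi,\eta}}$ on each such component provides the required isomorphism of $\lau Cx$-modules compatible with derivations. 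Unit and associativity constraints reduce to the identities $n_{1,\xi}=0$ and $n_{\xi,\eta}+n_{\xi\eta,\zeta}=n_{\eta,\zeta}+n_{\xi,\eta\zeta}$, both clear from the formula defining $n$. The main obstacle throughout the argument is obtaining full faithfulness with the right bookkeeping: it is the careful choice of $\hat\xi\in\tau$ that simultaneously controls both positive and negative integer differences of exponents, letting Lemma \ref{10.03.2020--1}(2) and Proposition \ref{09.04.2019--1}(3) cooperate.
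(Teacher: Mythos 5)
Your proposal is correct and follows essentially the same route as the paper: you introduce the integer twist $n_{\xi,\eta}$ (the negative of the paper's $k(\xi,\eta)$), realise the structure morphism as multiplication by the matching power of $x$, and reduce coherence to a cocycle identity; your argument for full faithfulness---that choosing exponents in $\tau$ kills both positive and negative integer differences at once, so that Lemma~\ref{10.03.2020--1}(2) and Proposition~\ref{09.04.2019--1}(3) combine---makes explicit what the paper takes for granted. One small imprecision: $n_{1,\xi}$ equals $\widehat{0}$, the chosen representative of $0\in C/\ZZ$, which lies in $\ZZ$ but need not be zero; the unit constraint therefore reduces to $\widehat{0}\in\ZZ$ (as the paper states), not to $n_{1,\xi}=0$, the unit isomorphism $\one\to\LL(\one)$ being multiplication by $x^{-\widehat{0}}$ unless one normalises $\tau$ to contain $0$.
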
 
\begin{proof}
The only point requiring close examination is the tensor nature of $\LL$. For that, given $\xi,\eta\in\XX$, define $k(\xi,\eta)\in\ZZ$ by
\begin{equation}\label{14.09.2020--1}
\wh{\xi+\eta}=\hat \xi+\hat\eta+k(\xi,\eta).
\end{equation}
Now, let $V$ and $W$ be objects of $\bb{vect}_\XX$ and define an arrow of $\lau Cx$--spaces 
\[
\xymatrix{ \lau Cx\ot_C (V\ot_CW)\ar[rr]^-{\Ph_{VW}} && \left(\lau Cx\ot V\right)\otu{\lau Cx}\left(\lau Cx\ot W\right)
}
\]
by imposing that 
\[
\xymatrix{ 1\ot_C (v_\xi\ot w_\eta)\ar@{|->}[rr] && x^{k(\xi,\eta)}\po[\left(1\ot v_\xi\right)\ot\left(1\ot w_\eta\right)]
}
\]
whenever $v_\xi\in V_\xi$ and $w_\eta\in W_\eta$. Because of eq. \eqref{14.09.2020--1}, $\Ph_{VW}$ is an isomorphism in  $\mc$. Three lengthy but straightforward verifications assure that the couple $(\LL,\Ph)$ is a tensor functor: Indeed, the associativity constraint is a consequence of 
\[
k(\xi,\eta+\ze)+k(\eta,\ze)=k(\xi+\eta,\ze)+k(\xi,\eta),
\]
the commutative constraint of 
\[
k(\xi,\eta)=k(\eta,\xi),
\]
and the identity constraint of 
$\wh0\in\ZZ$
\end{proof}

Using basic cardinal arithmetic, we derive another simple description of     $\XX=C/\ZZ$ which is well--known in case $C=\CC$.

\begin{lem}\label{25.05.2020--3}The abelian groups $C/\ZZ$ and $C^\ti$ are (non-canonically) isomorphic. In particular, $\mathds X$ and $C^\ti$ are isomorphic. 
\end{lem}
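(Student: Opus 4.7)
The plan is to invoke the structure theorem for divisible abelian groups, which asserts that every divisible abelian group $D$ splits (non-canonically) as $D_{\mathrm{tors}}\oplus V$ with $V$ a $\QQ$-vector space, and that $D$ is determined up to isomorphism by the pair $(D_{\mathrm{tors}},\dim_\QQ V)$. Both $C/\ZZ$ and $C^\times$ are divisible: the former because $C$ is a $\QQ$-vector space, and the latter because algebraic closedness of $C$ forces every element to admit $n$-th roots for every $n\geq 1$.

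First I would identify the torsion subgroups. An element $a+\ZZ\in C/\ZZ$ is torsion iff $na\in\ZZ$ for some $n\geq 1$, i.e.\ iff $a\in\QQ$, so $(C/\ZZ)_{\mathrm{tors}}=\QQ/\ZZ$. On the other side, $(C^\times)_{\mathrm{tors}}$ is the group $\mu_\infty(C)=\bigcup_n\mu_n(C)$ of roots of unity in $C$; since $C$ is algebraically closed of characteristic zero, the compatible isomorphisms $\mu_n(C)\cong\ZZ/n\ZZ$ assemble into $\mu_\infty(C)\cong\QQ/\ZZ$.

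Next I would compare the torsion-free quotients $V_1:=C/\QQ$ and $V_2:=C^\times/\mu_\infty(C)$ as $\QQ$-vector spaces. Both have cardinality $|C|$ and are infinite-dimensional: this is immediate for $V_1$, and for $V_2$ it follows by noting that the classes of distinct rational primes are $\QQ$-linearly independent in $V_2$ (a purported linear relation, cleared of denominators, contradicts unique factorization in $\ZZ$ upon applying a $p$-adic valuation). Since for any infinite-dimensional $\QQ$-vector space the dimension equals the cardinality, one obtains $\dim_\QQ V_1=\dim_\QQ V_2=|C|$.

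Finally, having matched both the torsion parts and the dimensions of the torsion-free complements, the structure theorem yields a (non-canonical) isomorphism $C/\ZZ\cong C^\times$ of abstract abelian groups; the second assertion is then immediate from the isomorphism $\XX\cong C/\ZZ$ recorded in \eqref{25.05.2020--2}. The only mildly delicate point I expect is verifying the infinite-dimensionality of $V_2$ in the countable case $C=\overline{\QQ}$, where cardinality considerations alone do not suffice; the remainder is routine bookkeeping.
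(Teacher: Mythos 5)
Your argument is essentially the same as the paper's: both split each group as a copy of $\QQ/\ZZ$ plus a divisible torsion-free part (a $\QQ$-vector space), and then match the dimensions of those vector spaces by comparing cardinalities, using the fact that an infinite-dimensional $\QQ$-vector space has the same cardinal as any of its bases. Your observation that infinite-dimensionality of $C^\times/\mu$ needs a separate check when $C=\overline{\QQ}$ (for which your prime-independence argument works) is a valid point that the paper's write-up tacitly leaves to the reader, but it does not change the route.
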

\begin{proof}Let $\mu\subset C^\ti$ be the subgroup of roots of unity; it is a divisible group and hence there exists an isomorphism $C^\ti\simeq\mu\op (C^\ti/\mu)$. Similarly, $C/\ZZ\simeq(\QQ/\ZZ)\op(C/\QQ)$. Since $\mu\simeq\QQ/\ZZ$, we only need to show that $C/\QQ\simeq C^\ti/\mu$.

Now, $C/\QQ$ is a $\QQ$-vector space as is $C^\ti/\mu$ and we prove that   any $\QQ$-basis of $C/\QQ$ has the same cardinal as a $\QQ$-basis of $C^\ti/\mu$. 

Following \cite{bourbakiset}, write $\mm{Card}(S)$ to denote the cardinal of a set $S$. We need a simple result which we are unfortunately unable to find in the literature. 

\vspace{.2cm}
\noindent{\it Claim.} For any infinite dimensional $\QQ$-vector space $V$ with basis $B$,     the equality $\mm{Card}(V)=\mm{Card}(\cb)$ holds.

 As $\mm{Card}(\cb)\le\mm{Card}(V)$, we only need to show that $\mm{Card}(\cb)\ge\mm{Card}(V)$. 
Let $\g F$ be the set of finite subsets of $\cb$ and for each $F\in\g F$, write $V_F$ for the vector space generated by $F$. Clearly, 
\[
\mm{Card}(V )\le \mm{Card}(\coprod_F V_F).
\]
Since $\mm{Card}(V_F)=\mm{Card}(\QQ)$ \cite[Corollary 2, III.6.3]{bourbakiset}, and $\mm{Card}(\QQ)\le\mm{Card}(\g F)$, 
we conclude, with the help of \cite[Corollary 3, III.6.3]{bourbakiset}, that 
\[
\mm{Card}(\coprod_F V_F) =\mm{Card}(\g F).
\]  Finally, let $\g F_n$ be the subset of $\g F$ consisting of those subsets with cardinal bounded by $n$. Clearly, 
$\mm{Card}(\cb)^n\ge\mm{Card}(\g F_n)$, which shows that $\mm{Card}(\cb)\ge \mm{Card}(\g F_n)$ \cite[Corollary 4, III.6.3]{bourbakiset} and consequently that $\mm{Card}(\cb)\ge \mm{Card}(\g F)$. The claim is settled. 
\vspace{.2cm}

To end the proof, we note that $\mm{Card}(C/\QQ)\po\mm{Card}(\QQ)=\mm{Card}(C)$ \cite[Proposition 9, III.5.8]{bourbakiset}, and hence $\mm{Card}(C/\QQ)=\mm{Card}(C)$ \cite[Corollary 4, III.6.3]{bourbakiset}. Likewise, $\mm{Card}(C^\ti/\mu)=\mm{Card}(C^\ti)$ so that $\mm{Card}(C^\ti/\mu)=\mm{Card}(C)$. 
\end{proof}

\subsection{The Deligne tensor product}\label{17.09.2020--1}
In order to put the findings of Section \ref{17.09.2020--5} and Section \ref{17.09.2020--4} together---this is  the theme of Section \ref{24.06.2021--1}---we require Deligne's theory of the tensor product of $C$-linear abelian categories, see \cite[Section 5]{deligne90} and \cite{lopez_franco13}. Since   the amount of material necessary to   explain this theory and the pertinent  results is disproportionate to the rest of this text, we shall dedicate    \cite{dos_santos21tp}   to the matter.  On the other hand, for the convenience of the reader, we present a summary. 

  In what follows,  $\kk$ is  any field. Let  $A_1,\ldots,A_n$ and $X$ be $\kk$-linear abelian categories and  write 
\[\bb{Rex}(A_1,\ldots, A_n\,:\ X) \]
for the category of functors \[F:A_1\ti\cdots  A_n\aro  X\]
which are $\kk$-multi-linear and right-exact in each variable. 

\begin{dfn}[{\cite[Section 5]{deligne90}, \cite[Definition 1]{lopez_franco13}}]\label{22.06.2021--1}
Given $\kk$-linear abelian categories $A$ and $  B$, a couple $( P,T)$ consisting of a $\kk$-linear \textbf{abelian} category $ P$ and a functor $T\in \bb{Rex} ( A, B\,:\, P)$ is called a \textbf{ Deligne tensor product} of $ A$ and $ B$ if the following holds. For each $\kk$-linear abelian category $ X$, the functor   
\begin{equation}\label{24.06.2021--2}
\bb{Rex}( P\,:\, X)\aro \bb{Rex}( A, B\,:\, X),\quad F\longmapsto F\circ T,
\end{equation}
is an equivalence. 
\end{dfn}

\begin{ex}\label{24.06.2021--3}Let $G$ and $H$ be   group-schemes over $\kk$.   It then follows that the usual tensor product of vector spaces 
\[
\rep \kk G\ti\rep\kk H  \aro \rep \kk{G\ti H}
\]
is a Deligne tensor product. See \cite{dos_santos21tp}. 
\end{ex}

As argued by \cite[p.208]{lopez_franco13}, the drawback of Definition \ref{22.06.2021--1} is the requirement that $ P$ be abelian, while the properties involved speak solely of {\it right exactness}. 
For that reason, op.cit. employs a weaker version of the tensor product (the Kelly tensor product) and then studies the cases where the Kelly tensor product is a Deligne tensor product. This allows op.cit. to give a complete proof of  Deligne's existence theorem \cite[Proposition 5.13]{deligne90}, see \cite[Proposition 22]{lopez_franco13}, affirming that if $ A$ and $ B$ are {\it categories with length} (cf. \cite{lopez_franco13} page 217 for the definition), then the Deligne tensor product exists. 

The  question concerning the transport of tensor structures  in the theory of the Deligne tensor product is in order. This is dealt with  in  \cite[5.16-17]{deligne90}, but we found that op.cit. has two omissions: first,   the verification of the various functorial commutativity constraints for coherence is   left to the reader in the beginning of   \cite[Proposition 5.17]{deligne90}. Secondly, nowhere in \cite{deligne90}, a discussion is to be found on the monoidal nature of the functors obtained from monoidal functors via the equivalence \eqref{24.06.2021--2}. We explain these matters in more detail. 

Let $A$ and $B$ be $\kk$-linear abelian categories. 
Let $( A,\ot_{ A},\one_{ A})$  and $( B,\ot_{ B},\one_{ B})$ define symmetric monoidal   structures \cite[VII.1]{maclane98} on each one of them,  and  assume that, in addition, we have $\ot_{ A}\in\bb{Rex}(A,A\,:\,A)$ and $\ot_{ B}\in\bb{Rex}(B,B\,:\,B)$. 
On $ A\ti B$, let us introduce an evident structure of symmetric monoidal category: 
\[\ot_{ {AB}}\in\bb{Rex}( A, B, A, B\,:\, A\ti B),
\]
is given by 
\[
(a,b)\ot_{{AB}} (a',b') = (a\ot_{ A} a',b\ot_{ B} b').
\] 

Suppose that $( P,T)$ is a Deligne tensor product for $ A$ and $ B$. As explained in \cite{dos_santos21tp}, we then have an equivalence     
\[
(-)\circ T^{\ti n}:\bb{Rex}(\underbrace{ P,\ldots, P}_{n}\,:\, X)\aro \bb{Rex}(\underbrace{ A, B,\ldots, A, B}_{2n}\,:\, X)
\]
for each $n\ge1$. Letting \[
\ot_P\in\bb{Rex}(P,P\,:\,P)
\]
correspond  to $T\circ\ot_{ A B}$ under eq. \eqref{24.06.2021--2}, we then have a natural isomorphism 
\[
\mu:\ot_P\circ T^2\stackrel\mu\Longrightarrow T\circ\ot_{AB}.
\]
In addition, letting $\one_P=T(\one_A,\one_B)$, it then follows that $(P,\ot_P,\one_P)$ is a symmetric monoidal category and $T$ is a monoidal functor. These details are verified in \cite{dos_santos21tp}. (Needless to say, the difficulty is ensuring {\it coherence} of the monoidal structure.) 

Finally, let $F:A\ti B\to X$ be  any $\kk$-bilinear functor which is right-exact in each variable. Suppose that, giving $A\ti B$ the symmetric monoidal structure explained above, $F$ is monoidal. Then, a functor $\ov F\in\bb{Rex}(A,B\,:\,P)$ corresponding to $F$ under eq. \eqref{24.06.2021--2} is also monoidal.

\subsection{Conclusions}\label{24.06.2021--1}
Let $(\g T,\boxtimes)$ be the Deligne tensor product of $\mcrs^\ell$ and $\mcrs^u$. If
\[
P:  \mcrs^\ell\ti\mcrs^u\aro \mcrs
\] 
is the obvious  tensor product, we obtain through the equivalence \eqref{24.06.2021--2} a right-exact $C$-linear functor 
\[
\ov P:\g T \aro \mcrs 
\]
and a natural isomorphism  
\begin{equation}\label{15.09.2020--3}
\ov P\circ \boxtimes \stackrel\sim{\Longrightarrow}  P 
\end{equation}
in $\bb{Rex}(\mcrs^\ell\,,\,\mcrs^u\,:\,\mcrs)$.   
In addition, 
Section \ref{17.09.2020--1} assures that $\ov P$ is a tensor functor and   \cite[Proposition 5.13(vi), p.148]{deligne90}   that   $\ov P$ is also left exact.

\begin{thm}[The categorical Manin equivalence]\label{categorical_manin}The above defined functor $\ov P$ is an equivalence of $C$-linear abelian tensor categories.    
\end{thm}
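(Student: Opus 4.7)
The plan is to verify that $\ov P$ is an equivalence by separately checking essential surjectivity and full faithfulness; its $C$-linearity, exactness on both sides, and monoidal structure have already been secured by construction and by the discussion of Section \ref{17.09.2020--1}.

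\emph{Essential surjectivity.} I would invoke Theorem \ref{15.04.2020--2} to decompose an arbitrary $M\in\mcrs$ into a direct sum of indecomposables $\ga eul(J_r(\la))$ with $\la\in\tau$. The matrix identity $U_{r,\la}=\la I_r+U_{r,0}$, combined with the formula $(V,A)\ot(W,B)=(V\ot W,\ A\ot I+I\ot B)$ for the tensor product in $\bb{End}$, yields the factorization $J_r(\la)\simeq J_1(\la)\ot J_r(0)$. Because both $eul$ and $\ga$ are tensor functors, this gives
\[
\ga eul(J_r(\la))\simeq \ga eul(J_1(\la))\ot \ga eul(J_r(0)),
\]
whose first factor is diagonal (rank-one) and whose second is unipotent (Lemma \ref{11.09.2020--2}). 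Via the natural isomorphism \eqref{15.09.2020--3} and the additivity of $\ov P$, each indecomposable summand of $M$---and hence $M$ itself---lies in the essential image of $\ov P$.

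\emph{Full faithfulness.} The equivalences $\mcrs^\ell\simeq\rep C{\mm{Diag}(\XX)}$ (Proposition \ref{15.09.2020--1}) and $\mcrs^u\simeq\rep C{\GG_a}$ (Corollary \ref{15.09.2020--2}), together with Example \ref{24.06.2021--3}, identify
\[
\g T\simeq\rep C{\mm{Diag}(\XX)}\boxtimes\rep C{\GG_a}\simeq\rep C{\mm{Diag}(\XX)\ti\GG_a}.
\]
Because $\mm{Diag}(\XX)$ is diagonalizable and commutes with $\GG_a$, every object of $\rep C{\mm{Diag}(\XX)\ti\GG_a}$ decomposes as a direct sum of outer tensors $L_\xi\boxtimes U_\xi$, indexed by the characters $\xi\in\XX$ that occur. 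Additivity of $\ov P$ thus reduces the verification to showing bijectivity of
\[
\mm{Hom}_{\g T}(L\boxtimes U,\ L'\boxtimes U')\aro \mm{Hom}_{\mcrs}(L\ot U,\ L'\ot U')
\]
for rank-one diagonals $L,L'$ and unipotent $U,U'$. Using rigidity of both categories (and replacing $L\boxtimes U$ by its dual tensored with $L'\boxtimes U'$), this further reduces to verifying that $\ov P$ induces an isomorphism on $\mm{Hom}(\one,-)$ applied to pure tensors.

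Given $L\boxtimes U$ with $L$ of degree $\xi\in\XX$ and $U\simeq\ga eul(V,N)$ with $N$ nilpotent, Lemma \ref{10.03.2020--1}(1)---combined with the choice $\tau\cap\ZZ=\{0\}$---shows that $L\ot U\simeq \ga eul(V,\hat\xi I+N)$ admits only trivial horizontal sections unless $\hat\xi=0$, in which case $\mm{Hom}_{\mcrs}(\one,L\ot U)\simeq L\ot\mm{Ker}(N)$. On the Deligne side, $\mm{Hom}_{\g T}(\one,L\boxtimes U)=L^{\mm{Diag}(\XX)}\ot U^{\GG_a}$ likewise vanishes unless $\xi=0$ and equals $L\ot\mm{Ker}(N)$ otherwise. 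The two descriptions match under \eqref{15.09.2020--3}. The main obstacle is ensuring that the abstract Deligne tensor product $\g T$, defined only by its universal property (Definition \ref{22.06.2021--1}), can be computed concretely for comparison with $\mcrs$: this is precisely the role of Example \ref{24.06.2021--3}, which converts $\g T$ into an ordinary representation category of a product group scheme and thereby reduces both the decomposition of arbitrary objects and the computation of Hom-spaces to routine facts about representations.
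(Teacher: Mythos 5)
Your essential-surjectivity argument is identical to the paper's: decompose into indecomposables via Theorem \ref{15.04.2020--2}, split $J_r(\la)\simeq J_1(\la)\ot J_r(0)$, and invoke \eqref{15.09.2020--3}. Where you genuinely diverge is in full faithfulness, and the divergence is a real shortcut.

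The paper does \emph{not} assume that every object of $\g T$ is a direct sum of pure tensors $L\boxtimes U$. It first establishes that $\ov P$ is fully faithful on the additive subcategory $\g T_0$ generated by pure tensors (using a direct computation of $\eqref{20.04.2020--3}$ plus Deligne's Proposition 5.13(v)), and then handles arbitrary $X,X'\in\g T$ by choosing an epimorphism $Y\twoheadrightarrow X$ with $Y\in\g T_0$ and a monomorphism $X'\hookrightarrow Y'$ with $Y'\in\g T_0$, followed by a diagram chase. You instead exploit the concrete identification $\g T\simeq\rep C{\mm{Diag}(\XX)\ti\GG_a}$ from Example \ref{24.06.2021--3} to observe that, since the first factor is diagonalizable, every representation of the product group splits along the $\XX$-grading into $\bigoplus_\xi C_\xi\boxtimes W_\xi$ with $W_\xi$ a $\GG_a$-module---so in fact $\g T_0=\g T$ here and the diagram chase is unnecessary. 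This is correct and is arguably the cleaner way to phrase the argument in this specific situation; the paper's route has the advantage of being insensitive to whether $\g T$ happens to be the representation category of a product of a diagonalizable and a unipotent group, but it pays for that generality with the resolution argument.

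One step you should firm up: at the end you assert ``the two descriptions match under \eqref{15.09.2020--3}.'' Showing that $\mm{Hom}_{\g T}(\one,L\boxtimes U)$ and $\mm{Hom}_{\mcrs}(\one,L\ot U)$ have the same dimension is not yet the same as showing that the map induced by $\ov P$ between them is a bijection. You should either track the map explicitly (it identifies, after the isomorphisms, with the identity on $\mm{Ker}(N)$ when $\hat\xi=0$ and the zero map between zero spaces otherwise), or route the comparison through the natural arrow $\hh{\mcrs^\ell}{\one}{L}\ot_C\hh{\mcrs^u}{\one}{U}\to\hh{\g T}{\one}{L\boxtimes U}$ and Deligne's Proposition 5.13(v), which is what the paper does. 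This is a two-sentence fix, not a structural flaw.
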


\begin{proof} 
{\it Essential surjectivity:} It   suffices to show that any indecomposable $M\in\mcrs$ belongs to the essential image. According to Theorem \ref{15.04.2020--2}, $M\simeq \ga eul(J_r(\la))$. Using that $J_r(\la)\simeq J_1(\la)\ot J_r(0)$, from eq. \eqref{15.09.2020--3} we obtain   
\[
\begin{split}\ga eul(J_r(\la))&\simeq \ga eul(J_1(\la)) \ot \ga eul(J_r(0))
\\
&=\ov P\left[\ga eul(J_1(\la))\boxtimes \ga eul(J_r(0))\right].
\end{split}
\]

{\it Fully faithfulness:} Let    $L,L'\in\mcrs^\ell$ and $U,U'\in\mcrs^u$ so that we have an arrow induced by $P_{(L,U),(L',U')}$:
\begin{equation}\label{20.04.2020--3}
\hh{\mcrs^\ell}{L}{L'}\ot_C\hh{\mcrs^u}{U}{U'}\aro \hh{\mcrs}{L\ot U}{L'\ot U'} .
\end{equation}
 
That   \eqref{20.04.2020--3} is an isomorphism if $L=\one$ and $U'=\one$ is 
easily verified. Indeed, in this case, if $L'\not\simeq\one$, then   $\hh{\mcrs^\ell}{\one}{L'}=0$ and  $\hh{\mcrs}{U}{L'}=0$, which implies that both sides in   \eqref{20.04.2020--3} vanish; if $\al:\one\stackrel\sim\to L'$, then, $\hh{}{\one}{L'}=C\al$, and using the natural isomorphism  
\[\hh{}{\one\ot U}{L'\ot\one}\arou\sim\hh{}{U}{\one}
\]
we may identify   \eqref{20.04.2020--3} with the arrow which maps $\al\ot \ph$ to $\ph$. 
Making use of duals, we conclude that  \eqref{20.04.2020--3} is an isomorphism for all $L$, $L'$, $U$ and $U'$. Employing then  \cite[Proposition 5.13(v)]{deligne90}, 
we conclude that 
\[
\ov P_{L\boxtimes U,L'\boxtimes U'}:
\hh{\g T}{L\boxtimes U}{L'\boxtimes U'}\aro \hh{\mcrs}{\ov P(L\boxtimes U)}{\ov P(L'\boxtimes  U')}
\]
is an isomorphism.  

Let now $\g T_0$ be the full subcategory of $\g T$ whose objects are finite direct sums of objects of the form $L\boxtimes U$. The previous argument shows that $\ov P$ when restricted to $\g T_0$ is fully faithful.

Let now $X$ and $X'$ be arbitrary objects in $\g T$. We can then find two exact sequences,   
\[K\arou\iota Y\arou\pi X\aro0\]
and 
\[
K'\stackrel{\pi'}\longleftarrow Y' \stackrel{\iota'}\longleftarrow X' \longleftarrow0
\]
in which  $Y$, $Y'$, $K$ and $K'$ are in $\g T_0$. 
This is because each element in $\g T$ is the target of an epimorphism from an object of  $\g T_0$, see p. 212 of \cite{lopez_franco13}. 
That of the second exact sequence is a consequence     of the fact that  $\g T$ is the category of representations of a group scheme (Example \ref{24.06.2021--3}),
and hence any object of $\g T$ is the source of a monomorphism to an object of $\g T_0$. 

 Let $a:\ov P(X)\to \ov P(X')$ be given; as $\ov P|_{\g T_0}$ is full,  there exist $c_0:K\to K'$ and $b_0:Y\to Y'$ such that 
\[
\xymatrix{
&\ov P(K)\ar[r]^{\ov P(\iota)}\ar[d]_{\ov P(c_0)} & \ov P(Y)\ar[r]^{\ov P(\pi)}\ar[d]^{\ov P(b_0)}&\ov P(X)\ar[d]^a\ar[r]&0
\\
&\ov P(K')&\ar[l]^{\ov P(\pi')}\ov P(Y')&\ov P(X')\ar[l]^{\ov P(\iota')}&\ar[l]0 
}
\]commutes. 
As $\ov P|_{\g T_0}$ is faithful,  it is the case that 
\[
\xymatrix{K\ar[d]_{c_0}\ar[r]^{\iota} & Y\ar[d]^{b_0}
\\
K'&Y'\ar[l]^{\pi'}}
\]
commutes. Faithfulness of $\ov P|_{\g T_0}$ assures also that $b_0\iota=0$, since $\ov P(b_0\iota)=0$. Then, there exists $d:X\to Y'$ such that $d\pi=b_0$. Since $\ov P(\pi' d\pi)=0$, we can say that $\pi' d\pi=0$. Hence, there exists $a_0:X\to X'$ rendering \[
\xymatrix{
K\ar[d]_{c_0}\ar[r]^\iota & \ar[d]^{b_0}Y\ar[r]^\pi&\ar[d]^{a_0}X\ar[r]&0
\\
K'&\ar[l]^{\pi'} Y'&X'\ar[l]^{\iota'}&0\ar[l]
}
\]
commutative. As $\iota'$ and $\ov P(\iota')$ are  monomorphisms, and  $\pi$ and $\ov P(\pi)$ are  epimorphisms, we see that $\ov P(a_0)=a$ and that $a_0$ is unique with such a property.
\end{proof}

From now on, the following group scheme 
\[
\boxed{\g Z=\mm{Diag}(\XX)\ti\GG_a}
\]
shall play a relevant role.

Translating the equivalences described in Corollary \ref{15.09.2020--2}, in Proposition  \ref{15.09.2020--1}  and Theorem \ref{categorical_manin}, and applying Example \ref{24.06.2021--3}, 
we arrive at: 

\begin{cor}\label{22.04.2020--1}There exists an equivalence of $C$-linear abelian tensor categories 
\[\Ph:
\rep C{\g Z}\aro \mcrs 
\]
having the following properties: 
\begin{enumerate}[1)] 
\item Let $\xi\in\XX$ induce $\chi: \g Z\to \GG_m$ via ${\rm pr}_1:\g Z\to\mm{Diag}(\XX)$. 
Then $\Ph(\chi)$ lies in the class $\xi$.
\item Let   
$\rho: \GG_a\to\GL(V)$
induce $\si:\g Z\to \GL(V)$ via ${\rm pr}_2:\g Z\to \GG_a$.
Then \[\Ph(\si)\simeq \ga eul\left(V,\log\rho(1)\right).\]\qed
\end{enumerate}
\end{cor}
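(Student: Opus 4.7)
My plan is to assemble three previously established equivalences by invoking the universal property of the Deligne tensor product. First, Example \ref{24.06.2021--3} realizes $\rep C{\g Z}$ as the Deligne tensor product $\rep C{\mm{Diag}(\XX)}\boxtimes\rep C{\GG_a}$ via the external tensor product of representations. Second, Proposition \ref{15.09.2020--1} provides a $C$-linear tensor equivalence $\LL:\rep C{\mm{Diag}(\XX)}\simeq\mcrs^\ell$, and Corollary \ref{15.09.2020--2} yields an equivalence $\rep C{\GG_a}\simeq\mcrs^u$ (whose tensor nature follows from the fact that $\ga eul$ is a tensor functor by construction in Section \ref{11.09.2020--1}, and that $\mm{lev}_1$ intertwines the tensor products on both sides). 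Third, Theorem \ref{categorical_manin} provides the tensor equivalence $\ov P:\mcrs^\ell\boxtimes\mcrs^u\simeq\mcrs$.

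To chain these together I invoke functoriality of the Deligne tensor product in the form: two $C$-linear tensor equivalences $F_i:A_i\to B_i$ ($i=1,2$) between abelian categories admitting Deligne tensor products induce a tensor equivalence $F_1\boxtimes F_2: A_1\boxtimes A_2 \to B_1\boxtimes B_2$. This is a routine application of the universal property \eqref{24.06.2021--2}: apply it to the right-exact bilinear functor $T_B\circ(F_1\ti F_2)$, where $T_B$ is the defining functor of the target product, and do the symmetric construction with quasi-inverses to exhibit a two-sided inverse; the symmetric monoidal structure is transported by means of the recipe recalled at the end of Section \ref{17.09.2020--1}. Composing all the equivalences gives a functor
\[
\Ph:\rep C{\g Z}\arou\sim \rep C{\mm{Diag}(\XX)}\boxtimes\rep C{\GG_a}\arou\sim\mcrs^\ell\boxtimes\mcrs^u\arou{\ov P}\mcrs,
\]
which is the claimed equivalence of $C$-linear abelian tensor categories.

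To verify property (1), a character $\chi$ coming from $\xi\in\XX$ via $\mm{pr}_1$ corresponds to $C_\xi\boxtimes C$ under Example \ref{24.06.2021--3}; this is sent successively to $\LL(C_\xi)\boxtimes\one=\ga eul(C,\hat\xi)\boxtimes\one$ (using the remark preceding Proposition \ref{15.09.2020--1}) and then by $\ov P$ to $\ga eul(C,\hat\xi)\ot\one\simeq\ga eul(C,\hat\xi)$, which lies in the class $\xi$ by the very construction of the isomorphism \eqref{25.05.2020--2}. For property (2), a representation $\si=\rho\circ\mm{pr}_2$ corresponds to $C\boxtimes V$, and the chain of equivalences sends it to $\one\ot\ga eul(V,\log\rho(1))\simeq\ga eul(V,\log\rho(1))$. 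I expect no serious obstacle here since all the deep work has been carried out in the previous sections; the only delicate point is ensuring that every intermediate equivalence is a \emph{monoidal} functor coherently, which is precisely what was addressed in Section \ref{17.09.2020--1}.
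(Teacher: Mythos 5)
Your proposal is correct and follows exactly the route the paper intends: the corollary is stated in the paper as a direct translation of Corollary \ref{15.09.2020--2}, Proposition \ref{15.09.2020--1}, Theorem \ref{categorical_manin} and Example \ref{24.06.2021--3}, and your argument simply spells out the necessary functoriality of the Deligne tensor product and the unwinding of the three equivalences on the generating objects $C_\xi\boxtimes\one$ and $\one\boxtimes V$. The coherence details you flag as delicate are precisely those the paper defers to \cite{dos_santos21tp}.
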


We now set out to  identify $\mm{Diag}(\XX)\ti\GG_a$ with the algebraic envelope of the abstract group $(\ZZ,+)$. Let us recall what this means. 

Given an abstract group $\Ga$, there exists an affine group scheme $\Ga^{\rm aff}$ (over $C$) and an arrow 
\[u:\Ga\aro\Ga^{\rm aff}(C)\]
such that, for any {\it algebraic} group scheme $G$, the natural map 
\[
\hh{}{\Ga^{\rm aff}}{G}\aro\hh{}{\Ga}{G(C)}
\]
\[
\rho\longmapsto \rho(C)\circ u
\]
is bijective. We know of  three ways of constructing $\Ga^{\rm aff}$: by means of the main theorem of Tannakian theory \cite[Theorem 2.11]{deligne-milne82}, by means of   Freyd's adjoint  functor theorem \cite[Theorem 2, V.6]{maclane98}, or by means of Hochschild-Mostow's method \cite[p.1140]{hochschild-mostow69}, \cite[p.72]{abe80}. In case $\Ga=\ZZ$, the construction is   folkloric, but   the only  concrete references we were able to find  were \cite[5.3]{van_der_put-singer03}, which is not really what we want, and \cite[Example 1, p.23]{bass-lubotzky-magid02}, which is imprecise (there is no  need for $\wh\ZZ$ to appear in their conclusion). 

\begin{lem}\label{25.05.2020--5}Let $\al: \XX\arou\sim C^\ti$ be an isomorphism. Define    
\[
f:\ZZ\aro \mm{Diag}(\XX)(C)=\hh{}{\XX}{C^\ti} 
\]
\[
f(k):\xi\longmapsto\al(\xi)^k,
\]
and  $\iota:\ZZ\to\GG_a(C)$ as being the evident inclusion. Then 
\[
(f,\iota):  \ZZ\aro \g Z(C)
\]
is the affine envelope of $\ZZ$.   
In particular, there exists a tensor equivalence of $C$-linear categories 
\[
\Te:\rep C{\g Z } \aro \rep C\ZZ
\]
such that:
\begin{enumerate}[(1)]
\item Let $\xi\in\XX$ induce
$\chi:\g Z\to \GG_m$ via $\g Z\to\mm{Diag}(\XX)$. 
Then $\Te(\chi)$ corresponds to the representation defined by $1\mapsto\al(\xi)\in C^\ti$.
\item Let   
$\rho: \GG_a\to\GL(V)$
induce  $\si:\g Z\to \GL(V)$ via $\g Z\to\GG_a$. 
Then $\Te(\si)$ corresponds to the representation defined by $1\mapsto\rho(1)\in\GL(V)$.
\end{enumerate}
\end{lem}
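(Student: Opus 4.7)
The plan is to construct the equivalence $\Te$ concretely and then derive the affine-envelope claim via a Zariski density argument. By the product structure of $\g Z = \mm{Diag}(\XX)\ti\GG_a$, and in view of the descriptions obtained in Sections \ref{17.09.2020--5} and \ref{17.09.2020--4}, a representation of $\g Z$ on a finite-dimensional $V$ is equivalent to the datum of an $\XX$-grading $V = \bigoplus_{\xi\in\XX}V_\xi$ together with a grading-preserving nilpotent endomorphism $N:V\to V$. I define $\Te$ by sending this datum to the $\ZZ$-representation in which $1\in\ZZ$ acts as $s\po\exp(N)$, with $s$ acting on $V_\xi$ by the scalar $\al(\xi)$. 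Properties (1) and (2) of the statement are then immediate from the definition.

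For $\Te$ to be a $C$-linear tensor equivalence, fullness and faithfulness use the functoriality of the Jordan decomposition: a $C$-linear $\ph:V\to V'$ commutes with $g=su$ if and only if it commutes with $s$ and $u$ separately; commutation with $s$ is preservation of the $\al$-eigenspace decomposition (hence of the $\XX$-grading) and commutation with $u$ is equivalent to commutation with $N=\log u$. Essential surjectivity is the direct construction: Jordan-decompose $\rho(1)=su$, transport the $s$-eigenspace decomposition to an $\XX$-grading via $\al^{-1}:C^\ti\to\XX$, and set $N=\log u$. The tensor compatibility rests on two identities: $\al(\xi+\eta)=\al(\xi)\al(\eta)$, since $\al$ is a group homomorphism (handling the semisimple part), and $\log(u\ot u')=N\ot 1+1\ot N'$ for commuting unipotents (handling the nilpotent part); the unit object is preserved tautologically.

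To prove the affine-envelope property, I would first verify that $(f,\iota)(\ZZ)$ is Zariski-dense in $\g Z$. The $\GG_a$-component, being $\ZZ\subset\af^1$, is plainly dense; for the $\mm{Diag}(\XX)$-component $\{\al^k:k\in\ZZ\}$, a Vandermonde argument shows that any finite sum $\sum c_\xi e_\xi\in C[\XX]$ vanishing on every $\al^k$ must be zero, because the values $\al(\xi)$ are pairwise distinct. Given then $\rho:\ZZ\to G(C)$ for algebraic $G$, choose a faithful embedding $G\hookrightarrow \GL(V)$; the equivalence $\Te$ produces a morphism $\wt\rho:\g Z\to\GL(V)$ with $\wt\rho(C)\circ(f,\iota)=\rho$, and by Zariski density the scheme-theoretic image of $\wt\rho$ equals the closure of $\rho(\ZZ)\subset G(C)$, which lies in $G$. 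Since $\g Z$ is reduced, $\wt\rho$ factors through $G$; Zariski density also yields uniqueness. The hardest part of this plan will be the coherence of the tensor structure on $\Te$—verifying the associativity and symmetry constraints in detail; this is best streamlined by appealing to the tensor equivalences already established in Section \ref{25.06.2021--1}, so that $\Te$ factors through structures whose coherence has been checked.
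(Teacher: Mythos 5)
Your proposal takes a genuinely different route from the paper. The paper verifies the universal property of the affine envelope directly by reducing to the diagonalizable and unipotent cases: given $\rho:\ZZ\to G(C)$ with $G$ algebraic, it passes to the Zariski closure of $\mm{Im}(\rho)$ (commutative, hence a product of a diagonalizable and a unipotent group scheme) and treats each factor separately---the diagonalizable case by dualizing the composite $\La\to C^\ti\to\XX$, the unipotent case via Waterhouse's results on one-parameter subgroups---after which $\Te$ is a formal consequence (restriction along $(f,\iota)$). You instead build $\Te$ explicitly via Jordan decomposition and then derive the envelope property from Zariski density; this is more concrete but leans on the same structure theory under the hood. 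Incidentally, your worry about tensor coherence is unfounded: as you define it, $\Te$ is \emph{strict} monoidal---the underlying space is unchanged, and the identities $\al(\xi+\eta)=\al(\xi)\al(\eta)$ together with $\exp(N\ot1+1\ot N')=\exp(N)\ot\exp(N')$ show that the $\ZZ$-actions on $\Te(V\ot V')$ and $\Te(V)\ot\Te(V')$ literally coincide, so the structure maps are identities and there is nothing to check.

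There is, however, a real gap in the density step. Showing that each projection, $\{\al^k\}\subset\mm{Diag}(\XX)(C)$ and $\ZZ\subset\GG_a(C)$, is Zariski-dense in its factor does \emph{not} imply that the graph $\{(f(k),k):k\in\ZZ\}$ is dense in $\g Z=\mm{Diag}(\XX)\ti\GG_a$: the graph of $\id:\ZZ\to\ZZ$ has both projections dense in $\af^1$ yet lies in the diagonal of $\af^1\ti\af^1$. To close the gap, either compute directly---if $\sum_\xi e_\xi\ot p_\xi(t)\in C[\XX]\ot C[t]$ vanishes on the graph then $\sum_\xi\al(\xi)^kp_\xi(k)=0$ for all $k$, and linear independence of the sequences $k\mapsto\al(\xi)^kk^j$ (a generalized Vandermonde, or linear-recurrence, argument) forces every $p_\xi=0$---or argue structurally: the closure of $(f,\iota)(\ZZ)$ is a closed subgroup scheme $H\subset\g Z$, and since $\hh{}{\mm{Diag}(\XX)}{\GG_a}=0=\hh{}{\GG_a}{\mm{Diag}(\XX)}$, a Goursat-type argument gives $H=H'\ti H''$ with $H'\subset\mm{Diag}(\XX)$, $H''\subset\GG_a$; the density of the two projections you did establish then forces $H=\g Z$.
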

\begin{proof}Let $\La$ be an abelian group, $h:\ZZ\to  \mm{Diag}(\La)(C)$ a morphism and   $h_1:C[\La]\to C$  the image of $1\in\ZZ$ under $h$. The morphism of abstract groups 
\[
\La\arou {h_1} C^\ti\arou {\al^{-1}}\XX
\]
gives us a   morphism of group schemes $h^\natural:\mm{Diag}(\XX)\to\mm{Diag}(\La)$.
Clearly   \[h^\natural(C)\circ f =h.\]

Let now $U$ be an algebraic unipotent group scheme and 
$h:\ZZ\to U(C)$ a morphism of abstract groups. Using Theorem 8.3 and Exercise 11 of Chapter 9 from \cite{waterhouse79} plus the fact that $\GG_a$ has no non-trivial subgroup schemes,  
there exists a morphism   $g: \GG_a\to U$ such that $g(1)=h(1)$. This of course just means that $g\iota=h$. 

Let now $G$ be any   algebraic group scheme and $\rho:\ZZ\to G(C)$ a morphism. The closure of $\mm{Im}(\rho)$  is   an abelian group scheme \cite[4.3, Theorem]{waterhouse79} and  as such  can be decomposed into a diagonalizable  and a unipotent part \cite[Theorem 9.5]{waterhouse79}. The previous claims then establish what we want.
\end{proof}
Considering an inverse tensor equivalence to $\Te$  \cite[II.4.4]{saavedra72} and employing  Corollary \ref{22.04.2020--1}, we arrive at:

\begin{cor}\label{22.04.2020--2} The $C$-linear abelian tensor category $\mcrs$ is equivalent to $\rep C\ZZ$. More precisely, following Lemma \ref{25.05.2020--3}, let us fix an isomorphism  
\[
\al: \XX \arou\sim C^\ti .
\]
Then,  there exists an equivalence of tensor $C$-linear categories  
\[
\Ps_\al: \rep C\ZZ\aro \mcrs
\]
having the following properties: 
\begin{enumerate}[(1)]
\item If $L\in\rep C\ZZ$ has dimension one and is defined by letting $1\in\ZZ$ act as  $\la\in C^\ti$,  then   $\Ps_\al(L)$ belongs to the class $\al^{-1}(\la)$.
\item If $V\in\rep C\ZZ$ is defined by the unipotent automorphism $u:V\to V$, then 
\[
\Ps_\al(V)\simeq \ga eul\left(V,\log(u)\right).
\]
 \qed
\end{enumerate}
\end{cor}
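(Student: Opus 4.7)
The plan is to obtain $\Ps_\al$ as the composition $\Ph\circ\Te^{-1}$, where $\Ph:\rep C{\g Z}\to\mcrs$ is the equivalence of Corollary \ref{22.04.2020--1} and $\Te^{-1}:\rep C\ZZ\to\rep C{\g Z}$ is a quasi-inverse of the tensor equivalence $\Te$ produced in Lemma \ref{25.05.2020--5}. Both $\Ph$ and $\Te$ are equivalences of $C$-linear abelian tensor categories, and by \cite[II.4.4]{saavedra72} a quasi-inverse of a tensor equivalence carries canonically a structure of tensor functor, so the composition $\Ps_\al:=\Ph\circ\Te^{-1}$ is itself an equivalence of $C$-linear abelian tensor categories. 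This takes care of the abstract part of the statement.

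It remains to verify the two concrete properties. First fix an isomorphism $\al:\XX\stackrel\sim\to C^\ti$ (which exists by Lemma \ref{25.05.2020--3}) and use it to produce $\Te$ as in Lemma \ref{25.05.2020--5}. For property (1), let $L\in\rep C\ZZ$ be one-dimensional with $1\in\ZZ$ acting as $\la\in C^\ti$. Setting $\xi=\al^{-1}(\la)\in\XX$ and denoting by $\chi:\g Z\to\GG_m$ the character induced by $\xi$ via the first projection, Lemma \ref{25.05.2020--5}(1) gives $\Te(\chi)\simeq L$, hence $\Te^{-1}(L)\simeq\chi$. Corollary \ref{22.04.2020--1}(1) then shows that $\Ps_\al(L)=\Ph(\chi)$ lies in the isomorphism class $\xi=\al^{-1}(\la)$, as required. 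For property (2), let $V\in\rep C\ZZ$ be given by a unipotent automorphism $u:V\to V$ and let $\rho:\GG_a\to\GL(V)$ be the (unique) algebraic representation such that $\rho(1)=u$; such $\rho$ exists by the universal property of $\GG_a$ as the unipotent envelope of $\ZZ$, implicitly established in the proof of Lemma \ref{25.05.2020--5}. Let $\si:\g Z\to\GL(V)$ be the composition of $\rho$ with the second projection. Then Lemma \ref{25.05.2020--5}(2) gives $\Te(\si)\simeq V$, whence $\Te^{-1}(V)\simeq\si$, and Corollary \ref{22.04.2020--1}(2) yields
\[
\Ps_\al(V)=\Ph(\si)\simeq\ga eul\bigl(V,\log\rho(1)\bigr)=\ga eul\bigl(V,\log u\bigr).
\]

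The only non-routine point is ensuring that the quasi-inverse $\Te^{-1}$ can be chosen as a \emph{tensor} functor, so that the composition $\Ps_\al$ is genuinely a tensor equivalence rather than merely an equivalence of abelian categories; this is exactly what \cite[II.4.4]{saavedra72} provides. Everything else is a bookkeeping traversal of the chain of equivalences built in the previous sections.
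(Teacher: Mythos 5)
Your proposal is correct and takes essentially the same approach as the paper: the paper's (terse) proof is precisely ``Considering an inverse tensor equivalence to $\Te$ \cite[II.4.4]{saavedra72} and employing Corollary \ref{22.04.2020--1}'', i.e.\ setting $\Ps_\al=\Ph\circ\Te^{-1}$, and your verification of properties (1) and (2) just spells out the bookkeeping that the paper leaves implicit.
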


For the sake of readability, let us state Corollary \ref{22.04.2020--2} ``in the other direction'' and in the   case   where  $C$ is the field of complex numbers, and 
\[
\al(\text{class of $\ga eul(\CC,\la)$}) = e^{2\pi {\rm i}\la}.
\] 
(Consequently, if $\eta_\la:\ZZ\to\CC^\ti$ is defined by $k\mapsto e^{2\pi\mm ik\la}$, then $\Ps_\al(\eta_\la)\simeq\ga eul(\CC,\la)$.)

\begin{cor}\label{16.07.2020--2}Let $\al$ be defined as before. Then, there exists an equivalence of $\CC$-linear tensor categories 
\[
\Om_\al:\mcrs \arou\sim \rep \CC\ZZ
\]
such that to each endomorphism $A:V\to V$, we have 
\[\Om_\al(\ga eul(V,A))=
\begin{array}{c} \text{the representation of $\ZZ$ on $V$}\\  \text{defined by  $k\mapsto e^{2\pi k \mm iA}$.}
\end{array} 
\]
In other words, ``the exponential'' is an inverse to $\Ps_\al$.
\end{cor}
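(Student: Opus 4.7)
The strategy is to take $\Om_\al$ to be a chosen quasi-inverse of the equivalence $\Ps_\al$ established in Corollary \ref{22.04.2020--2}; such a quasi-inverse exists and is automatically a $\CC$-linear equivalence of abelian tensor categories. The substantive content of the statement is then the explicit formula for $\Om_\al$ on Euler connections, which I will verify by reducing to the semisimple and unipotent cases via Jordan decomposition.

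Given $A:V\to V$, write $A = A_s + A_n$ with $A_s$ semisimple, $A_n$ nilpotent, and $[A_s, A_n] = 0$. Let $V = \bigoplus_\la V_\la$ be the generalized eigenspace decomposition for $A_s$, so that $A|_{V_\la} = \la\cdot\mm{id} + N_\la$, where $N_\la := A_n|_{V_\la}$ is nilpotent. The tensor structure of the functor $eul$ (Section \ref{11.09.2020--1}) gives an isomorphism
\[
\ga eul(V, A) \simeq \bigoplus_\la \Big(\ga eul(\CC, \la)\ot\ga eul(V_\la, N_\la)\Big).
\]
Since $\Om_\al$ is a $\CC$-linear tensor functor commuting with direct sums, it suffices to compute its effect on each tensor factor.

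For the one-dimensional factors, Corollary \ref{22.04.2020--2}(1) together with the specific normalization $\al(\text{class of }\ga eul(\CC, \mu)) = e^{2\pi i\mu}$ gives that $\Om_\al(\ga eul(\CC, \la))$ is the $\ZZ$-representation on $\CC$ with $1$ acting as $e^{2\pi i \la}$. For the unipotent factors, Corollary \ref{22.04.2020--2}(2) gives that $\Om_\al(\ga eul(V_\la, N_\la))$ is the $\ZZ$-representation with $1$ acting as the unipotent automorphism $\exp(2\pi i N_\la)$. Combining via the tensor product of $\ZZ$-representations, the generator $1\in\ZZ$ acts on $V_\la$ by $e^{2\pi i\la}\cdot \exp(2\pi i N_\la) = \exp\bigl(2\pi i\, A|_{V_\la}\bigr)$, where the equality uses that $\la\,\mm{id}$ commutes with $N_\la$. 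Summing over the generalized eigenspaces, the representation $\Om_\al(\ga eul(V, A))$ has $k \in \ZZ$ acting by $\exp(2\pi i k A)$, as claimed.

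The delicate point to monitor is the bookkeeping of normalizations: the factor $2\pi i$ in the semisimple piece comes from the choice of $\al$ in the $C = \CC$ setting, while its appearance in the unipotent piece requires interpreting Corollary \ref{22.04.2020--2}(2) with the matching normalization, so that the two contributions combine multiplicatively on each generalized eigenspace into the single exponential $\exp(2\pi i A)$. Once this compatibility is checked, essential surjectivity and full faithfulness of $\Om_\al$ come for free from its being a quasi-inverse of an equivalence, and the last sentence (``the exponential is an inverse to $\Ps_\al$'') follows by reading the formula in reverse.
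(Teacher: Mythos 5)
Your proposal is correct and takes essentially the same approach as the paper: define $\Om_\al$ as a chosen quasi-inverse of the equivalence $\Ps_\al$ (automatically a tensor equivalence by \cite[II.4.4]{saavedra72}), then verify the explicit formula by Jordan-decomposing $A$, invoking the tensor compatibility of $eul$, and applying Corollary~\ref{22.04.2020--2} separately to the rank-one and unipotent factors. The only organizational difference is that you work directly along the generalized eigenspace decomposition $V=\bigoplus_\la V_\la$, whereas the paper first reduces to indecomposable $(V,A)$ (so that $A=\la I+N$) and then handles general $(V,A)$ via direct sums; both amount to the same tensor factorization $(V_\la, A|_{V_\la})\simeq(\CC,\la)\ot(V_\la, N_\la)$. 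As for the normalization issue you flag: read literally, Corollary~\ref{22.04.2020--2}(2) sends the representation with $1\mapsto e^{2\pi\mm{i} N_\la}$ under $\Ps_\al$ to $\ga eul\bigl(V_\la,\,2\pi\mm{i} N_\la\bigr)$, not to $\ga eul(V_\la, N_\la)$; the missing step is that $N_\la$ and $2\pi\mm{i} N_\la$ are conjugate nilpotent endomorphisms (identical Jordan type), whence $\ga eul(V_\la, 2\pi\mm{i} N_\la)\simeq\ga eul(V_\la, N_\la)$. The paper's own proof carries this step silently too, so this is not a gap peculiar to your argument---but it would be worth spelling out rather than leaving it as a ``compatibility to be checked.''
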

\begin{proof} We  construct $\Om_\al$ as an inverse equivalence to $\Ps_\al$ following the proof of Theorem 1 in \cite[IV.4]{maclane98}. That this inverse equivalence is {\it automatically} a tensor functor is verified by the considerations in \cite[II.4.4]{saavedra72}. 

In what follows, for a given $(V,A)\in\bb{End}$, we shall write $\eta_A$ to mean the representation $k\mapsto e^{2\pi k\mm i A}$ of $\ZZ$.
Let $(V,A)\in\bb{End}$ be given.  We are required to show that   $\Ps_\al(\eta_A)\simeq\ga eul(V,A)$. Assume firstly that $(V,A)$ is indecomposable as an object of $\bb{End}$; this implies in particular that $A$ has a single eigenvalue $\la$. This being so, $A=\la I+N$, with $N$ nilpotent. Then, $e^{2\pi \mm iA}=e^{2\pi \mm i\la}e^{2\pi \mm iN}$ which shows that   $\eta_A\simeq\eta_\la \ot \eta_{N}$. Then, by Corollary \ref{22.04.2020--2},
\[
\Ps_\al(\eta_A)\simeq \underbrace{\Ps_\al(\eta_\la)}_{\simeq\ga eul(\CC,\la)}\ot\underbrace{\Ps_\al(\eta_{N})}_{\simeq\ga eul (V,N)}
\] 
and we conclude that 
\[
\Ps_\al(\eta_A)\simeq \ga eul(V, \la I+N).
\]
The case in which $(V,A)$ is not indecomposable is treated by considering a decomposition into direct sums and we conclude that $\Ps_\al(\eta_A)\simeq\ga eul(V,A)$, as wanted.
\end{proof}

\begin{rmk}It is possible, if the ground field is $\CC$, to obtain Corollary \ref{22.04.2020--2} using the universal Picard-Vessiot extension \cite[10.2, 262ff]{van_der_put-singer03}.  
\end{rmk}

\section{Connections on $\PP\smallsetminus\{0,\infty\}$ after \cite[15.28--36]{deligne87}}\label{01.07.2020--5}

The theory of regular-singular connections over the ring $C[x^\pm]=C[x,x^{-1}]$ works in close analogy with that of $\lau Cx$.  In this section we review it following Deligne. 

Let $\PP$ stand for the projective line obtained by gluing \[\af_0:=\mm{Spec}\,C[x]\quad\text{ and}\quad \af_\infty:=\mm{Spec}\,C[y]
\] 
along 
the open subsets $\mm{Spec}\,C[x^\pm]$ and $\mm{Spec}\,C[y^\pm]$ via the isomorphism $x=y^{-1}$. As suggested by notation, $0\in\PP$ is the point $(x)$ of $\af_0$ and $\infty\in\PP$ the point $(y)$ of $\af_\infty$. Note that    $\vt:C[x]\to C[x]$   can be extended to a global  section of the tangent sheaf, call it also $\vt$, on  $\PP$.

\begin{dfn}\label{01.07.2021--1}\begin{enumerate}[(1)]\item We let $\mc(C[x^\pm]/C)$ be the category whose \begin{enumerate}\item[\it objects] are couples $(M,\na)$ consisting of a $C[x^\pm]$--module of finite type and a $C$--linear endomorphism $\na:M\to M$ satisfying Leibniz's rule $\na(fm)=\vt(f)m+f\na(m)$; 
\item[\it arrows] between $(M,\na)$ and $(M',\na')$ are just $C[x^\pm]$--linear maps $\ph:M\to M'$ satisfying $\na'\ph=\ph\na$.  
\end{enumerate} 
It is called the category of {\it connections on $\PP\smallsetminus\{0,\infty\}$  or on $C[x^\pm]$}.

\item We let $\mclog(\PP/C)$ be the category whose  
\begin{enumerate}\item[\it objects] are couples $(\cm,\na)$ consisting of a coherent $\co_\PP$--module and a $C$--linear endomorphism $\na:\cm\to \cm$ satisfying Leibniz's rule $\na(fm)=\vt(f)m+f\na(m)$ on all open subsets; 
\item[\it arrows] between $(\cm,\na)$ and $(\cm',\na')$ are   $\co_\PP$--linear maps $\ph:\cm\to \cm'$ satisfying $\na'\ph=\ph\na$.  
\end{enumerate} 
It is called the category of {\it logarithmic connections on $\PP$}. 

\item 
We let
\[
\ga_\PP: \mclog(\PP/C)\aro\mc(C[x^\pm]/C)
\]
be the obvious functor. (If convenient we shall write simply $\ga$.) A connection $(M,\na)$ in $\mc(C[x^\pm]/C)$ is  {\it regular-singular} if $\ga_\PP(\cm)\simeq M$ for a certain $\cm\in\mclog(\PP/C)$; in this case, any such $\cm$ is a  {\it logarithmic model} of $M$. In case $\cm$ is in addition a locally free $\co_\PP$-module, we call $\cm$ a  {\it logarithmic lattice}. 

\item 
The full subcategory of $\mc(C[x^\pm]/C)$ having regular-singular connections as objects   is denoted by $\mcrs(C[x^\pm]/C)$.
\end{enumerate}
\end{dfn}

\begin{rmk}\label{15.10.2020--1}
A fundamental result concerning an object $(M,\na)$ from $\mc(C[x^\pm]/C)$ is that $M$ is automatically a {\it free} $C[x^\pm]$--module. (That it is a projective module can be found in \cite[Proposition 8.9]{katz70}, for instance, but we offer a short proof of a more general  fact in Remark \ref{09.09.2020--1} below.) In addition, proceeding as discussed after Definition \ref{15.04.2020--1}, we can always assure the existence of  {\it logarithmic lattices} for objects in $\mcrs(C[x^\pm]/C)$. 
\end{rmk}

\begin{ex}\label{01.07.2020--4}Let $(V,A)\in\bb{End}$ (see Definition \ref{12.06.2020--5}). We let  $eul_\PP(V,A)\in \mclog(\PP/C)$ be the couple $(\co_\PP\ot_CV ,D_A)$, where $D_A(f\ot v)=\vt f\ot v+f\ot Av$ on any open subset of $\PP$. This construction gives rise to a  functor $eul_\PP:\bb{End}\to\mcrs(C[x^\pm]/C)$.
\end{ex}

The canonical inclusions $C[x^\pm]\to\lau Cx$ and $C[x]\to\pos Cx$
produce $C$-linear  exact tensor functors 
\[{\bf r}_0: \mclog(\PP/C)\aro\mclog(\pos Cx/C),\quad \cm\longmapsto \pos Cx\otu {C[x]}\cm(\af_0)
\]
and 
\[
{\bf r}_0:\mc(C[x^\pm]/C)\aro\mc(\lau Cx/C),\quad M\longmapsto \lau Cx\otu {{C[x^\pm]}}M.
\]
It should be noted that  if $eul_\PP(V,A)$ is as in 
Example \ref{01.07.2020--4}, then 
${\bf r}_0( eul_\PP(V,A))$ is simply $eul(V,A)$, as in Definition \ref{12.06.2020--1}. 

In entirely analogous fashion, we have functors ``${\bf r}_\infty$'' with target $\mclog(\pos Cy/C)$ and $\mc(\lau Cy/C)$. Note, on the other hand, that ${\bf r_\infty}(eul_\PP(V,A))$ then corresponds to $eul(V,-A)$ as $\vt(y)=-y$.

The relation between $\mcrs(C[x^\pm]/C)$ and  $\mcrs(\lau Cx/C)$ is given by

\begin{thm}\label{12.06.2020--2}The functor ${\bf r}_0$ induces an  equivalence between categories of regular-singular connections. 
\end{thm}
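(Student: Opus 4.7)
The plan is to bootstrap the result from Proposition \ref{09.06.2020--1} (which, as indicated in the introduction, asserts that every regular-singular connection on $\PP\smallsetminus\{0,\infty\}$ is an Euler connection $\ga_\PP eul_\PP(V,A)$). Granting that input, essential surjectivity and fully faithfulness of $\mathbf{r}_0$ both reduce to computing $\hh{}{eul_\PP(V,A)}{eul_\PP(W,B)}$ explicitly and comparing it with its local counterpart, where our earlier analysis of Euler connections in $\mclog(\pos Cx/C)$ is available.

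\emph{Essential surjectivity.} Given $M\in\mcrs(\lau Cx/C)$, apply Corollary \ref{06.07.2020--3} to choose $(V,A)\in\bb{End}$ with $\mm{Sp}_A\subset\tau$ and $M\simeq \ga eul(V,A)$. Since $\mathbf{r}_0(\ga_\PP eul_\PP(V,A))=\ga eul(V,A)$ by the very definition of $eul_\PP$ and $\mathbf{r}_0$, the object $\ga_\PP eul_\PP(V,A)\in\mcrs(C[x^\pm]/C)$ maps to (something isomorphic to) $M$.

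\emph{Fully faithfulness.} By Proposition \ref{09.06.2020--1} every object of $\mcrs(C[x^\pm]/C)$ is isomorphic to some $\ga_\PP eul_\PP(V,A)$, and by the shearing construction on $\PP$ (an exact analogue of Theorem \ref{02.03.2020--3}) we may arrange $\mm{Sp}_A\subset\tau$. Hence it suffices to verify that, for $(V,A),(W,B)\in\bb{End}$ with spectra in $\tau$, the natural arrow
\[
\hh{\mcrs(C[x^\pm]/C)}{\ga_\PP eul_\PP(V,A)}{\ga_\PP eul_\PP(W,B)}
\aro
\hh{\mcrs(\lau Cx/C)}{\ga eul(V,A)}{\ga eul(W,B)}
\]
is a bijection. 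Both sides can be computed as horizontal sections of the internal $\mathrm{Hom}$ connection $eul_\PP(\hh C V W,H_{A,B})$ and $eul(\hh C V W,H_{A,B})$, respectively. The right-hand side equals $\hh{\bb{End}}{(V,A)}{(W,B)}$ by Lemma \ref{10.03.2020--1}(2) combined with Proposition \ref{09.04.2019--1}(3) (here one uses that $\mm{Sp}_{H_{A,B}}\subset \mm{Sp}_B\ominus\mm{Sp}_A\subset\tau\ominus\tau$ meets $\ZZ$ only in $\{0\}$, since $\tau\to C/\ZZ$ is injective). For the left-hand side, an arrow is a Laurent expansion $\sum_{n\in\ZZ}x^n\otimes T_n\in C[x^\pm]\otimes_C\hh C V W$ satisfying $nT_n+H_{A,B}(T_n)=0$ for each $n$; the same spectral obstruction forces $T_n=0$ for $n\neq 0$, and $T_0\in\ker H_{A,B}=\hh{\bb{End}}{(V,A)}{(W,B)}$. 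Both Hom sets therefore identify canonically with $\hh{\bb{End}}{(V,A)}{(W,B)}$, and the restriction map is the identity under this identification.

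\emph{Where the real work lies.} The non-trivial content is entirely packed into Proposition \ref{09.06.2020--1}, which is what makes the global category of regular-singular connections on $\PP\smallsetminus\{0,\infty\}$ collapse onto Euler connections; this is the step that follows Deligne's arguments in \cite[15.28--36]{deligne87} and is where one must genuinely use properness of $\PP$ (comparing the exponents at $0$ and at $\infty$ and pruning the polar part of any arrow between logarithmic lattices on $\PP$). Once that structural statement is in hand, the rest of the argument, as above, is purely formal and parallels the local computations already performed in Section \ref{11.09.2020--1} and Section \ref{08.04.2020--3}.
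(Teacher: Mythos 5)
Your proof is correct and follows essentially the same route as the paper: essential surjectivity from Corollary \ref{06.07.2020--3} together with the compatibility $\mathbf{r}_0\circ eul_\PP = eul$, and fullness (plus faithfulness) by reducing to Euler connections via Proposition \ref{09.06.2020--1} and then matching the two Hom spaces with $\hh{\bb{End}}{(V,A)}{(W,B)}$ via the spectral condition on $H_{A,B}$ (the paper splits off faithfulness separately via freeness of the underlying $C[x^\pm]$-module, but the outcome is the same). One small correction: the paper has no shearing construction on $\PP$; the normalization $\mm{Sp}_A\subset\tau$ comes directly from the proof of Proposition \ref{09.06.2020--1}, since the representative set $S$ there can be taken inside $\tau$ because $\mm{Sp}(\nabla)$ is $\ZZ$-stable.
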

In \cite[15.28--36]{deligne87}, Deligne offers a proof of this result by constructing an inverse functor and in studying       loc.cit., we obtained the following sequence of thoughts.  (Which is possibly not exactly what Deligne had in mind.) 

\begin{prp}[Regular singular connections are ``Euler'']\label{09.06.2020--1}The functor $\ga eul:\bb{End}\to\mcrs(C[x^\pm]/C)$ is essentially surjective. More precisely, given $(M,\na)\in\mcrs(C[x^\pm]/C)$, there exists $(\g M,A)\in\bb{End}$ and an isomorphism $\ga eul_\PP(\g M,A)\simeq (M,\na)$. In addition, $A$ can be chosen to have no two distinct eigenvalues differing by a positive integer. 
\end{prp}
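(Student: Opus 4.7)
I plan to construct a logarithmic model of $M$ on $\PP$ that is a \emph{trivial} $\co_\PP$--module $\co_\PP\otimes_C\g M$. Given such a model, the derivation $\na$ preserves the space of global sections $\g M=H^0(\PP,\co_\PP\otimes\g M)$ and restricts to an endomorphism $A\in\mm{End}(\g M)$; one then has $(\co_\PP\otimes\g M,\na)\simeq eul_\PP(\g M,A)$, hence $M\simeq\ga eul_\PP(\g M,A)$.

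I begin by fixing the formal data. Corollary \ref{06.07.2020--3} applied to ${\bf r}_0M$ produces $(\g M,A)\in\bb{End}$ with eigenvalues of $A$ in $\tau$, hence---after choosing $\tau$ so that $(\tau\ominus\tau)\cap\ZZ=\{0\}$---satisfying the required spectral condition, together with an iso ${\bf r}_0M\simeq\ga eul(\g M,A)$. The candidate model is $\cn:=\co_\PP\otimes_C\g M$, whose residues at $0$ and $\infty$ are $A$ and $-A$ (the sign flip arising from $\vt_y=-\vt$), with eigenvalues in $\tau$ and $-\tau$ respectively.

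Next, picking any logarithmic model $\cm$ of $M$ on $\PP$ (which exists by hypothesis and is locally free, cf.\ Remark \ref{15.10.2020--1}), I apply shearings at $0$ and at $\infty$ as in Theorem \ref{02.03.2020--3}, performed globally on $\cm$, to normalise its residues so that their eigenvalues are in $\tau$ at $0$ and in $-\tau$ at $\infty$---matching those of $\cn$. Both $\cm|_{\af_0}$ and $\cm|_{\af_\infty}$ are free over the PIDs $C[x]$ and $C[y]$; choose trivialisations $\cm|_{\af_0}\simeq C[x]^r$ and $\cm|_{\af_\infty}\simeq C[y]^r$ with polynomial connection matrices $B(x)$ and $B'(y)$ satisfying $B(0)=A$ and $B'(0)=-A$. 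Denote by $T\in\GL_r(C[x^\pm])$ the transition matrix between the two trivialisations; the integrability of $\na$ gives the identity $\vt T+BT+TB'=0$.

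The main obstacle is the ensuing \emph{rigidity argument}, showing that $T$ must be a constant matrix in $\GL_r(C)$ commuting with $A$. Writing $T=\sum_k T_kx^k$ with finite support, the identity reduces mode by mode to
\[
(m+\mm{ad}_A)T_m=-\sum_{l\ge 1}b_lT_{m-l}-\sum_{j\ge 1}T_{m+j}b'_j,
\]
with $B(x)-A=\sum_{l\ge 1}b_lx^l$ and $B'(y)+A=\sum_{j\ge 1}b'_jy^j$. Since the eigenvalues of $\mm{ad}_A$ lie in $\tau\ominus\tau$ and thus avoid nonzero integers, the operator $m+\mm{ad}_A$ is invertible for every $m\ne 0$. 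A careful analysis of the coupled system---in particular treating the extremes of the Laurent support of $T$ and propagating the vanishing inward---yields $T_m=0$ for all $m\ne 0$, so that $T=T_0\in\GL_r(C)$ with $[A,T_0]=0$. The two affine trivialisations then glue to an isomorphism $\cm\simeq\cn=\co_\PP\otimes\g M$ in $\mclog(\PP/C)$, and the desired global iso $(\cm,\na)\simeq eul_\PP(\g M,A)$ follows from matching residues at $0$; the spectral property of $A$ is inherited from the first step.
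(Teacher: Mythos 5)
Your strategy — realize $M$ as $\ga_\PP$ of a trivial $\co_\PP$-module with constant connection matrix by trivializing a logarithmic lattice on both charts and arguing that the transition matrix is forced to be constant — is reasonable in spirit, but the ``rigidity argument'' as you state it is false, and I do not see how to repair it without changing the approach.

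The problem is with the claim that the coupled system
\[
(m+\mm{ad}_A)T_m = -\sum_{l\ge 1}b_lT_{m-l}-\sum_{j\ge 1}T_{m+j}b'_j
\]
``yields $T_m=0$ for all $m\ne0$'' by ``treating the extremes of the Laurent support and propagating the vanishing inward.'' This is a two-sided recursion with a nonzero source at every mode: already for $m=-1$ the right-hand side contains the term $b'_1T_0$, and $T_0$ cannot be zero. There is nothing forcing the right-hand side to vanish at the extremes either, since at $m=k_{\max}$ the sum $\sum_l b_l T_{m-l}$ involves the nonzero interior $T_k$'s. Concretely, take $\cm=\co_\PP^2$ with $\na$ the trivial connection ($A=0$, $B=0$); on $\af_\infty$ use the basis $f_1=e_1$, $f_2=ye_1+e_2$. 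Then $B'(y)=\left(\begin{smallmatrix}0&-y\\0&0\end{smallmatrix}\right)$, $B'(0)=0=A$, the residue normalizations you impose hold, the transition matrix is $T=\left(\begin{smallmatrix}1&*y\\0&1\end{smallmatrix}\right)$, it satisfies your integrability identity — and it is not constant. So the normalization ``$B(0)=A$, $B'(0)=\pm A$'' does not single out a trivialization whose transition matrix is constant; your argument would need to \emph{construct} one good choice, not deduce constancy from an arbitrary one. (There are also persistent sign confusions between the formal residue at $\infty$, which is $-B'(0)$, and the $\vt$-connection matrix $B'(0)$ itself; in one convention you even land on $L_A+R_A$ rather than $\mm{ad}_A$, whose spectrum $\tau+\tau$ can contain nonzero integers.)

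The paper proves the proposition by a quite different route: it studies the spectral decomposition of the operator $\na$ on the (infinite-dimensional) $C$-space $M$ itself, using the filtration $\cm^{(k)}=x^k\cm$ to show that $M=\bigoplus_\vr\bb G(\na,\vr)$ with each generalized eigenspace finite-dimensional, and that multiplication by $x^k$ shifts $\bb G(\na,\vr)$ onto $\bb G(\na,\vr+k)$. Picking a set $S$ of representatives mod $\ZZ$ and setting $\g M=\bigoplus_{\vr\in S}\bb G(\na,\vr)$, one gets $M=C[x^\pm]\ot_C\g M$ and $A=\na|_{\g M}$ directly, with no transition matrix to analyze. That argument sidesteps exactly the obstacle your proposal runs into.
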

\begin{proof}
This is mostly spectral theory of the connection operator. Let    $(M,\na)\in\mcrs(C[x^\pm]/C)$ be given.
There exists a  {\it finite} $C[x]$--submodule $\cm$ of $M$ which is invariant under $\na$ and generates $M$ as a $C[x^\pm]$-module. Note that $\cm$ is necessarily free. For each $k\in\ZZ$, we define $\cm^{(k)} = x^k\cm$ to obtain  a {\it decreasing}, {\it separated}  and {\it exhaustive} filtration of $M$.

Given $k<\ell$, let us write $\cm^{(k,\ell)}$ for the quotient $\cm^{(k)}/\cm^{(\ell)}$ (this is a finite dimensional $C$--space) and $\na_{k,\ell}$ for the $C$--linear map induced by $\na$ on it. 
Since multiplication by $x^k$ induces an isomorphism of $C$-spaces $\cm^{(0,1)}\simeq \cm^{(k,k+1)}$, we can show that 
\[
\mm{Sp}(\na_{k,k+1}) = \{k\}\op\mm{Sp}(\na_{0,1}).
\]
From the exact sequence 
\[
0\aro\cm^{(k+1,k+2)}\aro\cm^{(k,k+2)}\aro\cm^{(k,k+1)}\aro0
\]
we derive,  
\[
\begin{split}
\mm{Sp}(\na_{k,k+2}) &= \mm{Sp}(\na_{k+1,k+2})\cup\mm{Sp}(\na_{k,k+1})
\\&=\left(\mm{Sp}(\na_{0,1})\op\{k+1\}\right)\cup\left(\mm{Sp}(\na_{0,1})\op\{k\}\right),
\end{split}
\]
which in all generality gives
\[\begin{split}
\mm{Sp}(\na_{k,\ell})& = \mm{Sp}(\na_{k,k+1})\cup\cdots\cup\mm{Sp}(\na_{\ell-1,\ell})
\\
&=\bigcup_{j=k}^{\ell-1} \mm{Sp}(\na_{0,1})\op \{j\}.
\end{split}
\] 
We now require:

\begin{lem}\label{12.06.2020--3}
\begin{enumerate}
\item The spectral set $\mm{Sp}(\na)$ is contained in $\bigcup_{k\in\ZZ} \{k\}\op \mm{Sp}(\na_{0,1})$ and is invariant under the action of $\ZZ$ on $C$.

\item Let $\vr\in\mm{Sp}(\na)$. Then, there exists a couple of integers $k<\ell$ such that $\bb G(\na,\vr)\subset\cm^{(k)}$ and $\bb G(\na,\vr)\cap\cm^{(\ell)}=(0)$. In particular, $\dim \bb G(\na,\vr)<\infty$.
\end{enumerate} 
\end{lem}
\begin{proof}
(1) Let $\vr\in\mm{Sp}(\na)$ and let $m\in M$ be an eigenvector. Let $k\in\ZZ$ be such that $m\in \cm^{(k)}\smallsetminus \cm^{(k+1)}$. Then,  \[\vr\in\mm{Sp}(\na_{k,k+1})=\mm{Sp}(\na_{0,1})\op\{k\}.
\] 
This shows the inclusion. The final statement is a consequence of the fact  that if $m$ is an eigenvector for the eigenvalue $\vr$, then $x^km$ is an eigenvector for $k+\vr$. 

(2) Consider $I_\vr$ the set of all $k\in\ZZ$ such that $\vr\in\mm{Sp}(\na_{0,1}) \op\{k\}$. Clearly $I_\vr$ is finite; let $\mu=\min I_\vr$ and $\nu=\max I_\vr$. 
For a given $m\in\bb G(\na,\vr)\smallsetminus\{0\}$, there exists  $k\in\ZZ$ such that   $m\subset  \cm^{(k)}\smallsetminus\cm^{(k+1)}$. Hence, $\vr\in\mm{Sp}(\na_{k,k+1})$ so that $k\in I_\vr$. This implies that 
\[
\mu\le k\le\nu.
\]
Hence,  $m\in \cm^{(\mu)}$ while $m\not\in\cm^{(\nu+1)}$.
 
\end{proof}

Finding a logarithmic lattice for $M$ and looking at the space of sections with poles on $0$ and $\infty$, we can construct an increasing and exhaustive filtration of $M$ by {\it finite dimensional } vector spaces  which is, in addition, stable under $\na$.
It then follows that 
\[
M=\bigoplus_{\vr\in \mm{Sp}(\na)}\bb G(\na,\vr). 
\]
Let us now select a finite set $S\subset \mm{Sp}(\na_{0,1})$ such that 
\[
\mm{Sp}(\na) = \bigsqcup_{k\in\ZZ} S\op\{k\}.
\]
Write 
\[
\g M=\bigoplus_{\vr\in S} \bb G(\vr,\na);
\]
this is a finite dimensional space because of Lemma \ref{12.06.2020--3}.
As multiplication by $x^k$ induces isomorphisms 
\[\bb G(\na,\vr)\arou\sim \bb G(\na,\vr+k), 
\]
we have 
\[
M=\bigoplus_{k\in\ZZ} x^k \g M=C[x^\pm]\ot_C\g M.
\]
Let $A:\g M\to\g M$ be the restriction of $\na$. We then see that 
\[
M\simeq \ga eul_\PP(\g M,A).
\] 
In addition, by construction, no two distinct elements of $\mm{Sp}(A)=S$ can differ by a non-zero integer. 
\end{proof}

\begin{proof}[Proof of Theorem \ref{12.06.2020--2}]  
We know that $\bb r_0$ is faithful
since the $C[x^\pm]$-module of any object in $\mc(C[x^\pm]/C)$ is free (Remark \ref{15.10.2020--1}). Essential surjectivity is an immediate consequence of Corollary \ref{06.07.2020--3} and Example \ref{01.07.2020--4}. 
We consider fullness. Let  $(M,\na)$ and $(M',\na')$ in $\mcrs(C[x^\pm]/C)$ be given.
Because of Proposition \ref{09.06.2020--1}, we may assume that \[(M,\na)=(\co_\PP\ot_CV,D_A)\quad\text{and}\quad(M',\na')=(\co_\PP\ot_CV',D_{A'})\] where $A:V\to V$ and $A':V'\to V'$ have no two distinct eigenvalues differing by an  integer. The result is then a consequence of the explicit determination of $\hh{}{eul(V,A)}{eul(V',A')}$ made in  Lemma   \ref{10.03.2020--1} and Proposition \ref{09.04.2019--1}.  
\end{proof}

Let us now express these findings using the notion of exponents. 

\begin{dfn}Let $\cm\in\mclog(\PP/C)$ be given. Define its set of  exponents, $\mm{Exp}(\cm)$, as \[\mm{Exp}(\cm)=\mm{Exp}(\bb r_0 \cm)\cup\mm{Exp}(\bb r_\infty\cm). \]
\end{dfn}

With this definition, we can fix certain preferred logarithmic models. 

\begin{thm}[Deligne-Manin models]\label{06.07.2020--6} Let $M\in\mcrs(C[x^\pm]/C)$. Then, there exists a logarithmic \textbf{lattice}  $\cm\in\mclog(\PP/C)$ for $M$ whose exponents are all on $\tau$. In addition, if $\cm'$ is another logarithmic lattice for $M$ with all exponents on $\tau$, then there exists a unique isomorphism $\ph:\cm\to\cm'$ rendering diagram 
\[
\xymatrix{\ga_\PP(\cm)\ar[r]^-{\sim}\ar[dr]_{\ga_\PP(\ph)} & M
\\
&\ar[u]_-{\sim}\ga_\PP(\cm')}
\] 
commutative. 
\end{thm}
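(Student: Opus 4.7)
The plan is to split the statement into existence and uniqueness, in both cases reducing the assertion on $\PP$ to the corresponding local statement at the two formal disks $0$ and $\infty$, for which Theorem \ref{06.07.2020--4} is already available.

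For existence, I first apply Proposition \ref{09.06.2020--1} to write $M\simeq\ga eul_\PP(V,A)$ and decompose $V=\bigoplus_\vr V_\vr$ into generalized eigenspaces of $A$. For each $\vr\in\mm{Sp}(A)$, let $\alpha_\vr,\beta_\vr\in\ZZ$ be the unique integers with $\vr+\alpha_\vr\in\tau$ and $-\vr+\beta_\vr\in\tau$; they exist because $\tau$ meets every class modulo $\ZZ$ exactly once. I then define $\cm\subset M$ by
\[
\cm(\af_0)=\bigoplus_\vr x^{\alpha_\vr}C[x]\ot V_\vr,\qquad \cm(\af_\infty)=\bigoplus_\vr y^{\beta_\vr}C[y]\ot V_\vr,
\]
both viewed as $\co_\PP$-submodules of $M=\bigoplus_\vr C[x^\pm]\ot V_\vr$. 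The gluing on $\af_0\cap\af_\infty$ is automatic since $x^{\pm n}$ is a unit in $C[x^\pm]=C[y^\pm]$, and the connection inherited from $M$ preserves $\cm$ because each $V_\vr$ is $A$-stable while $\vt$ acts diagonally on the monomials $x^{\alpha_\vr}$ and $y^{\beta_\vr}$. A short computation of the residue in the chosen bases then yields exponents $\vr+\alpha_\vr\in\tau$ at $0$ and $\beta_\vr-\vr\in\tau$ at $\infty$ (the sign at $\infty$ coming from $\vt=-\vt_y$, exactly as already noted for $eul_\PP(V,A)$).

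For uniqueness, let $\cm,\cm'$ be two lattices as in the statement and let $\phi:\ga_\PP(\cm)\to\ga_\PP(\cm')$ be the canonical isomorphism over $C[x^\pm]$. Applying Theorem \ref{06.07.2020--4} to $\bb r_0\cm$ and $\bb r_0\cm'$---whose exponents lie in $\tau$, so pairwise differences are either zero or non-integers---lifts $\phi$ to a unique $\phi_0:\bb r_0\cm\to\bb r_0\cm'$; analogously one obtains $\phi_\infty$. The key intermediate identity is
\[
\cm(\af_0)=M\cap\bb r_0\cm\quad\text{inside}\ \bb r_0 M,
\]
which holds because $M$ is free over $C[x^\pm]$ (Remark \ref{15.10.2020--1}), $\cm(\af_0)$ is a lattice inside it, and $\pos Cx\cap C[x^\pm]=C[x]$ inside $\lau Cx$. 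Together with the fact that $\phi_0$ reduces to $\phi$ after base change to $\lau Cx$, this identity forces $\phi(\cm(\af_0))\subset\cm'(\af_0)$ and yields a $C[x]$-linear restriction of $\phi$; likewise at $\infty$. The two restrictions agree on the overlap (both equal $\phi$ there), so they glue to a morphism $\tilde\phi:\cm\to\cm'$ in $\mclog(\PP/C)$. Uniqueness of $\tilde\phi$ follows from the local uniqueness of $\phi_0$ and $\phi_\infty$, and $\tilde\phi$ is an isomorphism because applying the same procedure to $\phi^{-1}$ yields an inverse.

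The step I expect to require the most care is the descent from the completions $\pos Cx$ and $\pos Cy$ back to $C[x]$ and $C[y]$ in the uniqueness argument; it is the explicit identity $\cm(\af_0)=M\cap\bb r_0\cm$, rather than a more general faithfully-flat descent theorem, that makes the gluing essentially formal. Once this is settled, existence reduces to the direct construction above and the remainder of uniqueness is just the gluing of two local isomorphisms that already have the required uniqueness property.
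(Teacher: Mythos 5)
Your proof is correct, and the uniqueness half is essentially the paper's own argument: both descend the unique formal isomorphism supplied by Theorem \ref{06.07.2020--4} back to the affine charts via the identity $\cm(\af_0)=M\cap\bb r_0\cm$ inside $\bb r_0 M$ (this is precisely how the paper deduces that $f$ carries $\cm_0$ into $\cm_0'$). The existence half, however, is a genuinely different and more explicit route. The paper produces two separate Euler lattices, $\bb M_0=eul_\PP(V,A)$ adapted to $0$ via Corollary \ref{06.07.2020--3} and $\bb M_\infty=eul_\PP(W,B)$ adapted to $\infty$, and then glues $\bb M_0(\af_0)$ with $\bb M_\infty(\af_\infty)$ through an isomorphism over $C[x^\pm]$ that it obtains by invoking the full equivalence of Theorem \ref{12.06.2020--2} (needed to promote the formal isomorphism $u_0$ to a global $\wt u_0$). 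You instead use Proposition \ref{09.06.2020--1} once to write $M\simeq\ga eul_\PP(V,A)$ globally and then twist the tautological lattice $\co_\PP\ot_C V$ block-by-block along the generalized eigenspace decomposition of $A$, with independent shifts $\alpha_\vr$ at $0$ and $\beta_\vr$ at $\infty$; the residue computation then gives exponents $\vr+\alpha_\vr$ at $0$ and $\beta_\vr-\vr$ at $\infty$ (your sign at $\infty$ is correct, since $\bb r_\infty$ is taken with respect to $\vt_y=-\vt$), both in $\tau$. Your construction thus exhibits the Deligne--Manin lattice directly as a concrete $\co_\PP$-submodule of $M$ and avoids Theorem \ref{12.06.2020--2} entirely, at the cost of needing the eigenspace decomposition; the paper's argument is more categorical and avoids splitting $V$. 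Both ultimately rest on the same nontrivial input, Proposition \ref{09.06.2020--1}.
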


\begin{proof}
There exists, by Corollary \ref{06.07.2020--3}, an object $(V,A)\in\bb{End}$ with $\mm{Sp}_A\subset\tau$ and an isomorphism $u_0:\ga eul(V,A)\stackrel \sim\to\bb r_0(M)$. Let $\bb M_0=eul_\PP(V,A)$; this is an object of $\mclog(\PP/C)$. 
Since $\bb r_0(\ga_\PP(\bb M_0))=\ga eul(V,A)$, Theorem 
\ref{12.06.2020--2} produces an isomorphism $\wt u_0:\ga_\PP(\bb M_0)\stackrel\sim\to M$ such that $\bb r_0(\wt u_0)=u_0$.
Similarly, we obtain $(W,B)\in\bb{End}$, $u_\infty:\ga eul(W,B)\stackrel\sim\to\bb r_\infty(M)$, $\bb M_\infty=eul_\PP(W,B)$ and $\wt u_\infty:\ga_\PP(\bb M_\infty)\stackrel\sim\to M$. From this we derive an isomorphism from $\mc(C[x^\pm]/C)$:
\[
v:\ga_\PP(\bb M_0)\arou\sim \ga_\PP(\bb M_\infty)
\]
and hence an object of $\cm\in\mclog(\PP/C)$ with the properties required in the statement. 

Let now $\cm$ and $\cm'$ be as in the statement; we posses an isomorphism in $\mc(C[x^\pm]/C)$: 
 $f:\ga_\PP(\cm)\stackrel\sim\to\ga_\PP(\cm')$. Write $\cm_0=\cm(\af_0)$ and   $\cm_0'=\cm'(\af_0)$ so that we have an isomorphism of $C[x^\pm]$-modules 
 \[f:\cm_0\otu{C[x]}C[x^\pm]\arou\sim\cm_0'\otu{C[x]}C[x^\pm].\]
 Going over to $\lau Cx$ and using Theorem \ref{06.07.2020--4}, we conclude that \[f\left( \cm_0\otu{C[x]}\pos Cx\right)\subset\cm_0'\otu{C[x]}\pos Cx
\]
and this allows us  extend $f$ to a morphism of $\ph_0 :\cm_0\to\cm_0'$ of $C[x]$-modules. Note that $\ph_0$ is the {\it unique} such extension and that it is  automatically compatible with the derivations; all this because  $\cm_0'\to\cm_0'\ot C[x^\pm]$ is injective. In addition, working with the inverse of $f$, we conclude that $\ph_0$ is an isomorphism. The same reasoning can be applied to $\cm_\infty=\cm(\af_\infty)$ and $\cm'_\infty=\cm'(\af_\infty)$ and the proof is concluded. 
\end{proof}

A difference  between the construction given in Theorem \ref{06.07.2020--6} and that of Section \ref{08.04.2020--3} is that the logarithmic model is not necessarily  of the form $eul_\PP(V,A)$: indeed, these are {\it free} $\co_\PP$-modules. Here is an   illustration.

\begin{ex}Let $C=\CC$, $\tau=\{z\in\CC\,:\,0\le\bb{Re}(z)<1\}$ and  $M=\ga eul_\PP(\CC,\fr12)$. In this case, $\cb=eul_\PP(\CC,\fr12)$ is not what we look for since   $\mm{Exp}(\bb r_\infty\cb)=\{-\fr12\}$. Let us now consider $\cm=\co_\PP(\infty)$, which we understand as being defined by $\cm(\af_0)=\CC[x]\po\bb m_0$ and $\cm(\af_\infty)=\CC[y]\po\bb m_\infty$ subjected to the relation  $\bb m_\infty= x^{-1}\bb m_0$. Now define   $\na|{\af_0}$ by  $\na\bb m_0=\fr12\bb m_0$, so that $\na(\bb m_\infty)=-\fr12\bb m_\infty$ and hence $\mm{Exp}(\cm)=\{\fr12\}$. 
\end{ex}

\section*{\large{\bf Part II}}

We shall now concentrate on the study which gives the title to this paper: regular--singular connections depending on parameters. 

\section{Connections with an action of a ring}
\label{28.06.2021--1}

We fix a commutative $C$-algebra $\La$ whose dimension as a vector space is finite.
The following definition is basic: 

\begin{dfn}\label{28.06.2021--2}Let $\cC$ be a $C$-linear category. We define $\cC_{(\La)}$ as the category whose
\begin{enumerate}\item[{\it objects}] are  couples $(c,\al)$ with $c\in\cC$ and $\al:\La\to\mm{End}(c)$ is a morphism of rings, and an
\item[{\it arrow}] from $(c,\al)$ to $(c',\al')$ is a morphism $\ph:c\to c'$ such that $\al'(\la)\circ\ph=\ph\circ\al(\la)$ for all $\la\in \La$. 
\end{enumerate}To ease terminology, we shall also refer to objects in $\cC_{(\La)}$ as objects of $\cC$ with an action of $\La$ and usually abandon the arrow to the ring of endomorphism from notation. In this case, the endomorphism obtained from $\la
\in \La$ will come with no distinctive graphical symbol. 
\end{dfn}

\begin{dfn}\label{16.10.2020--1}Let $M\in(\modules{\pos Cx})_{(\La)}$. We say that $M$ is \textbf{free in relation to $\La$} if\footnote{In \cite{hai-dos_santos20}, we used the expression ``relatively to'', but after a more careful study, we prefer to write ``in relation to'', as will be done in this paper.} there exists a $\La$-module $V$, an isomorphism of $\pos Cx$-modules $\ps:\pos Cx\ot_CV\to M$ such that, for each $\la\in \La$, $f\in\pos Cx$ and $v\in V$, we have
\[
\ps(f\ot\la v)=\la(\ps(f\ot v)).
\] 
\end{dfn}

\begin{rmk}One easily sees that the canonical arrow $\La\ot_C\pos Cx\to\pos\La x$ is an isomorphism and hence  we may identify $(\modules{\pos Cx})_{(\La)}$ with $\modules{\pos\La x}$. Then, a $\pos Cx$--module with action of $\La$ is free in relation to $\La$ if and only if, as a $\pos \La x$-module, it is of the form $\pos \La x\ot_\La V$ for some $\La$-module $V$. The reason for working with $\pos Cx$-modules with an action of $\La$ instead of with $\pos \La x$-modules is justified by the fact that we wish to rely on the theories of connections over $\lau Cx$ and $\pos Cx$. 
\end{rmk}

Here is the first useful property stemming from the definition: 
\begin{lem}Let $M\in(\modules{\pos Cx})_{(\La)}$ be free in relation to $\La$. Then, $M$ is a free $\pos Cx$-module.\qed 
\end{lem}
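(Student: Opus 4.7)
The plan is to unfold the definition of relative freeness and observe that the only content is the basic fact that a $C$-vector space has a basis. By hypothesis, there is a $\La$-module $V$ together with an isomorphism of $\pos Cx$-modules
\[
\ps:\pos Cx\ot_CV\arou\sim M
\]
(the compatibility with the $\La$-action is part of the data but plays no role for the present claim). Hence it suffices to show that $\pos Cx\ot_CV$ is a free $\pos Cx$-module.

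Next, I would forget the $\La$-structure on $V$ and regard it solely as a $C$-vector space. Since $C$ is a field, $V$ admits a $C$-basis $\{v_i\}_{i\in I}$; no finite-dimensionality assumption on $V$ is needed here. Then the family $\{1\ot v_i\}_{i\in I}$ is a $\pos Cx$-basis of $\pos Cx\ot_CV$, giving the canonical direct-sum decomposition
\[
\pos Cx\ot_CV\;=\;\bigoplus_{i\in I}\pos Cx\po(1\ot v_i),
\]
which exhibits $\pos Cx\ot_CV$ as a free $\pos Cx$-module.

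Transporting this basis along $\ps$, the family $\{\ps(1\ot v_i)\}_{i\in I}$ is a $\pos Cx$-basis of $M$, so $M$ is free over $\pos Cx$, as desired. There is no real obstacle: the statement is essentially a tautology once one remembers that $V$, being a vector space over the field $C$, has a basis, and that tensor product with $\pos Cx$ over $C$ commutes with arbitrary direct sums.
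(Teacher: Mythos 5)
Your proof is correct, and since the paper marks this lemma with $\qed$ directly (treating it as immediate), it is essentially the intended argument: pick a $C$-basis of $V$, observe that it gives a $\pos Cx$-basis of $\pos Cx\ot_CV$, and transport it along $\ps$. Nothing further is required.
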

Another key property is: 
\begin{lem}Let $M\in(\modules {\pos Cx})_{(\La)}$ be free in relation to $\La$. Then, for each ideal $\g l\subset\La$, the $\pos Cx$-module $M/\g lM$ is also free in relation to $\La$. In particular, $M/\g lM$ is a free $\pos Cx$-module.\qed
\end{lem}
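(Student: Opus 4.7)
The plan is to follow the evident candidate: fix a relatively free presentation $\psi:\pos Cx\otimes_C V\stackrel\sim\to M$ with $V$ a $\La$-module (as in Definition \ref{16.10.2020--1}) and show that $\psi$ descends to an isomorphism $\pos Cx\otimes_C(V/\g lV)\stackrel\sim\to M/\g lM$ which still intertwines the $\La$-actions. Everything else is automatic.

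The first step is to identify the submodule $\g lM\subset M$ in terms of the presentation $\psi$. The claim to verify is the equality
\[
\g lM \;=\; \psi\bigl(\pos Cx\otimes_C \g lV\bigr).
\]
The inclusion $\supset$ is immediate: for $\la\in\g l$, $f\in\pos Cx$ and $v\in V$, the defining property $\psi(f\otimes\la v)=\la\cdot\psi(f\otimes v)$ puts $\psi(f\otimes\la v)$ inside $\g lM$. Conversely, an arbitrary element of $\g lM$ is a finite $\pos Cx$-linear combination of elements $\la\cdot m$ with $\la\in\g l$; writing $m=\psi(\sum_i f_i\otimes v_i)$ and applying the defining property again, $\la\cdot m=\psi(\sum_i f_i\otimes \la v_i)\in\psi(\pos Cx\otimes_C\g lV)$.

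Once this equality is granted, the isomorphism $\psi$ induces an isomorphism of $\pos Cx$-modules
\[
\overline\psi:\;\pos Cx\otimes_C (V/\g lV)\;\stackrel\sim\longrightarrow\;M/\g lM,
\]
using the right exactness of $\pos Cx\otimes_C(-)$ to identify the left-hand side with $(\pos Cx\otimes_CV)/(\pos Cx\otimes_C\g lV)$. Now $V/\g lV$ is naturally a $\La$-module (since $\g l$ is an ideal of $\La$), and the intertwining relation $\overline\psi(f\otimes\la\bar v)=\la\cdot\overline\psi(f\otimes\bar v)$ is inherited from the analogous relation for $\psi$. This shows $M/\g lM$ is free relatively to $\La$, witnessed by $\overline\psi$ and the $\La$-module $V/\g lV$. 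The ``in particular'' clause then follows from the previous lemma stating that any relatively free module is free over $\pos Cx$.

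I expect no genuine obstacle: the only thing requiring verification is the description of $\g lM$, and this is a direct manipulation of the intertwining property. The rest is a mechanical application of right-exactness of the tensor product $\pos Cx\otimes_C(-)$ together with the fact that the quotient construction $V\mapsto V/\g lV$ respects the $\La$-module structure.
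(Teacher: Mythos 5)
Your proof is correct and is the natural argument implicit behind the paper's $\qed$: identify $\g lM$ with $\psi(\pos Cx\otimes_C \g lV)$ using the intertwining property of $\psi$, pass to quotients (using exactness of $\pos Cx\otimes_C(-)$, which here holds even without invoking right-exactness since $C$ is a field), note that $V/\g lV$ is still a $\La$-module because $\g l$ is an ideal, and invoke the preceding lemma for the last clause. No gap.
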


We now begin to apply the definition of objects with an action of $\La$ to categories of connections. 

\begin{ex}The category $\bb{End}_{(\La)}$ consists of couples $(V,A)$ where $V$ is a $\La$-module and $A$ is an endomorphism of $\La$-modules. 
\end{ex}

\begin{ex}\label{02.07.2020--1}
The simplest way of constructing objects in $\mclog(\pos Cx/C)_{(\La)}$ is by means of Euler connections. 
Let $V$ be a finite $\La$-module,  $A:V\to V$ a $C$-linear endomorphism and $eul(V,A)$ the associated Euler connection. 
Assume now that  $A$ is, in addition, $\La$-linear (so that $(V,A)\in
\bb{End}_{(\La)}$). Then,  for each $\la\in \La$, the endomorphism $[\la]:\pos Cx\ot_CV\to\pos Cx\ot_CV$ defined by $[\la](f\ot v)=f\ot \la v$ is horizontal and gives $eul(V,A)$ the structure of an object from   $\mclog(\pos Cx/C)_{(\La)}$. Clearly, $\pos Cx\ot_C V\in(\modules{\pos Cx})_{(\La)}$ is free in relation to $\La$. 
\end{ex}

\begin{thm}[Deligne-Manin lattices]\label{10.03.2020--2}Let  
$M\in\mcrs(\lau Cx/C)_{(\La)}$. There exists a  logarithmic \textbf{lattice} $\cm\in\mclog(\pos Cx/C)$ for $M$ and an action of $\La$ on it  such that:
\begin{enumerate}[(1)]
\item All exponents of $\cm$ lie on $\tau$. 
\item The isomorphism $\ga(\cm)\simeq M$ is compatible with the $\La$-actions. 
\item $\cm$ is free in relation to $\La$. 
\item  In fact, $\cm$ and its $\La$ action can be chosen to be of the form $eul(V,A)$, where $(V,A)\in\bb{End}_{(\La)}$ is an in Example \ref{02.07.2020--1}. 
\end{enumerate}

Finally, if 
\[
\ph:M\aro N
\] 
is an arrow  of $\mcrs(\lau Cx/C)_{(\La)}$ and $\cn\in\mclog(\pos Cx/C)$ is a logarithmic lattice of $N$   affording an action of $\La$ and having properties (1)--(3), then there exists a \textbf{unique} $\wt\ph:\cm\to\cn$ in $\mclog(\pos Cx/C)_{(\La)}$
rendering 
\[
\xymatrix{\cm\ar[r]^{\wt \ph}\ar[d]_{\rm can.}&\cn\ar[d]^{\rm can.}\\M\ar[r]_{\ph}&N}
\]
commutative. 
\end{thm}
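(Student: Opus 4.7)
The plan is to separate the problem into two steps: first, use the classical theory from Part~I to obtain a logarithmic lattice $\cm$ in the form $eul(V,A)$ without paying attention to the $\La$-action; second, transport the $\La$-action from $M$ to $\cm$ by the rigidity furnished by Proposition~\ref{09.04.2019--1}(3). Concretely, forget the $\La$-action on $M$ for a moment and apply Corollary~\ref{06.07.2020--3} together with Theorem~\ref{03.03.2020--1} to produce a logarithmic \emph{lattice} $\cm$ for $M$ of the form $eul(V,A)$, where $V:=\cm/x\cm$, $A:=\mm{res}(\na_\cm)$, and $\mm{Sp}_A\subset\tau$. Since the map $\tau\to C/\ZZ$ is bijective, $\tau\ominus\tau$ contains no non-zero integer, so the hypothesis of Proposition~\ref{09.04.2019--1}(3) is satisfied by $\cm$ against itself. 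The canonical arrow
\[
\hh{\mclog}{\cm}{\cm}\aro\hh{\mc}{M}{M}
\]
is therefore a bijection, and being a ring map, a ring isomorphism. Inverting it lifts the given $\al:\La\to\mm{End}(M)$ to a ring morphism $\wt{\al}:\La\to\mm{End}_{\mclog}(\cm)$; this yields (1) and (2).

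For (3) and (4) we must identify this abstract $\La$-action with the standard one coming from a $\La$-module structure on $V$. Each $[\la]_\cm$ is horizontal and $\pos Cx$-linear; hence it preserves $x\cm$ and induces, on the quotient $V$, an endomorphism $[\la]_V$ that commutes with $A$. This puts $V$ in $\bb{End}_{(\La)}$ and realises $eul(V,A)$ as an object of $\mclog(\pos Cx/C)_{(\La)}$ in the sense of Example~\ref{02.07.2020--1}. To show that the two $\La$-actions on $\cm\simeq eul(V,A)$ coincide, apply Lemma~\ref{10.03.2020--1}(2) (whose spectral assumption is immediate since $\mm{Sp}_A\ominus\mm{Sp}_A\subset\tau\ominus\tau$ contains no negative integer): every horizontal $\pos Cx$-endomorphism of $eul(V,A)$ is of the form $\mathrm{id}\ot\ph$ for a unique $\ph\in\mm{End}_C(V)$ commuting with $A$. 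Applied to $[\la]_\cm$, this shows $[\la]_\cm=\mathrm{id}\ot[\la]_V$, which is exactly the action of Example~\ref{02.07.2020--1}. In particular $\cm\simeq\pos Cx\ot_C V$ is free relatively to $\La$.

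The uniqueness statement is an immediate consequence of the same lifting principle. Given $\ph:M\to N$ and $\cn$ as in the theorem, $\mm{Exp}(\cn)\ominus\mm{Exp}(\cm)\subset\tau\ominus\tau$ again contains no positive integer, so Proposition~\ref{09.04.2019--1}(3) produces a unique $\wt\ph\in\hh{\mclog}{\cm}{\cn}$ restricting to $\ph$ after applying $\ga$. To check $\La$-linearity of $\wt\ph$, fix $\la\in\La$: both $[\la]_\cn\circ\wt\ph$ and $\wt\ph\circ[\la]_\cm$ are elements of $\hh{\mclog}{\cm}{\cn}$ whose image in $\hh{\mc}{M}{N}$ equals $\ph\circ\al(\la)=\al_N(\la)\circ\ph$; by the uniqueness half of Proposition~\ref{09.04.2019--1}(3) they coincide.

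The only genuinely non-formal step is the identification in the second paragraph: one must be sure that the lift of the $\La$-action supplied by the rigidity lemma is the ``standard'' Euler action on $\pos Cx\ot V$. Everything else, including uniqueness of $\wt\ph$ and compatibility with derivations, is dictated by the bijectivity of the $\hh{\mclog}{\cdot}{\cdot}\to\hh{\mc}{\cdot}{\cdot}$ maps among logarithmic lattices whose exponents live in $\tau$.
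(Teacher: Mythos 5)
Your proof is correct and follows essentially the same route as the paper's: both produce a Deligne--Manin lattice $\cm\simeq eul(V,A)$ with $\mm{Sp}_A\subset\tau$ from the classical theory (shearing plus Theorem~\ref{03.03.2020--1}), lift the $\La$-action on $M$ through the bijection of Proposition~\ref{09.04.2019--1}(3), and then invoke Lemma~\ref{10.03.2020--1}(2) to recognize this lifted action as the Euler action $\id\ot[\la]_V$, giving relative freeness. The only difference is presentational: you first define $[\la]_V$ on $V=\cm/x\cm$ by passing to the quotient and then verify that the two actions on $eul(V,A)$ coincide via Lemma~\ref{10.03.2020--1}(2), whereas the paper reads off the $\La$-module structure on $V$ from that lemma in one stroke; your handling of the uniqueness and $\La$-equivariance of $\wt\ph$ is the same argument stated with a bit more care.
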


\begin{proof}By shearing (Theorem \ref{02.03.2020--3}) there exists a logarithmic lattice $\cm$ of $M$ whose exponents are all on $\tau$.  By Theorem \ref{03.03.2020--1} we can say that $\cm=eul(V,A)$, where $A:V\to V$ is an endomorphism of the finite dimensional $C$-space $V$.  Note that $\mm{Sp}_A\subset\tau$. 

Using Proposition \ref{09.04.2019--1},  the natural morphism 
\[
\mm{End}_{\mclog}( {eul}(V,A))\aro \mm{End}_{\mcrs}(M)
\]
is bijective. Hence, we obtain a morphism of rings $\La\to\mm{End}_{\mclog}(eul(V,A))$; this gives an action of $\La$ on $eul(V,A)$ and   condition (2) is tautologically fulfilled.

In order to show that $eul(V,A)$ is free in relation to $\La$, we remark  that, due to Lemma \ref{10.03.2020--1}(2), for each $\la\in\La$, the arrow  
\[
\la:\pos Cx\ot_C V\aro \pos Cx\ot_C V
\] 
in $\mclog(\pos Cx/C)$ is of the form $1\ot \la$ for an arrow $\la:V\to V$  such that $\la\circ A=A\circ \la$.   We therefore obtain an action of $\La$ on $V$.  We have therefore showed that properties (1)--(4) hold.

Let $N$ and $\cn$ be as in the statement. The existence of an arrow  $\wt\ph:\cm\to\cn$ from $\mclog(\pos Cx/C)$
fitting into the commutative diagram 
\[
\xymatrix{\cm\ar[r]^{\wt \ph}\ar[d]_{\rm can.}&\cn\ar[d]^{\rm can.}\\M\ar[r]_{\ph}&N}
\]
is guaranteed by   Proposition \ref{09.04.2019--1}-(3). (Recall that as   $\pos Cx$-modules,  $\cm$ and $\cn$ are free.) That $\wt\ph$ is unique and  respects the actions of $\La$ is a simple consequence of the fact that $\cn\to N$ is an injection.   
\end{proof}

We end this section by showing that what was said before about formal connections is, modified accordingly, valid for regular-singular connections on $C[x^\pm]$ (considered in Section \ref{01.07.2020--5}). We start with   an immediate consequence of Theorem \ref{12.06.2020--2}. 

\begin{cor}\label{07.07.2020--1}The natural functor 
\[
 \mcrs(C[x^{\pm}]/C)_{(\La)}\aro\mcrs(\lau Cx/C)_{(\La)}
\]
deduced from $\bb r_0$
is an equivalence. \qed
\end{cor}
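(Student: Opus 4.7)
The strategy is to reduce to Theorem \ref{12.06.2020--2}, observing that the construction $\cC\mapsto\cC_{(\La)}$ of Definition \ref{28.06.2021--2} preserves equivalences: any fully faithful, essentially surjective $C$-linear functor $F\colon\cC\to\cD$ induces a fully faithful, essentially surjective functor $F_{(\La)}\colon\cC_{(\La)}\to\cD_{(\La)}$. Applying this general remark to $F=\bb r_0$ gives the conclusion.

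To spell out the three steps. First, faithfulness of the induced functor is inherited directly: two arrows in $\mcrs(C[x^\pm]/C)_{(\La)}$ which agree after $\bb r_0$ agree already in $\mcrs(C[x^\pm]/C)$, and a fortiori as morphisms respecting the $\La$-actions.

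For fullness, let $(M,\al)$ and $(N,\be)$ be objects of $\mcrs(C[x^\pm]/C)_{(\La)}$ and let $\ps\colon\bb r_0M\to\bb r_0N$ be a morphism commuting with the $\La$-actions. By fullness of $\bb r_0$ (Theorem \ref{12.06.2020--2}) there is a unique $\ph\colon M\to N$ in $\mcrs(C[x^\pm]/C)$ with $\bb r_0(\ph)=\ps$. For each $\la\in\La$ the arrow $\ph\circ\al(\la)-\be(\la)\circ\ph\colon M\to N$ is sent by $\bb r_0$ to $\ps\circ\al(\la)-\be(\la)\circ\ps=0$; faithfulness forces it to vanish, so $\ph$ is a morphism in $\mcrs(C[x^\pm]/C)_{(\La)}$.

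For essential surjectivity, take $(N,\be)\in\mcrs(\lau Cx/C)_{(\La)}$. Essential surjectivity of $\bb r_0$ provides $M\in\mcrs(C[x^\pm]/C)$ together with an isomorphism $u\colon\bb r_0M\stackrel\sim\to N$; transporting $\be$ across $u$ gives a ring homomorphism $\be'\colon\La\to\mm{End}_{\mcrs(\lau Cx/C)}(\bb r_0M)$. The bijection
\[
\bb r_0\colon\mm{End}_{\mcrs(C[x^\pm]/C)}(M)\arou\sim\mm{End}_{\mcrs(\lau Cx/C)}(\bb r_0M)
\]
coming from Theorem \ref{12.06.2020--2} is an isomorphism of (non-commutative) rings, since $\bb r_0$ preserves composition and identities; thus $\be'$ lifts uniquely to a ring homomorphism $\al\colon\La\to\mm{End}_{\mcrs(C[x^\pm]/C)}(M)$. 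By construction $u$ is then an isomorphism $(M,\al)\stackrel\sim\to(N,\be)$ in $\mcrs(\lau Cx/C)_{(\La)}$.

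There is no real obstacle here; the only thing to double-check is that the hom-bijection supplied by Theorem \ref{12.06.2020--2} respects ring structure on endomorphism sets, which is automatic since $\bb r_0$ is a $C$-linear functor. Everything else is the purely formal transfer of the equivalence to the categories of $\La$-actions.
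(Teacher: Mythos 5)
Your proof is correct and follows exactly the route the paper intends: the corollary is stated as an immediate consequence of Theorem \ref{12.06.2020--2}, and your argument simply spells out the standard categorical fact that an equivalence $F\colon\cC\to\cD$ of $C$-linear categories induces an equivalence $F_{(\La)}\colon\cC_{(\La)}\to\cD_{(\La)}$. The three steps (faithfulness inherited, fullness plus faithfulness to check compatibility with $\La$-actions, transporting the ring morphism $\be$ through the ring isomorphism on endomorphism sets) are exactly the details the paper leaves to the reader.
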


Before stating the next result, let us put forward the analogue of Definition \ref{16.10.2020--1}. 

\begin{dfn}\label{16.10.2020--2}A coherent $\co_\PP$-module $\cm$ with an action of $\La$ is \emph{locally free in relation to $\La$} if there exists a finite $\La$-module $V$ and an isomorphism 
$\cm(\af_0)\simeq V\ot_C\co(\af_0)$, resp. $\cm(\af_\infty)\simeq V\ot_C\co(\af_\infty)$, such that,  under these isomorphisms, the action of $\La$ is given by means of its action on $V$.   
\end{dfn}

\begin{rmk}Obviously, if $\cm\in\mclog(\PP/\CC)_{(\La)}$ is locally free in relation to $\La$, then it is a locally free $\co_\PP$-module. 
\end{rmk}

\begin{thm} [Deligne-Manin models]\label{10.07.2020--3} Let $M\in\mcrs(C[x^\pm]/C)_{(\La)}$. There exists a logarithmic \textbf{lattice} $\cm\in\mclog(\PP/C)$ endowed with an action of $\La$ such that:
\begin{enumerate}[(1)]\item All exponents of $\cm$ lie on $\tau$. 
\item The canonical isomorphism \[\ga_\PP(\cm)\arou\sim M
\]
is compatible with $\La$-actions.  
\item $\cm$ is locally free in relation to $\La$. 
\end{enumerate}

\end{thm}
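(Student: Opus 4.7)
The plan is to imitate the strategy used in Theorem \ref{06.07.2020--6}, replacing at each step the non-equivariant tools by their $\La$-equivariant counterparts coming from Part I of Section \ref{28.06.2021--1}. The key technical inputs are the formal $\La$-equivariant Deligne--Manin theorem (Theorem \ref{10.03.2020--2}) at each of the two points $0$ and $\infty$, together with the equivalence
\[
\bb r_0:\mcrs(C[x^\pm]/C)_{(\La)}\arou\sim\mcrs(\lau Cx/C)_{(\La)}
\]
of Corollary \ref{07.07.2020--1} (and its obvious analogue at $\infty$), which allows one to lift formal $\La$-equivariant isomorphisms to global ones over $C[x^\pm]$. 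The construction of $\cm$ is then by gluing an Euler model on $\af_0$ to an Euler model on $\af_\infty$ through the resulting transition map.

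In detail, I would first apply Theorem \ref{10.03.2020--2} to $\bb r_0 M\in\mcrs(\lau Cx/C)_{(\La)}$ to obtain a couple $(V,A)\in\bb{End}_{(\La)}$ with $\mm{Sp}_A\subset\tau$ and a $\La$-equivariant isomorphism $u_0:\ga eul(V,A)\stackrel\sim\to\bb r_0 M$. Form the global Euler object $\bb M_0:=eul_\PP(V,A)\in\mclog(\PP/C)_{(\La)}$ as in Example \ref{02.07.2020--1}; by construction $\bb r_0(\bb M_0)\simeq eul(V,A)$ as $\La$-equivariant logarithmic connections, so Corollary \ref{07.07.2020--1} lifts $u_0$ to a (unique) $\La$-equivariant isomorphism $\wt u_0:\ga_\PP(\bb M_0)\stackrel\sim\to M$ in $\mcrs(C[x^\pm]/C)_{(\La)}$. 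Repeating the argument at $\infty$, I apply Theorem \ref{10.03.2020--2} to $\bb r_\infty M\in\mcrs(\lau Cy/C)_{(\La)}$ and obtain $(W,A')\in\bb{End}_{(\La)}$ with $\mm{Sp}_{A'}\subset\tau$ together with a $\La$-equivariant isomorphism $u_\infty:\ga eul(W,A')\stackrel\sim\to\bb r_\infty M$. Because of the sign convention $\bb r_\infty(eul_\PP(W,C))\simeq eul(W,-C)$ recorded just before Theorem \ref{12.06.2020--2}, I set $\bb M_\infty:=eul_\PP(W,-A')\in\mclog(\PP/C)_{(\La)}$, and invoke the $\infty$-analogue of Corollary \ref{07.07.2020--1} to lift $u_\infty$ to a $\La$-equivariant isomorphism $\wt u_\infty:\ga_\PP(\bb M_\infty)\stackrel\sim\to M$.

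Composing, $v:=\wt u_\infty^{-1}\circ\wt u_0$ is a $\La$-equivariant isomorphism $\ga_\PP(\bb M_0)\stackrel\sim\to\ga_\PP(\bb M_\infty)$ in $\mc(C[x^\pm]/C)_{(\La)}$. I then define $\cm\in\mclog(\PP/C)$ by gluing: set $\cm(\af_0):=\bb M_0(\af_0)=C[x]\ot_CV$ and $\cm(\af_\infty):=\bb M_\infty(\af_\infty)=C[y]\ot_CW$, with transition over $\af_0\cap\af_\infty=\spc C[x^\pm]$ given by $v$. Since $v$ is horizontal and $\La$-linear, the two logarithmic derivations and the two $\La$-actions agree on the overlap, so $\cm\in\mclog(\PP/C)_{(\La)}$. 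By the very construction of $\bb M_0$ and $\bb M_\infty$ (Example \ref{02.07.2020--1}), each is locally free relatively to $\La$ with $\La$-module fibres $V$ and $W$, which proves (3). The identification $\wt u_0$ furnishes the $\La$-equivariant isomorphism $\ga_\PP(\cm)\simeq M$ required in (2), and the exponents of $\cm$ at $0$ and at $\infty$ are $\mm{Sp}_A\subset\tau$ and $\mm{Sp}_{A'}\subset\tau$ respectively, yielding (1).

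The only delicate point---and this is the step I expect to be the main obstacle---is the lifting from the formal level to the level of $C[x^\pm]$ performed twice above: one has to know that Corollary \ref{07.07.2020--1} is fully faithful on $\mcrs(C[x^\pm]/C)_{(\La)}$, which is what allows one to upgrade the formal $\La$-equivariant isomorphism $u_0$ (resp.\ $u_\infty$) to the required $\wt u_0$ (resp.\ $\wt u_\infty$). Everything else---including the compatibility of derivations and $\La$-actions under the gluing, and the verification that the glued object is indeed locally free relatively to $\La$---reduces to unwinding the Euler construction of Example \ref{02.07.2020--1}.
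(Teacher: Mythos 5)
Your proof is correct and follows the paper's strategy exactly: the paper's proof of Theorem \ref{10.07.2020--3} consists precisely in rerunning the proof of Theorem \ref{06.07.2020--6}, replacing Corollary \ref{06.07.2020--3} by Theorem \ref{10.03.2020--2} and Theorem \ref{12.06.2020--2} by Corollary \ref{07.07.2020--1}, which is what you do. Your explicit handling of the sign at $\infty$ (taking $\bb M_\infty=eul_\PP(W,-A')$ so that $\bb r_\infty\bb M_\infty\simeq eul(W,A')$ has exponents in $\tau$) is in fact somewhat more careful than the wording of the proof of Theorem \ref{06.07.2020--6}, which writes $\bb M_\infty=eul_\PP(W,B)$ and leaves the sign adjustment implicit.
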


\begin{proof}This is much the same as the proof of Theorem 
\ref{06.07.2020--6}, except that we make the following replacements. The use of Corollary \ref{06.07.2020--3} is replaced by that of  Theorem \ref{10.03.2020--2}. The use of Theorem \ref{12.06.2020--2} is replaced by that of Corollary \ref{07.07.2020--1}.
\end{proof}

Note that the statement of Theorem \ref{10.07.2020--3} leaves out the uniqueness properties analogous to the ones in Theorem \ref{10.03.2020--2}. The verification of these occupies the following lines.

Let $\cm\in\mclog(\PP/C)_{(\La)}$ and $\de\in\ZZ$. Let $\cm(\de)$ stand for the logarithmic connection obtained by gluing $x^{-\de}\cm(\af_0)$ and $y^{-\de}\cm(\af_\infty)$ via the isomorphism $x=y^{-1}$. 
In the possession of this definition, we have an analogue of Proposition \ref{09.04.2019--1}:

\begin{prp}\label{12.10.2020--1}
Let $\ph:E\to F$ be an arrow of $\mcrs(C[x^\pm]/C)_{(\La)}$. Let $\ce$ and $\cf$ be logarithmic models for $E$ and $F$ and assume that $\cf$ is in fact a lattice. Let $\de$ be the largest integer in   $\mm{Exp}(\cf)\ominus\mm{Exp}(\ce)$. Then $x^\de\ph(\ce)\subset \cf$. In particular, there exists a unique $\Ph:\ce\to\cf(\de)$ from $\mclog(\PP/C)_{(\La)}$ such that $\ga_\PP(\Ph)=\ph$.
\end{prp}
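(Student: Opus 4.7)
The plan is to reduce to the formal case by completing at $0$ and $\infty$, invoke Proposition \ref{09.04.2019--1} and its counterpart at $\infty$, and then descend from formal power-series to $\co_\PP$-sheaves.

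First I would establish the reduction. By the definition of $\cf(\de)$ in Section \ref{01.07.2020--5}, producing an extension $\Ph\in\mclog(\PP/C)_{(\La)}$ of $\ph$ amounts to exhibiting two inclusions inside $F$:
\[
\ph(\ce(\af_0))\subset x^{-\de}\cf(\af_0)\qquad\text{and}\qquad \ph(\ce(\af_\infty))\subset y^{-\de}\cf(\af_\infty).
\]
The resulting pair of $\co$-linear maps automatically agree with $\ph$ on the overlap $\spc C[x^\pm]$, and they are compatible with $\vt$ and with the action of $\La$ because $\ph$ is. Uniqueness is then built in, because the inclusions $\cf(\de)(\af_\bullet)\hookrightarrow F$ are injective (a consequence of $\cf$ being a lattice). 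In particular, the $\La$-equivariance of $\Ph$ follows: both $\la\circ\Ph$ and $\Ph\circ\la$ extend $\la\circ\ph=\ph\circ\la$ and must therefore coincide.

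Next, I would apply the formal result at each point. The restriction $\bb r_0\ph\colon\bb r_0 E\to \bb r_0F$ belongs to $\mcrs(\lau Cx/C)_{(\La)}$, with logarithmic model $\bb r_0\ce$ for the source and logarithmic \emph{lattice} $\bb r_0\cf$ for the target. Since $\mm{Exp}(\bb r_0\ce)\subset\mm{Exp}(\ce)$ and $\mm{Exp}(\bb r_0\cf)\subset\mm{Exp}(\cf)$, the largest integer in $\mm{Exp}(\bb r_0\cf)\ominus\mm{Exp}(\bb r_0\ce)$ is at most $\de$; Proposition \ref{09.04.2019--1}(2) therefore gives
\[
x^\de\bb r_0\ph(\bb r_0\ce)\subset\bb r_0\cf.
\]
The analogous statement at $\infty$, proved exactly in the same way in the local coordinate $y$, yields $y^\de\bb r_\infty\ph(\bb r_\infty\ce)\subset\bb r_\infty\cf$.

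Finally, I would descend. Because $\cf$ is a lattice, $\cf(\af_0)$ is a free $C[x]$-module; choosing a basis identifies, inside $\bb r_0F\simeq\lau Cx^n$, the submodule $F$ with $C[x^\pm]^n$, $\cf(\af_0)$ with $C[x]^n$, and $\bb r_0\cf$ with $\pos Cx^n$. The elementary identity $C[x^\pm]\cap\pos Cx=C[x]$ inside $\lau Cx$ then gives $F\cap\bb r_0\cf=\cf(\af_0)$. Combining with the formal inclusion,
\[
x^\de\ph(\ce(\af_0))\subset F\cap\bb r_0\cf=\cf(\af_0),
\]
and the symmetric argument at $\infty$ supplies the second inclusion. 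Gluing the two local lifts produces the required $\Ph$, and uniqueness has already been noted.

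The main delicacy is the descent step: it is precisely the lattice hypothesis on $\cf$ that makes $\cf(\af_0)$ free (hence torsion-free), and without that the clean intersection formula $F\cap\bb r_0\cf=\cf(\af_0)$ fails, so one cannot promote the formal inclusion to a global one. The bookkeeping at $\infty$---reflecting the sign $\vt(y)=-y$---is a minor but genuine point, handled by applying the $y$-local version of Proposition \ref{09.04.2019--1} with the convention for $\mm{Exp}_\infty$ inherited from $\mclog(\pos Cy/C)$.
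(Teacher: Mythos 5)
Your argument is correct, and it takes a slightly different route from the paper's. The paper disposes of the inclusions $x^\de\ph(\ce_0)\subset\cf_0$ and $y^\de\ph(\ce_\infty)\subset\cf_\infty$ by \emph{re-running} the descending induction of Proposition~\ref{09.04.2019--1}(2) directly on the $C[x]$- (resp.\ $C[y]$-) modules $\ce_0,\cf_0$: the generalized-eigenspace decomposition of $\ce_0/x\ce_0$ and the residue analysis carry over verbatim to the polynomial setting. You instead treat Proposition~\ref{09.04.2019--1}(2) as a black box at the formal completions, obtaining $x^\de\,\bb r_0\ph(\bb r_0\ce)\subset\bb r_0\cf$, and then descend via the intersection formula $F\cap\bb r_0\cf=\cf(\af_0)$ inside $\bb r_0F$, valid because the lattice hypothesis makes $\cf(\af_0)$ free over $C[x]$ so that a basis reduces the claim to $C[x^\pm]\cap\pos Cx=C[x]$. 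What your route buys is a clean reuse of the formal statement rather than a re-derivation; what the paper's route buys is that no completion-and-descent bookkeeping is needed. Two small remarks: (i) your observation that the largest integer in $\mm{Exp}(\bb r_0\cf)\ominus\mm{Exp}(\bb r_0\ce)$ is at most $\de$ is fine, since by definition $\mm{Exp}(\bb r_\bullet\cm)\subset\mm{Exp}(\cm)$, and the conclusion of Proposition~\ref{09.04.2019--1}(2) for the smaller $\de'$ implies it for $\de\ge\de'$; (ii) the treatment of uniqueness and $\La$-equivariance via injectivity of $\cf(\de)(\af_\bullet)\hookrightarrow F$ is exactly the paper's.
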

\begin{proof}Let us write $(-)_0$ and $(-)_\infty$ for sections over $\af_0$ and $\af_\infty$.
Similarly as in the proof of Proposition \ref{09.04.2019--1}, we obtain $x^\de\ph(\ce_0)\subset \cf_0$ and $y^\de\ph(\ce_\infty)\subset \cf_\infty$. As $\cf$ is locally free, we extend $\ph$ to $\Ph:\ce\to\cf(\de)$, an arrow of $\mclog(\PP/C)$. As the restriction $\cf(\de)_0\to F$ and $\cf(\de)_\infty\to F$ are injective, we conclude that $\Ph$ is an arrow of $\mclog(\PP/C)_{(\La)}$.
Obviously $\ga_\PP(\Ph)=\ph$. Injectivity of $\cf(\de)_0\to F$ and $\cf(\de)_\infty\to F$
 again assures that $\Ph$ is unique.\end{proof}

\begin{thm}\label{12.10.2020--2} Let $\ph:M\aro N$ is an arrow  of $\mcrs(C[x^\pm]/C)_{(\Lambda)}$. Let $\cm$ and $\cn$ be logarithmic models  for $M$ and $N$ affording an action of $\La$, and having properties (1)--(3) of Theorem \ref{10.07.2020--3}. Then there exists a \textbf{unique} $\Ph:\cm\to\cn$ in $\mclog(\pos Cx/C)_{(\La)}$ satisfying \[\ga_\PP(\Ph)=\ph.\]
\end{thm}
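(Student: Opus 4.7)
The plan is to invoke Proposition \ref{12.10.2020--1} with the integer $\de$ taken to be $0$; the substance of the argument is an arithmetic observation on the exponent sets.

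Since the natural map $\tau\to C/\ZZ$ is a bijection, any two elements of $\tau$ that differ by an integer must in fact coincide. Because both $\mm{Exp}(\cm)$ and $\mm{Exp}(\cn)$ lie in $\tau$, it follows that the difference set $\mm{Exp}(\cn)\ominus\mm{Exp}(\cm)$ meets $\ZZ$ in at most $\{0\}$; in particular it contains no positive integer.

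With this in hand, I would apply Proposition \ref{12.10.2020--1} with $\de=0$. The inductive descent in its proof, which chart-by-chart runs the argument of Proposition \ref{09.04.2019--1}(2), starts from some large $k$ with $x^k\ph(\cm(\af_0))\subseteq\cn(\af_0)$ (and similarly with $y$ on $\af_\infty$); such $k$ exists because $\ph$ is $C[x^\pm]$-linear and both $\cm,\cn$ are coherent. The descent reduces $k$ by one at each step provided $k\notin\mm{Exp}(\cn)\ominus\mm{Exp}(\cm)$, and by the observation above no positive integer obstructs it. One therefore reaches $k=0$ and concludes $\ph(\cm)\subseteq\cn$ on each chart, producing the desired $\Ph:\cm\to\cn(0)=\cn$ in $\mclog(\PP/C)_{(\La)}$ with $\ga_\PP(\Ph)=\ph$. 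Compatibility of $\Ph$ with the $\La$-action is inherited from Proposition \ref{12.10.2020--1}; alternatively, it can be read off from the uniqueness of the lift by comparing $\la\circ\Ph$ and $\Ph\circ\la$, both of which lift $\la\circ\ph=\ph\circ\la$.

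Uniqueness is immediate from local freeness of $\cn$: the localization maps $\cn(\af_0)\hookrightarrow\ga_\PP(\cn)$ and $\cn(\af_\infty)\hookrightarrow\ga_\PP(\cn)$ are injective (no $x$- or $y$-torsion), so any $\Psi:\cm\to\cn$ with $\ga_\PP(\Psi)=0$ must vanish. I do not expect any substantive obstacle. The only delicate point is the degenerate case in which $\mm{Exp}(\cn)\ominus\mm{Exp}(\cm)$ contains no integer at all: there the same inductive descent runs indefinitely, forcing $\ph(\cm)\subseteq x^n\cn$ for every $n$ on both charts and hence $\ph=0$, so $\Ph=0$ is the trivially unique lift.
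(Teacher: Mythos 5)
Your argument is correct and is essentially the paper's proof spelled out: the paper refers to the last part of the proof of Theorem \ref{10.03.2020--2} with Proposition \ref{12.10.2020--1} in place of Proposition \ref{09.04.2019--1}(3), which amounts precisely to your observation that the $\tau$-normalization forces $\de=0$ so that $\cn(\de)=\cn$. The degenerate case you flag at the end does not require separate treatment---under the intended reading of $\de$ (an integer with no member of $\mm{Exp}(\cn)\ominus\mm{Exp}(\cm)$ above it), one simply takes $\de=0$ and the descent runs through unchanged, giving $\ph(\cm)\subseteq\cn$ directly.
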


\begin{proof} This is much the same as the last part of the proof of Theorem \ref{10.03.2020--2}, except that we make the following replacement. The use of Proposition \ref{09.04.2019--1}-(3) is replaced by that of  Proposition \ref{12.10.2020--1}. 
\end{proof}

\section{Formal connections with parameters in a ring: basic  results}\label{28.06.2021--5}

We let $R$ be a complete local noetherian $C$-algebra with residue field $C$ and maximal ideal $\g r$. The $C$-algebras $R/\g r^{k+1}$ shall be abbreviated to $R_k$. We let $\vt$ stand for the $R$-linear derivation  on $\pos Rx$ defined by $\vartheta\sum a_nx^n=\sum a_nnx^n$, as well as its extension to $\lau Rx=\pos Rx[x^{-1}]$. Finally, in developing our arguments, we shall find convenient to identify  $\pos Rx/\g r^{k+1}\pos Rx$ and $\pos {R_k}x$ via the canonical morphism \cite[Theorem 8.11, p.61]{matsumura89}. (Note also that this identification is possible replacing $\g r^{k+1}$ by any given ideal of $R$.)
 
We begin by recycling the definitions appearing in section \ref{08.05.2020--1}.
\begin{dfn}\label{08.05.2021--1}
\begin{enumerate}[(1)]\item We let $\mc(\lau Rx/R)$, the category of \textbf{$R$-linear connections}, be the category whose   
objects are couples $(M,\na)$ consisting of a finite $\lau Rx$-module and an $R$-linear endomorphism $\na:M\to M$ satisfying Leibniz's rule $\na(fm)=\vt(f)m+f\na(m)$, and whose arrows are defined   by imitating  Definition \ref{08.04.2020--1}. 
\item We let  $\mclog(\pos Rx/R)$, the category of \textbf{$R$-linear logarithmic connections}, be the category whose   objects  are couples $(\cm,\na)$ consisting of a  finite   $\pos Rx$-module and an $R$-linear endomorphism $\na:\cm\to\cm$ satisfying Leibniz's rule $\na(fm)=\vt(f)m+f\na(m)$, and whose arrows are defined   by imitating  Definition \ref{08.04.2020--1}. Whenever no confusion is possible, we omit reference to $\na$ in the notation. 
\item We denote by   
\[
\ga:\mclog(\pos Rx/R)\aro\mc(\lau Rx/R)
\]
the obvious functor and define $\mcrs(\lau Rx/R)$,  the category of \textbf{regular-singular} connections, as being the full subcategory of $\mc(\lau Rx/R)$ whose objects are (isomorphic to an object) in the image of $\ga$. 
\item 
Given $M\in\mcrs(\lau Rx/R)$, any object  $\cm\in\mclog(\pos Rx/R)$
for which there is an isomorphism $\ga(\cm)\simeq M$ is said to be a \textbf{ logarithmic model} of $M$.
\item A logarithmic model $\cm$ of $M$ is  called   \textbf{$x$-pure} if multiplication by $x$ is injective on $\cm$.
\end{enumerate} 
\end{dfn}

It comes as no surprise that $\mc(\lau Rx/R)$ is an abelian category such that the forgetful functor to $\modules{\lau Rx}$ is exact.

Given $(\cm,\na)\in\mclog(\pos Rx/R)$, 
it is clear that the $\pos Rx$-module $\bigcup_k(0:x^k)_\cm=\{m\in \cm\,:\,x^km=0\}$ is stable under $
\na$, so that, taking the quotient, we have: 

\begin{lem}\label{15.07.2020--1}Each $M\in\mcrs(\lau Rx/R)$
has an $x$-pure logarithmic model. \qed
\end{lem}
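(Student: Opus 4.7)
The plan is to exhibit the $x$-pure model as the quotient of an arbitrary log model by its $x$-torsion submodule, which the author has already flagged in the paragraph immediately preceding the statement. So only three things need verification: that the quotient is a logarithmic connection, that it is indeed $x$-pure, and that its generic fibre (under $\ga$) still recovers $M$.

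First I would start from any logarithmic model $(\cm,\na)$ of $M$, which exists by hypothesis. Set
\[
\cm_{\mathrm{tors}}\;=\;\bigcup_k (0:x^k)_\cm.
\]
Since $\pos Rx$ is noetherian and $\cm$ is finite, $\cm_{\mathrm{tors}}$ is a finite submodule. The stability under $\na$ is a one-line Leibniz computation: if $x^k m=0$, then $0=\na(x^k m)=kx^k m+x^k\na(m)=x^k\na(m)$, so $\na(m)\in\cm_{\mathrm{tors}}$. Hence $\na$ descends to an $R$-linear endomorphism $\ov\na$ of $\cm':=\cm/\cm_{\mathrm{tors}}$ satisfying Leibniz's rule, so $(\cm',\ov\na)\in\mclog(\pos Rx/R)$.

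Next I would check $x$-purity of $\cm'$: if $x\bar m=0$ in $\cm'$, then $xm\in\cm_{\mathrm{tors}}$, so $x^{k+1}m=0$ for some $k$, which means $m\in\cm_{\mathrm{tors}}$ and $\bar m=0$. This is immediate from the definition of the torsion submodule.

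Finally, to see that $\cm'$ is still a model of $M$, I would apply the exact functor $\lau Rx\otimes_{\pos Rx}-$ to the short exact sequence
\[
0\aro\cm_{\mathrm{tors}}\aro\cm\aro\cm'\aro0.
\]
Every element of $\cm_{\mathrm{tors}}$ is killed by a power of $x$, and $x$ is invertible in $\lau Rx$, so $\lau Rx\otimes_{\pos Rx}\cm_{\mathrm{tors}}=0$. Therefore the induced arrow $\ga(\cm)\to\ga(\cm')$ is an isomorphism compatible with derivations, and composing with any fixed isomorphism $\ga(\cm)\simeq M$ gives $\ga(\cm')\simeq M$. There is no real obstacle here; the only point demanding attention is the Leibniz computation showing stability of the torsion, and the paper has already granted this.
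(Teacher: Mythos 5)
Your proposal is correct and follows exactly the route the paper sketches in the sentence immediately preceding the lemma: quotient an arbitrary logarithmic model by the $x$-torsion submodule $\bigcup_k(0:x^k)_\cm$, which is stable under $\na$ by the Leibniz computation you give. The three verifications you spell out (stability of torsion, $x$-purity of the quotient, and that localization at $x$ kills $\cm_{\mathrm{tors}}$ so the generic fibre is unchanged) are precisely the details the paper leaves to the reader.
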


This simple result can be improved, see Theorem \ref{16.05.2020--1} below. 
But its utility is promptly manifest. 

\begin{prp}\label{25.09.2020--1}The full subcategory $\mcrs(\lau Rx/R)$ of $\mc(\lau Rx/R)$ is stable under quotients and subobjects. 
\end{prp}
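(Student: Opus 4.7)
The plan is to give direct constructions in each case, using an $x$-pure logarithmic model as a starting point. Let $M\in\mcrs(\lau Rx/R)$ and, by Lemma \ref{15.07.2020--1}, fix an $x$-pure logarithmic model $\cm$, which we may therefore regard as a $\pos Rx$-submodule of $M=\ga(\cm)=\lau Rx\otu{\pos Rx}\cm$ generating it over $\lau Rx$. Note that $\pos Rx$ is noetherian because $R$ is so \cite[Theorem 8.11 or thereabouts]{matsumura89}.

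\emph{Subobjects.} Let $N\subset M$ be a subobject in $\mc(\lau Rx/R)$. Define $\cn:=N\cap\cm$, an intersection taken inside $M$. Then $\cn$ is a $\pos Rx$-submodule of the finite $\pos Rx$-module $\cm$, and is therefore itself finite over $\pos Rx$. Since both $N$ and $\cm$ are stable under $\na_M$, the same is true of $\cn$, so that $\cn\in\mclog(\pos Rx/R)$. It remains to identify $\ga(\cn)$ with $N$ inside $M$. One inclusion is obvious. For the other, given $n\in N$, write $n=x^{-k}m$ with $m\in\cm$ and $k\ge0$; then $m=x^k n\in N\cap\cm=\cn$ since $N$ is a $\lau Rx$-submodule, whence $n\in\lau Rx\cdot\cn=\ga(\cn)$. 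Hence $N\simeq\ga(\cn)$ is regular-singular.

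\emph{Quotients.} Let $\pi:M\to Q$ be an epimorphism in $\mc(\lau Rx/R)$ and set $\cq:=\pi(\cm)$, a $\pos Rx$-submodule of $Q$. As a quotient of $\cm$, it is finite over $\pos Rx$, and since $\na_Q$ is induced from $\na_M$ modulo $\ker\pi$, it stabilises $\cq$, giving $\cq\in\mclog(\pos Rx/R)$. Because $\cm$ generates $M$ over $\lau Rx$ and $\pi$ is surjective, $\cq$ generates $Q$ over $\lau Rx$, so that $\ga(\cq)=\lau Rx\cdot\cq=Q$. Hence $Q\simeq\ga(\cq)$ is regular-singular.

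There is no real obstacle here: the only point demanding a moment's care is the identification $\ga(\cn)=N$ in the subobject case, which is where the $x$-purity of $\cm$ (and thus its realisation as a submodule of $M$) is crucial—without it, intersecting ``$N\cap\cm$'' would not even be well-defined.
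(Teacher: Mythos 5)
Your proof is correct and follows essentially the same route as the paper's (rather terse) sketch: take an $x$-pure model $\cm$ for $M$ and intersect with $N$, respectively project along $\pi$, to obtain models for the subobject and the quotient. For quotients the paper says only ``treated using models for the kernel,'' i.e.\ one takes $\cn = \cm\cap K$ for $K=\ker\pi$ and then uses $\cm/\cn$; by the isomorphism theorem this is the same $\pos Rx$-module as your $\pi(\cm)$, so the two arguments coincide. Your elaboration of why $\ga(\cn)=N$ (using that $\cm$ generates $M$ over $\lau Rx$ and that $N$ is $\lau Rx$-stable) is exactly the point the paper leaves implicit, and it is verified correctly.
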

\begin{proof}[Sketch of proof.] Let  $N\in\mc(\lau Rx/R)$ be a subobject of $M\in\mcrs(\lau Rx/R)$.  Let $\cm$ be an $x$-{\it pure} logarithmic model for $M$ (cf. Lemma \ref{15.07.2020--1}). Then $\cn:=\cm\cap N$ is an $x$-pure logarithmic model of $N$. Quotients are treated using models for the kernel.   
\end{proof}

Furthermore, given $(\cm,\na)$ and $(\cm',\na')$ in $\mclog(\pos Rx/R)$, their tensor product $\cm\ot_{\pos Rx}\cm'$   gives rise to an object of $\mclog(\pos Rx/R)$ by decreeing 
\[
\na\ot\na'(m\ot m') = \na (m)\ot m'+m\ot \na'(m').
\]
It is then the case that    $\mclog(\pos Rx/R)$ becomes an $R$-linear tensor category and $\mcrs(\lau Rx/R)$ is an $R$-linear abelian tensor category.  

\begin{ex}[Twisted models]\label{24.09.2020--1} For each $\de\in\ZZ$, let  $\one(\de)$ denote the {\it free} $\pos Rx$-submodule of $\lau Rx$ generated by $x^{-\de}$. Clearly, $\one(\de)$ is invariant under $\vt$ and we obtain in this way an $x$-pure logarithmic model for the trivial object $(\lau Rx,\vt)$. We define analogously, for each $\cm\in\mclog(\pos Rx/R)$ the object $\cm(\de)$ as being $\one(\de)\ot\cm$. 
\end{ex}  
We now explore further immediate similarities of this theory and the classical one.

\begin{ex}Let $\bb{End}_R$ be the category whose objects are couples $(V,A)$ consisting of a finite $R$--module $V$ and an $R$--linear endomorphism $A:V\to V$, and whose arrows are given as in Definition \ref{12.06.2020--5}. 
Given $(V,A)\in\bb{End}_R$, let $D_A:\pos Rx\ot_RV\to\pos Rx\ot_RV$ be defined by 
\[
D_A(f\ot v)=\vt f\ot v+f\ot Av.
\]
This gives rise to an $R$-linear functor 
\[
eul :\bb{End}_R\aro \mclog(\pos Rx/R)
\]
analogous to the one in Definition \ref{12.06.2020--1}. 
\end{ex}

Let $(\cm,\na)\in\mclog(\pos Rx/R)$ and note that 
\begin{equation}\label{16.05.2020--2}
\mm{res}_\na: \cm/(x)\aro\cm/(x),
\end{equation}
given by 
\begin{equation}\label{16.05.2020--3}
\mm{res}_\na(m+(x) ) = \na(m)+(x), 
\end{equation}
 is $R$-linear.

\begin{dfn}[Residue and exponents]\label{28.06.2021--6}
The $R$-linear map \eqref{16.05.2020--2} is called the {\it residue} of $\na$. If 
\[\ov{\mm{res}}_\na : \cm/(\g r,x)\aro\cm/(\g r,x)
\] 
stands for the $C$-linear morphism obtained from $\mm{res}_\na$ by reduction modulo $\g r$, we call the set $\mm{Sp}_{\ov{\mm{res}}_\na}$ the set of {\it exponents} of $\na$; it shall be denoted by $\mm{Exp}(\cm,\na)$, $\mm{Exp}(\na)$ or $\mm{Exp}(\cm)$ if no confusion is likely. 
\end{dfn}

\begin{rmk}\label{09.05.2022--1}
It should be highlighted that the {\it exponents belong to $C$}. 
The reason for taking this path is, from a practical viewpoint, justified by the fact that we are able to prove the results we wanted 
with it. But it is important to throw more light on our choice. While explaining either this work or \cite{hai-dos_santos20} to others, the question ``why not take, in case $R$ is a domain, the exponents in a quotient field of $R$?'' frequently  appeared. This is certainly a possible path and when we started this theory, our exponents (in Definition \ref{28.06.2021--6}) were called  {\it reduced exponents}. Then, at some point it became clear that: (a) Reduced exponents were the ones controlling the theory and leading to Corollary \ref{16.07.2020--1},  our main result; (b) In taking limits, we need no-reduced rings; (c) In taking limits, it is important to have the   exponents being constant while ``reducing'', see Corollary \ref{17.05.2020--1}. We then decided that the reduced exponents deserved a prominent name. 
On the other hand, in different situations, our definition may   be insufficient; see Remark \ref{09.05.2022--2} below.  
\end{rmk}

Let us now start by recalling

\begin{lem}[{\cite[II, Problem 4.1]{wasow76}}]\label{01.07.2019--2} Let 
$m$ and $n$ be positive integers,  $A$  an element of $\mm M_{m}(C)$, and $B$ an element of $\mm M_{n}(C)$. Let   \[
f:\mm M_{m\ti n}(C)\aro \mm M_{m\ti n}(C)
\]
be the linear map defined by $X\mapsto AX-XB$. Then $\mm {Sp}_f=\mm{Sp}_A\ominus\mm{Sp}_B$.   In particular, if 
no two distinct eigenvalues of $A$ differ by an integer, then  the linear transformation $\nu\id-\mm{ad}_A:\mm M_m(C)\to\mm M_m(C)$ is invertible for each $\nu\in\ZZ\smallsetminus\{0\}$. \qed
\end{lem}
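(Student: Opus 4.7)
The plan is to realize $f$ as an operator on a tensor product, where it splits as a difference of two commuting pieces, and then read off the spectrum by simultaneous triangularization.

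First I would identify $\mm M_{m\ti n}(C)$ with $C^m\ot_C C^n$ via $E_{ij}\leftrightarrow e_i\ot e_j$. Under this identification, left multiplication by $A$ becomes $A\ot\id_{C^n}$, while right multiplication by $B$ (that is, $X\mapsto XB$) becomes $\id_{C^m}\ot B^\top$. Hence $f$ corresponds to the operator
\[
F\;=\;A\ot\id_{C^n}\;-\;\id_{C^m}\ot B^\top
\]
on $C^m\ot C^n$. The two summands $A\ot\id$ and $\id\ot B^\top$ manifestly commute.

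Next, since $C$ is algebraically closed, choose bases of $C^m$ and $C^n$ in which $A$ and $B^\top$ are upper triangular, with diagonal entries $\al_1,\ldots,\al_m$ and $\be_1,\ldots,\be_n$ respectively (repeated with algebraic multiplicity). Order the tensor basis $e_i\ot e_j$ lexicographically. Then $A\ot\id$ and $\id\ot B^\top$ are both upper triangular in this basis, and so is their difference $F$, whose diagonal entries are exactly $\al_i-\be_j$. Consequently $\mm{Sp}_F=\{\al_i-\be_j\}=\mm{Sp}_A\ominus \mm{Sp}_{B^\top}$, and since $\mm{Sp}_{B^\top}=\mm{Sp}_B$, the first assertion follows.

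For the ``In particular'' clause, apply the first assertion with $B=A$: then $f=\mm{ad}_A$ and $\mm{Sp}_{\mm{ad}_A}=\mm{Sp}_A\ominus\mm{Sp}_A$. An endomorphism $\nu\id-\mm{ad}_A$ fails to be invertible exactly when $\nu\in\mm{Sp}_{\mm{ad}_A}$, i.e.\ when $\nu=\al-\al'$ for some $\al,\al'\in\mm{Sp}_A$. If $\nu\in\ZZ\smallsetminus\{0\}$, such a representation would force $\al\neq \al'$ and $\al-\al'\in\ZZ$, contradicting the hypothesis. Hence $\nu\id-\mm{ad}_A$ is invertible, as claimed. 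No serious obstacle is expected; the only point to handle with care is the appearance of $B^\top$ in the tensor realization of right multiplication, which however has the same spectrum as $B$ and therefore does not affect the conclusion.
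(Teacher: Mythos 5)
Your proof is correct. The paper itself offers no argument for this lemma, simply citing \cite[II, Problem 4.1]{wasow76} and marking it \qed, so there is nothing to compare against; your tensor-product realization $f \leftrightarrow A\ot\id - \id\ot B^\top$ on $C^m\ot C^n$ followed by simultaneous upper-triangularization (using lexicographic ordering of the product basis) is the standard and clean way to establish $\mm{Sp}_f=\mm{Sp}_A\ominus\mm{Sp}_B$, and the ``in particular'' clause follows exactly as you say by taking $B=A$.
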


A direct application of Lemma \ref{01.07.2019--2} and Nakayama's Lemma shows the following.
\begin{cor}\label{01.07.2019--3}   Let $A\in\mm M_n(R)$ be such that its reduction modulo $\g r$, call it  $\ov A\in \mm M_n(C)$, has no two distinct eigenvalues differing by an integer. Then, for any  $\nu\in\ZZ\smallsetminus\{0\}$, the $R$-linear morphism $\nu\id-\mm{ad}_A:\mm M_n(R)\to\mm M_n(R)$ is bijective. \qed
\end{cor}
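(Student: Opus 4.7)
The plan is to reduce the statement modulo $\g r$ via Lemma~\ref{01.07.2019--2}, obtain the conclusion over $C$, and then lift back to $R$ by the usual Nakayama-type arguments available for finitely generated modules over a local ring.

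First, I would observe that the reduction of $\nu\id-\mm{ad}_A$ modulo $\g r$ is exactly $\nu\id-\mm{ad}_{\ov A}:\mm M_n(C)\to\mm M_n(C)$. Applying Lemma~\ref{01.07.2019--2} with $A=B=\ov A$, the spectrum of $\mm{ad}_{\ov A}$ is $\mm{Sp}_{\ov A}\ominus\mm{Sp}_{\ov A}$, and so the spectrum of $\nu\id-\mm{ad}_{\ov A}$ is the set of differences $\nu-(\la-\mu)$ with $\la,\mu\in\mm{Sp}_{\ov A}$. For this map to fail to be invertible we would need $\nu=\la-\mu$ for some pair $(\la,\mu)$; this is ruled out by the hypothesis when $\la\ne\mu$, and by $\nu\ne0$ when $\la=\mu$. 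Hence $\nu\id-\mm{ad}_{\ov A}$ is bijective on $\mm M_n(C)$.

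Next I would transfer surjectivity back to $R$. The module $\mm M_n(R)\simeq R^{n^2}$ is finitely generated, and the cokernel of $\nu\id-\mm{ad}_A$ is a finitely generated $R$-module whose reduction modulo $\g r$ is the cokernel of $\nu\id-\mm{ad}_{\ov A}$, namely zero. Nakayama's Lemma then gives that the cokernel itself vanishes, so $\nu\id-\mm{ad}_A$ is surjective.

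Finally, injectivity comes for free: a surjective $R$-linear endomorphism of a finitely generated $R$-module is automatically injective (this is the standard consequence of the Cayley--Hamilton/determinantal trick, as in \cite[Theorem~2.4]{matsumura89}). Consequently $\nu\id-\mm{ad}_A$ is bijective, as desired. I do not foresee any real obstacle here; the only subtlety is the standard injectivity-from-surjectivity lemma, which is why the hint of the statement mentions only Lemma~\ref{01.07.2019--2} and Nakayama's Lemma.
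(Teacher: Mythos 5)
Your proof is correct and follows exactly the route the paper indicates (Lemma~\ref{01.07.2019--2} followed by Nakayama's Lemma, with the standard ``surjective endomorphism of a finitely generated module is injective'' fact supplying the last step). As a minor simplification you could avoid both Nakayama and the Cayley--Hamilton trick at once: since $\mm M_n(R)\simeq R^{n^2}$ is free of finite rank and $R$ is local, $\nu\id-\mm{ad}_A$ is bijective if and only if its determinant is a unit in $R$, which happens if and only if that determinant is nonzero modulo $\g r$, i.e.\ if and only if $\nu\id-\mm{ad}_{\ov A}$ is invertible over $C$ --- which is precisely what Lemma~\ref{01.07.2019--2} gives.
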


\begin{thm}[compare to Theorem \ref{03.03.2020--1}]\label{16.05.2020--5}Let $(\cm,\na)\in\mclog(\pos Rx/R)$ be such that $\cm$ is a free $\pos Rx$-module and  no two distinct exponents of $\na$ differ by an  integer. 
Then, $(\cm,\na)$ is isomorphic to $eul(\cm/(x),\mm{res}_\na)$.

Otherwise said, consider a differential system 
\[
\vt\bm y=A\bm y
\]
defined by  $A\in\mm M_r(\pos Rx)$ such that $A(0)$ modulo $\g r$ has no two distinct eigenvalues   differing by an integer. Then, there exists $P\in\GL_r(\pos Rx)$ such that, writing $\bm y=P\bm z$, we arrive at the system 
\[
\vt\bm z= B\bm z
\]
in which   $B\in\mm M_r(R)$. 
\end{thm}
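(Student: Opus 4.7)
The plan is to prove the differential-system version of the statement; the abstract isomorphism then follows by reading the change of basis as an isomorphism of logarithmic connections. I fix a basis $\bm e_1,\ldots,\bm e_r$ of $\cm$ as a free $\pos Rx$-module and let $B\in\mm M_r(\pos Rx)$ be defined by $\na\bm e_j=\sum_i B_{ij}\bm e_i$, so that $\mm{res}_\na$ is represented on $V:=\cm/(x)$ by $A_0:=B(0)\in\mm M_r(R)$ in the induced basis. A direct computation shows that a $\pos Rx$-linear map $\ph\colon eul(V,\mm{res}_\na)\to\cm$ specified on generators by $\ph(1\ot\ov{\bm e}_j)=\sum_i P_{ij}\bm e_i$ is a morphism of logarithmic connections exactly when the matrix $P\in\mm M_r(\pos Rx)$ satisfies
\[
\vt P \;=\; P A_0 - B P.
\]
Imposing in addition $P(0)=I$ (so that $P\in\GL_r(\pos Rx)$ and $\ph$ is an isomorphism) together with this equation is all that needs to be produced.

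Expanding $P=\sum_{n\ge 0}P_n x^n$ and $B=\sum_{n\ge 0}B_n x^n$ (with $B_0=A_0$ and $P_0=I$) and comparing coefficients of $x^n$, the equation holds tautologically at $n=0$, while for $n\ge 1$ it becomes the recursion
\[
(n\id + \mm{ad}_{A_0})(P_n) \;=\; -\sum_{i=1}^{n}B_i\,P_{n-i}.
\]
Since the right-hand side depends only on $P_0,\ldots,P_{n-1}$, the matrix $P_n\in\mm M_r(R)$ is determined by induction \emph{provided} the $R$-linear operator $n\id+\mm{ad}_{A_0}$ on $\mm M_r(R)$ is invertible for every $n\ge 1$.

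This last invertibility is precisely Corollary \ref{01.07.2019--3} applied with $A=A_0$ and $\nu=-n$: by hypothesis, no two distinct eigenvalues of $\ov{A_0}\in\mm M_r(C)$ differ by an integer, so the corollary ensures that $-n\id-\mm{ad}_{A_0}$ is an automorphism of $\mm M_r(R)$ for each $n\in\ZZ\smallsetminus\{0\}$, and hence so is its negative $n\id+\mm{ad}_{A_0}$. The recursion then produces a $P\in\mm M_r(\pos Rx)$ with $P(0)=I$, which automatically lies in $\GL_r(\pos Rx)$ since $\det P(0)=1\in R^\times$. The change of basis $\tilde{\bm e}_j:=\sum_i P_{ij}\bm e_i$ converts the matrix of $\na$ into the constant matrix $A_0$, yielding the isomorphism $(\cm,\na)\simeq eul(V,\mm{res}_\na)$.

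The substantive content of the proof is concentrated in a single point: the denominators appearing in the power-series construction are exactly the operators $n\id+\mm{ad}_{A_0}$, and it is Corollary \ref{01.07.2019--3}---which itself rests on Lemma \ref{01.07.2019--2} combined with Nakayama---that guarantees their invertibility over the local ring $R$. Beyond this, the only place where one must be careful is tracking sign conventions when passing from the invariant statement about $\na$ to the matrix equation $\vt P = PA_0-BP$; everything else reduces to formal power-series bookkeeping.
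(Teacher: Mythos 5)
Your proof is correct and is precisely the argument the paper gestures at: the paper proves the theorem by citing Wasow's power-series iteration (Sections 4.2--4.3 and 5.1 of \cite{wasow76}) with the crucial linear-algebra input replaced by Corollary \ref{01.07.2019--3}, and you have simply spelled out that iteration, including the recursion $(n\id+\mm{ad}_{A_0})(P_n)=-\sum_{i=1}^n B_iP_{n-i}$ whose solvability over $R$ is exactly what Corollary \ref{01.07.2019--3} grants. Both the identification of $\mm{res}_\na$ with $B(0)$ in the chosen basis and the invertibility of $P$ via $\det P(0)=1$ and the locality of $\pos Rx$ are handled correctly.
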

\begin{proof}One proceeds as in Sections 4.2, 4.3 and 5.1 of \cite{wasow76}, but substitute the use of  Wasow's Theorem 4.1 by our Corollary \ref{01.07.2019--3}. 
\end{proof}

We now move to shearing
techniques which   allows us to eliminate the hypothesis on the exponents in Theorem \ref{16.05.2020--5}. We begin by setting up the necessary linear algebra.

\begin{prp}\label{10.07.2019--1}Let $\La$ be a commutative $C$-algebra which is a finite dimensional  $C$-space. Let $\g n\subset\La$ be a nilpotent ideal,     $V$ a finite $\La$-module and $A:V\to V$  a  $\La$-linear arrow. Considering $A$ as a $C$-linear endomorphism, write $\vr_1,\ldots,\vr_r$ for its  distinct eigenvalues and let  
\[
V=\bb G(A,\vr_1)\op\cdots \op \bb G(A,\vr_r)
\]
be the decomposition into generalized eigenspaces. 

\begin{enumerate}[(1)]\item Each $\bb G(A,\vr_j)$ is invariant under $\La$ and $\g n\po\bb G(A,\vr_j)\not=\bb G(A,\vr_j)$. 

\item Write 
$\ov V=V/\g n V$ and  $\ov {\bb G(A,\vr_j)}$ for the image of $\bb G(A,\vr_j)$ in $\ov V$. Then 
$\ov {\bb G(A,\vr_j)}\not=0$. 
\item Let $\ov A$ be the endomorphism of $\ov V$ induced by $A$. Then the space $\ov {\bb G(A,\vr_j)}$ is the generalized eigenspace of $\ov A$ associated to $\vr_j$ and $\mm{Sp}_{\ov A}=\mm{Sp}_A$.
\end{enumerate}
\end{prp}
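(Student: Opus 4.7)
My plan is to reduce all three parts to two facts about the finite-dimensional commutative $C$-algebra $\La$: that it is Artinian, so every nilpotent ideal sits inside the Jacobson radical (if $n^N=0$ then $1-\la n$ admits a finite geometric-series inverse for every $\la\in\La$), and that finitely generated $\La$-modules satisfy Nakayama. For (1), $\La$-linearity of $A$ implies every $\la\in\La$, viewed as a $C$-endomorphism of $V$, commutes with $(A-\vr_j)^N$ and hence preserves $\mm{Ker}(A-\vr_j)^N$; this gives the $\La$-invariance of $\bb G(A,\vr_j)$. Moreover, $\bb G(A,\vr_j)$ is a $C$-subspace of the $\La$-finite module $V$, so it is a $\La$-finite submodule; being nonzero (since $\vr_j$ is an eigenvalue) and $\g n$ being contained in $J(\La)$, Nakayama's lemma forbids $\g n\cdot\bb G(A,\vr_j)=\bb G(A,\vr_j)$.

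For (2), the decomposition $V=\bigoplus_j\bb G(A,\vr_j)$ is now one of $\La$-modules by (1), so it respects the $\g n$-action and yields
\[
\g n V\;=\;\bigoplus_{j=1}^r\g n\bb G(A,\vr_j),\qquad \ov V\;\simeq\;\bigoplus_{j=1}^r\bb G(A,\vr_j)/\g n\bb G(A,\vr_j).
\]
Under this identification, the image $\ov{\bb G(A,\vr_j)}$ of $\bb G(A,\vr_j)$ in $\ov V$ is precisely the $j$th summand, which is nonzero by (1).

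For (3), the above decomposition is $\ov A$-stable because each $\bb G(A,\vr_j)$ is $A$-stable, and the operator $\ov A-\vr_j$ on $\ov{\bb G(A,\vr_j)}$ is induced by the nilpotent $A-\vr_j$ on $\bb G(A,\vr_j)$, hence is itself nilpotent. So $\ov{\bb G(A,\vr_j)}\subseteq\bb G(\ov A,\vr_j)$; since the $\vr_j$ are pairwise distinct, the $\bb G(\ov A,\vr_j)$ lie in direct sum inside $\ov V$, and comparison with $\ov V=\bigoplus_j\ov{\bb G(A,\vr_j)}$ forces equality summand by summand. The nonvanishing from (2) then gives $\mm{Sp}_{\ov A}=\{\vr_1,\ldots,\vr_r\}=\mm{Sp}_A$. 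The only delicate point is the one addressed in the first paragraph, namely promoting $\bb G(A,\vr_j)$ from a mere $C$-subspace to a finitely generated $\La$-submodule so that Nakayama's lemma applies; everything else is bookkeeping.
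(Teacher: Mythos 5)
Your proof is correct, and for parts (1) and (2) it follows the same path as the paper: $\La$ commutes with $A-\vr_j$ and hence preserves each $\bb G(A,\vr_j)$, Nakayama gives $\g n\po\bb G(A,\vr_j)\neq\bb G(A,\vr_j)$, and the $\La$-stable direct sum decomposition descends to $\ov V=\bigoplus_j\ov{\bb G(A,\vr_j)}$ with each summand nonzero. (You are more explicit than the paper in spelling out that $\g n$ lies in the Jacobson radical and that each $\bb G(A,\vr_j)$ is a finite $\La$-module; the paper treats these as obvious, but your care there is welcome.)

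For part (3) you take a genuinely shorter route. The paper establishes the reverse inclusion $\bb G(\ov A,\vr_{j_0})\subseteq\ov{\bb G(A,\vr_{j_0})}$ by an explicit Bezout-type identity: it picks exponents large enough that $P(T)(T-\vr_{j_0})^\mu = 1 + Q(T)\prod_{j\neq j_0}(T-\vr_j)^\mu$, applies this at $\ov A$ to a generalized $\vr_{j_0}$-eigenvector, and reads off membership in $\ov{\bb G(A,\vr_{j_0})}$. You instead observe that the containments $\ov{\bb G(A,\vr_j)}\subseteq\bb G(\ov A,\vr_j)$ together with the direct-sum decomposition $\ov V=\bigoplus_j\ov{\bb G(A,\vr_j)}$ and the independence of the $\bb G(\ov A,\vr_j)$ force equality summand by summand, by comparing dimensions of the finite-dimensional space $\ov V$; the nonvanishing from (2) then excludes any extra eigenvalue of $\ov A$. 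Your argument avoids the polynomial manipulation entirely and is, to my mind, cleaner; what the paper's Bezout argument buys is an element-level description that does not invoke finite-dimensionality, but that generality is not needed here. Both are valid.
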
 
\begin{proof}
 By definition,  
\[
\bb G(A,\vr_j)  =\bigcup_n\, \mm{Ker}\,(A-\vr_j\id)^{n},
\]  
so that for every $\la\in\La$, we have      $\la \bb G(A,\vr_j)\subset \bb G(A,\vr_j)$. 
 Since  $\bb G(A,\vr_j)\not=0$, we know that $\g n\po \bb G(A,\vr_j)\not=\bb G(A,\vr_j)$. This establishes (1).  To prove (2), we note that $\g nV=\oplus_j \g n\bb G(A,\vr_j)$ and hence $\bb G(A,\vr_j)/\g n\bb G(A,\vr_j)\stackrel \sim\to\ov{\bb G(A,\vr_j)}$. Also, as a consequence, we  arrive at the direct sum decomposition
\begin{equation}\label{23.09.2020--1}
\ov V=\ov{\bb G(A,\vr_1) } \op\cdots\op \ov{\bb G(A,\vr_r)}.
\end{equation}
 Nilpotence of  $\ov A-\vr_j\id$ when restricted to $\ov{\bb G(A,\vr_j)}$ 
now shows that  $\vr_j$ is the only eigenvalue of $\ov A $ on $\ov{\bb G(A,\vr_j)}$ and   that
\[
\ov {\bb G(A,\vr_j)}\subset  \bb{G}(\ov A,\vr_j).
\]
   Let us fix $j_0\in\{1,\ldots,r\}$ and show that  $\ov{\bb G(A,\vr_{j_0})}\supset\bb{G}(\ov A,\vr_{j_0})$. Suppose that 
 $\ov w\in\ov V$ is annihilated by $(\ov A-\vr_{j_0}\id)^m$ and write it as $\ov v_1+\cdots+\ov v_r$ with $\ov v_j\in\ov{\bb G(A,\vr_j)}$. Since $\ov v_j\in\ov{  \bb G(A,\vr_j)}$, there exists  $n_j\in\NN$ such that $(\ov A-\vr_j\id)^{n_j}(\ov v_j)=0$. 
 We now chose   $\mu=\max\{m ,n_1,\ldots,n_r\}$ and then  find   $P,Q\in C[T]$ such that 
\[
P(T)\po(T-\vr_{j_0})^\mu=1+Q(T)\po\prod_{j\not={j_0}}(T-\vr_j)^\mu. 
\]
Hence, 
\[0=
 \ov w+Q(\ov A)\po \prod_{j\not={j_0}}(\ov A-\vr_j\id)^\mu(\ov w).
\]
Now 
\[Q(\ov A)\po \prod_{j\not={j_0}}(\ov A-\vr_j\id)^\mu( \ov w )=\underbrace{Q(\ov A)\po \prod_{j\not={j_0}}(\ov A-\vr_j\id)^\mu(\ov v_{j_0})}_{\in\ov  {\bb G(A,\vr_{j_0})} },
\]which shows $\ov w\in \ov  {\bb G(A,\vr_{j_0})}$. Finally, \eqref{23.09.2020--1} is the decomposition of $\ov V$ into generalized eigenspaces. 
\end{proof}

The previous result also allows us to grasp the utility of our definition of exponents. 

\begin{cor}\label{17.05.2020--1}
\begin{enumerate}
\item Let $\La$ be a $C$-algebra which is a finite dimensional vector space and $\g n\subset \La$ a nilpotent ideal. Let $(\cm,\na)\in\mclog(\pos Cx/C)_{(\La)}$ and define $\cm|_{\g n}=\cm/\g n$. Then $\na$ gives rise to $\na|_{\g n}:\cm|_{\g n}\to \cm|_{\g n}$ and the couple $(\cm|_{\g n},\na|_{\g n})$ is an object of $\mclog(\pos Cx/C)_{(\La/\g n)}$ which has the same set of exponents as $(\cm,\na)$.
\item 
Let $(\cm,\na)\in\mclog(\pos Rx/R)$ and $k\in\NN$ be given. 
Define $\cm|_k:=\cm/{\g r}^{k+1}$. Then, this is a $\pos Cx$-module of finite type (since it is a finite $\pos {R_k}x$--module). 
Let $\na|_k:\cm|_k\to\cm|_k$ be induced by $\na$. Then $(\cm|_k,\na|_k)$ is an object of $\mclog(\pos Cx/C)_{(R_k)}$ and $\mm{Exp}(\na)=\mm{Exp}(\na|_k)$.
\end{enumerate}  \qed
\end{cor}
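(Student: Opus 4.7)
The plan is to deduce both claims directly from Proposition \ref{10.07.2019--1}, which asserts that the spectrum of a $C$-linear endomorphism on a finite-dimensional $C$-space is unaffected by quotienting by a nilpotent ideal of a commuting finite-dimensional $C$-algebra of endomorphisms. Everything else will amount to bookkeeping identifications between quotients.

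For claim (1), I would first verify the categorical assertion. Since the $\La$-action on $\cm$ consists of horizontal $\pos Cx$-linear endomorphisms, it preserves $x\cm$ and commutes with $\na$; hence it descends to $\cm|_{\g n}=\cm/\g n\cm$, commutes with the induced derivation $\na|_{\g n}$, and factors through $\La/\g n$. This places $(\cm|_{\g n},\na|_{\g n})$ in $\mclog(\pos Cx/C)_{(\La/\g n)}$. To equate exponents, set $V=\cm/x\cm$, a finite-dimensional $C$-space since $\cm$ is finite over $\pos Cx$, equipped with the induced $\La$-action and with the $\La$-linear endomorphism $A:=\mm{res}_\na$. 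Under the canonical isomorphism $\cm|_{\g n}/(x)\simeq V/\g n V$, the residue $\mm{res}_{\na|_{\g n}}$ corresponds to the reduced endomorphism $\ov{A}$. Proposition \ref{10.07.2019--1}(3) then gives $\mm{Sp}_{A}=\mm{Sp}_{\ov{A}}$, which is precisely $\mm{Exp}(\na)=\mm{Exp}(\na|_{\g n})$.

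For claim (2), the $R$-action on $\cm$ automatically factors through $R_k$ after passing to $\cm|_k=\cm/\g r^{k+1}\cm$; since $R_k$ is a finite-dimensional $C$-algebra, $\cm|_k$ is a finite $\pos Cx$-module, and $(\cm|_k,\na|_k)$ belongs to $\mclog(\pos Cx/C)_{(R_k)}$. By Definition \ref{28.06.2021--6}, the exponents of $(\cm,\na)$ are the eigenvalues of $\ov{\mm{res}}_\na$ on $\cm/(\g r,x)\cm$, while the exponents of $(\cm|_k,\na|_k)$, interpreted in the classical sense of Section \ref{08.04.2020--3}, are the eigenvalues of $\mm{res}_{\na|_k}$ on $\cm|_k/(x)$. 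Applying claim (1) to $(\cm|_k,\na|_k)$ with $\La=R_k$ and the nilpotent ideal $\g n=\g r/\g r^{k+1}\subset R_k$ identifies $\ov{\mm{res}}_\na$ with the reduction of $\mm{res}_{\na|_k}$ modulo $\g n$, so the two spectra agree.

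The only genuine obstacle will be carrying out the various identifications of quotients precisely, in particular the isomorphism $\cm|_{\g n}/(x)\simeq (\cm/x\cm)/\g n(\cm/x\cm)$ and the intertwining of residues; the substantive spectral input is entirely packaged in Proposition \ref{10.07.2019--1}.
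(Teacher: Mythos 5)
Your proof is correct and follows exactly the route the paper intends: the corollary is stated with a \qed immediately after Proposition \ref{10.07.2019--1}, and the substantive content is indeed that proposition's assertion $\mm{Sp}_{\ov A}=\mm{Sp}_A$ applied to $V=\cm/x\cm$ with $A=\mm{res}_\na$, after the routine identifications of quotients you spell out. Your deduction of part (2) from part (1) by taking $\La=R_k$ and $\g n=\g r/\g r^{k+1}$ is the expected bookkeeping step.
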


Another useful consequence of Proposition \ref{10.07.2019--1} is
\begin{cor}[Lifting of Jordan decomposition]\label{10.07.2019--2}Let $V$ be an  $R$-module and   $A:V\to V$ be an $R$-endomorphism. Denote by $\ov A:\ov V\to \ov V$ the $C$-linear endomorphism obtained by reducing $A$ modulo $\g r$. 

Then, there exist   $R$-submodules $\{V(\vr),\,:\,\vr\in
\mm{Sp}_{\ov A}\}$ of $V$ enjoying the following properties: 
\begin{enumerate}[(1)]\item The $R$-module $V$ is the  direct sum of $\{V(\vr)\,:\,\vr\in\mm{Sp}_{\ov A}\}$. 
\item Each $V(\vr)$ is stable under $A$.
\item If $\ov{V(\vr)}$ stands for the image of $V(\vr)$ in $\ov V=V/\g rV$,   then $\ov {V(\vr)}=\bb G( \ov A,\vr)$.
\end{enumerate}
In addition, if $V$ is free, then each $V(\vr)$ is also free. 
\end{cor}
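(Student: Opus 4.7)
The plan is to combine Proposition \ref{10.07.2019--1} applied level-by-level along the tower $R_k = R/\g r^{k+1}$ with the completeness of $V$. I take for granted that $V$ is finite over $R$ (otherwise $\mm{Sp}_{\ov A}$ need not be finite), so that $\ov V=V/\g rV$ is a finite-dimensional $C$-space and $\mm{Sp}_{\ov A}$ is a finite set.

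First, for each $k\ge0$, set $V_k:=V/\g r^{k+1}V$ and let $A_k:V_k\to V_k$ be the induced endomorphism; then $R_k$ is a finite-dimensional $C$-algebra with nilpotent maximal ideal $\g r_k=\g r/\g r^{k+1}$, and $V_k$ is a finite $R_k$-module. Proposition \ref{10.07.2019--1}, applied with $\Lambda=R_k$, $\g n=\g r_k$, $V=V_k$, $A=A_k$, gives for every $\vr\in\mm{Sp}_{\ov A}$ an $R_k$-submodule $V_k(\vr):=\bb G(A_k,\vr)$ of $V_k$, stable under $A_k$, satisfying $V_k=\bigoplus_{\vr\in\mm{Sp}_{\ov A}}V_k(\vr)$ and $\ov{V_k(\vr)}=\bb G(\ov A,\vr)$ in $V_0=\ov V$; in particular $\mm{Sp}_{A_k}=\mm{Sp}_{\ov A}$ does not change with $k$. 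The transition map $\pi_k:V_{k+1}\to V_k$ is $R$-linear and $A$-equivariant, hence sends $V_{k+1}(\vr)$ into $V_k(\vr)$; given $w\in V_k(\vr)$, any lift $v\in V_{k+1}$ decomposes as $v=\sum_{\vr'}v_{\vr'}$, and uniqueness of the decomposition of $w$ forces $\pi_k(v_\vr)=w$, so each transition $V_{k+1}(\vr)\to V_k(\vr)$ is surjective.

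Second, set $V(\vr):=\lip_k V_k(\vr)$. Since $V$ is finite over the complete local noetherian ring $R$, one has $V\simeq\lip_k V_k$ \cite[Theorem 8.7]{matsumura89}; because $\mm{Sp}_{\ov A}$ is finite, the inverse limit commutes with the direct sum and we obtain $V=\bigoplus_\vr V(\vr)$, proving (1). Each $V(\vr)$ inherits $R$- and $A$-stability from its finite-level approximations, settling (2). For (3), surjectivity of each transition $V_{k+1}(\vr)\to V_k(\vr)$ (a Mittag-Leffler condition) implies that $V(\vr)\to V_0(\vr)=\bb G(\ov A,\vr)$ is surjective, while the description $\ov{V(\vr)}\subset V_0(\vr)$ is automatic from the compatibility established above.

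For the final clause, assume $V$ is free. Then each $V(\vr)$ is a direct summand of the finite free $R$-module $V$, hence a finitely generated projective $R$-module; projective finite modules over the local ring $R$ are free, so $V(\vr)$ is free, of rank $\dim_C\bb G(\ov A,\vr)$. The only delicate point in the argument is step two: one must know that the spectra at the various levels coincide (to obtain compatible finite direct sum decompositions indexed by a fixed set) and that the transitions in the tower are surjective; both are guaranteed by Proposition \ref{10.07.2019--1}, so the limit procedure is essentially forced once that result is in hand.
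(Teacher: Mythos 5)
Your proposal is correct and follows essentially the same route as the paper: apply Proposition \ref{10.07.2019--1} at each truncation $R_k$ to obtain compatible decompositions into generalized eigenspaces with stable spectrum, verify the transition maps are surjective, define $V(\vr)$ as the inverse limit, and conclude freeness from projectivity over the local ring $R$. The only cosmetic difference is that you deduce $V=\bigoplus_\vr V(\vr)$ by commuting the inverse limit with a finite direct sum, whereas the paper obtains the same via an injectivity argument plus Nakayama's lemma; both work.
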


\begin{proof}Let  $\vr$ be fixed. For a given $k\in\NN$, let $A_k:V_k\to V_k$ be the $R_k$-linear endomorphism obtained by reducing $A$ modulo $\g r^{k+1}$. The eigenvalues of $A_k$ shall always mean those of the associated $C$-linear endomorphism of $V_k$. Applying Proposition \ref{10.07.2019--1} to the case $\La=R_{k+1}$ and $\g n= \g r^{k+1}\po R_{k+1}$, we obtain  that $\mm{Sp}_{A_{k+1}}=\mm{Sp}_{A_{k}}$. By induction, $\mm{Sp}_{A_{k}}=\mm{Sp}_{A_0}=\mm{Sp}_{\ov A}$.  
In addition, we also know that the canonical arrow 
\[
\bb G(A_{k+1},\vr)\aro \bb G(A_k,\vr)
\]
is a surjective morphism of $R_{k+1}$-modules whose kernel is $\g r^{k+1}\bb G(A_{k+1},\vr)$. 
Now, we define 
\[
V(\vr) = \lip_k\bb G(A_k,\vr)
\]
which is considered as an $R=\lip_kR_k$-module. According to Proposition \ega{0}{I}{Proposition 7.2.9, p.65}, the natural projection $V(\vr)\to\bb G(A_k,\vr)$ is surjective and has kernel $\g r^{k+1}V(\vr)$. 

Using the inclusions $\bb G(A_k,\vr)\to V_k$, we obtain an injective arrow of $R$-modules 
\[
u:\bigoplus_\vr V(\vr) \aro  \lip_kV_k(\simeq V). 
\]
In addition, reducing $u$ modulo $\g r$ and employing the fact that $V(\vr)/\g rV(\vr)\simeq \bb G(A_0,\vr)$, Nakayama's Lemma \cite[Theorem 2.2, p.8]{matsumura89} tells us that $u$ is surjective. 

The verification of the final assertion is clear: Because $V(\vr)$ is a direct summand of $V$, we can infer that $V(\vr)$ is projective and of finite type, hence free. 
\end{proof}

In case the module $V$ appearing in the statement of Corollary \ref{10.07.2019--2} is     {\it free}, we have the (probably well-known) consequence:

\begin{cor}\label{06.07.2020--1}Let $A\in\mm M_n(R)$ be given and denote by $\{\vr_1,\ldots,\vr_r\}$ the spectrum of $\ov A\in\mm M_n(C)$. Then, there exists
\begin{enumerate}[(1)]
\item $P\in \GL_n(R)$, 
\item a partition $n=n_1+\cdots+n_r$ and 
\item matrices 
\[
U(1)\in\mm M_{n_{1}}(R),\ldots, U(n_r)\in \mm M_{n_r}(R)
\]
\end{enumerate}
such that 
\[
P^{-1}AP=\left(\begin{array}{c|c|c}U(1) & 0& 0\\\hline0&\ddots&0
\\
\hline0&0&U(n_r)
\end{array}\right),
\]
and, for every $i$, the image of $U(n_i)$ in $\mm M_{n_i}(C)$ is a generalized Jordan matrix with eigenvalue $\vr_i$.\qed
\end{cor}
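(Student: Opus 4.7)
The plan is to reduce this statement directly to Corollary \ref{10.07.2019--2} applied to $V = R^n$ with the endomorphism $A$, and then perform a straightforward change of basis within each generalized-eigenspace summand.

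First I would invoke Corollary \ref{10.07.2019--2}: since $R^n$ is free, it provides a decomposition $R^n = \bigoplus_{i=1}^{r} V(\vr_i)$ into $A$-stable \emph{free} $R$-submodules such that the reduction $\ov{V(\vr_i)}$ coincides with the generalized eigenspace $\bb G(\ov A, \vr_i)$. Setting $n_i := \dim_C \bb G(\ov A, \vr_i)$, we have $n = n_1 + \cdots + n_r$ and $V(\vr_i) \simeq R^{n_i}$ (because $V(\vr_i)$ is finitely generated projective over the local ring $R$, and its reduction mod $\g r$ has dimension $n_i$).

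Next, for each $i$, I would fix a $C$-basis $\ov e_1^{(i)}, \ldots, \ov e_{n_i}^{(i)}$ of $\bb G(\ov A, \vr_i)$ in which the restriction of $\ov A$ is in generalized Jordan form with sole eigenvalue $\vr_i$; this is just the classical Jordan normal form over the algebraically closed field $C$. Using Nakayama's lemma (\cite[Theorem 2.2]{matsumura89}), I would lift these to an $R$-basis $e_1^{(i)}, \ldots, e_{n_i}^{(i)}$ of the free module $V(\vr_i)$. Let $U(i) \in \mm M_{n_i}(R)$ denote the matrix of $A|_{V(\vr_i)}$ in this basis.

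Finally, concatenating these bases across $i = 1, \ldots, r$ yields an $R$-basis of $R^n$; let $P \in \GL_n(R)$ be the matrix whose columns are these vectors expressed in the standard basis. By construction, $P^{-1} A P$ is block diagonal with blocks $U(1), \ldots, U(r)$, and each $U(i)$ reduces modulo $\g r$ to the matrix of $\ov A|_{\bb G(\ov A, \vr_i)}$ in the chosen Jordan basis, hence is a generalized Jordan matrix with eigenvalue $\vr_i$. There is essentially no obstacle here; all the substance is absorbed by Corollary \ref{10.07.2019--2}, and what remains is routine linear algebra together with one application of Nakayama's lemma to lift a basis from the residue field to $R$.
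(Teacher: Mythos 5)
Your proof is correct and matches the paper's intent exactly: the paper leaves Corollary~\ref{06.07.2020--1} as an immediate consequence of Corollary~\ref{10.07.2019--2} (marking it with \qed and no written proof), and your argument—applying that corollary to $V=R^n$, lifting a Jordan basis of each $\bb G(\ov A,\vr_i)$ to an $R$-basis of the free summand $V(\vr_i)$ via Nakayama, and concatenating—is precisely the routine verification the authors omit.
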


\begin{rmks}\begin{enumerate}[(a)] \item Corollary \ref{06.07.2020--1} should be compared with Theorem 25.1 in \cite{wasow76}. In fact, it is not difficult to show that this result holds under the weaker assumption that $R$ is only strictly Henselian. Indeed, the Hensel property allows us to lift the factorization of the characteristic polynomial of $A$ and one proceeds by showing that the kernels of the various factors evaluated at $A$ produce a direct sum decomposition.   
\item There is a substancial literature on the problem of similarity of matrices over rings, see e.g. \cite{guralnick81} and references in there. 
\end{enumerate}
\end{rmks}

Once in possession of these properties, we can follow the shearing technique in \cite{wasow76} to prove: 

\begin{thm}\label{25.05.2020--1} Let $(\cm,\na_\cm)\in\mclog(\pos Rx/R)$ be such that $\cm$ is a \textbf{free} $\pos Rx$-module and let $(M,\na_M)$ be the regular-singular connection associated to $(\cm,\na_\cm)$. Then, there exists
an object $(W,B)\in\bb{Eul}_R$, with $W$ a free $R$-module,  such that $(M,\na_M)\simeq\ga eul(W,B)$. In addition,  the eigenvalues of the endomorphism of $W/(x,\g r)$ defined by $B$ belong all to $\tau$.

Otherwise said, consider a differential system 
\[
\vt\bm y=A\bm y
\]
defined by  $A\in\mm M_r(\pos Rx)$. There exists $P\in\GL_r(\lau Rx)$ such that, writing $\bm y=P\bm z$, we arrive at the system 
\[
\vt\bm z= B\bm z
\]
in which   $B$ belongs to $\mm M_r(R)$ and  its  image  in $\mm M_r(C)$ only has eigenvalues lying in $\tau$. 
\end{thm}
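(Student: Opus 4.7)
The plan is to reduce to Theorem \ref{16.05.2020--5} by means of an iterative shearing procedure modeled on Wasow \cite[Ch.~17]{wasow76}, with Corollary \ref{06.07.2020--1} playing the role of the classical block decomposition over $C$. Writing the given system as $\vt\bm y = A\bm y$ with $A\in \mm M_r(\pos Rx)$, the goal is to construct $P\in\GL_r(\lau Rx)$ such that $B:=P^{-1}(AP-\vt P)\in\mm M_r(R)$ and $\mm{Sp}_{\ov B}\subset\tau$; freeness of $W:=R^r$ is then automatic, and this furnishes the isomorphism $(M,\na_M)\simeq\ga eul(W,B)$.

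First I would apply Corollary \ref{06.07.2020--1} to $A(0)\in\mm M_r(R)$, obtaining $P_1\in\GL_r(R)$ which block-diagonalizes $A(0)$ into blocks $U(1),\ldots,U(s)$ whose reductions modulo $\g r$ are generalized Jordan matrices with distinct eigenvalues $\vr_1,\ldots,\vr_s\in C$. Substituting $A\leftarrow P_1^{-1}AP_1\in\mm M_r(\pos Rx)$, the off-diagonal blocks of $A(x)$ now lie in $x\cdot \mm M(\pos Rx)$. Next, whenever two distinct eigenvalues $\vr_i,\vr_j$ belong to the same coset modulo $\ZZ$---say $\vr_j=\vr_i+k$ with $k\in\NNo$---I would apply the shearing $\mm{diag}(I,\ldots,xI_{(j)},\ldots,I)$, which multiplies the $(j,\cdot)$ off-diagonal blocks by $x$, divides the $(\cdot,j)$ off-diagonal blocks by $x$, and replaces $A_{jj}(x)$ by $A_{jj}(x)-I$. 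Thanks to the divisibility by $x$ of the off-diagonal blocks just established, the shearing preserves membership in $\mm M_r(\pos Rx)$, while its effect on $\mm{Sp}_{\ov{A(0)}}$ is to replace $\vr_j$ by $\vr_j-1$ (leaving all $\ZZ$-cosets invariant). Re-applying Corollary \ref{06.07.2020--1} restores the block-diagonal form at $x=0$, after which the shearing step may be repeated. A suitable invariant---for instance $\sum_\al(\max_i\vr_i^{(\al)}-\min_i\vr_i^{(\al)})$ summed over cosets $\al$---strictly decreases, so after finitely many iterations every coset has collapsed to a single common value. A final scalar shearing $x^{k_\al}I$ on each resulting block then shifts that common value to the unique representative $\tau_\al\in\tau$, yielding $A'\in\mm M_r(\pos Rx)$ with $\mm{Sp}_{\ov{A'(0)}}\subset\tau$.

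Since $\tau\to C/\ZZ$ is injective, no two distinct elements of $\mm{Sp}_{\ov{A'(0)}}$ differ by a nonzero integer, so Theorem \ref{16.05.2020--5} yields $P_3\in\GL_r(\pos Rx)$ with $P_3^{-1}(A'P_3-\vt P_3)=B\in\mm M_r(R)$ and $\mm{Sp}_{\ov B}\subset\tau$. The composition of all the gauge transformations assembles into the required $P\in\GL_r(\lau Rx)$. The main obstacle is the book-keeping for the iterative shearing: at each step one must verify that (i) the divisibility of the off-diagonal blocks by $x$ is restored by Corollary \ref{06.07.2020--1} so that holomorphicity in $\pos Rx$ is preserved, and (ii) the chosen invariant strictly decreases, guaranteeing termination. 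Both points are entirely standard for scalar coefficients \cite[Ch.~17]{wasow76}; the essential novelty is the systematic use of Corollary \ref{06.07.2020--1} to transfer the spectral decomposition to the $R$-linear setting.
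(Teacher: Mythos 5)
Your proposal is correct and takes essentially the same route as the paper: both proofs adapt Wasow's shearing argument, use Corollary \ref{06.07.2020--1} in place of the classical Jordan block decomposition to organize the residue at $x=0$ into blocks whose reductions mod $\g r$ have distinct eigenvalues, iterate shearing substitutions $x^{\pm1}$ on suitable blocks to bring all exponents into $\tau$, and then invoke Theorem \ref{16.05.2020--5} to reduce to a constant system. The only cosmetic difference is that the paper works with a coarser two-block partition (one chosen eigenvalue $\vr$ against all the rest) and shears a single step at a time, leaving termination implicit as in the classical case, whereas you keep the full block decomposition and spell out an explicit decreasing invariant.
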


\begin{proof}
Because of Nakayama's Lemma \cite[Theorem 2.2, p.8]{matsumura89} (and the fact that $\pos Rx$ is local)  a set of elements of $\cm$ which is mapped to a basis of $\cm/(x)$ is necessarily a basis of $\cm$.  
According to Corollary \ref{06.07.2020--1}, there exists a basis $\bm m=\{m_i\}_{i=1}^r$ of $\cm$ such that the basis 
\[
\ov{\bm m}=\{m_i+(x) \}_{i=1}^r\]
 of $\cm/(x)$ has the following properties:
\begin{enumerate}[(a)]
\item the matrix of $\mm{res}_\cm :\cm/(x)\to\cm/(x)$ with respect to $\bm m$ has the form 
\[
\left(\begin{array}{c|c}J_{11} & 0\\\hline0&J_{22}
\end{array}\right)
\] 
where $J_{11}  \in\mm{M}_{q}(R)$ and $J_{22}\in\mm{M}_{r-q}(R)$. (Here $q\in\{1,\ldots,r\}$ is a positive integer. In case  $q=r$, we only say that $\mm{res}_\cm=J_{11}$.)

\item If $\ov J_{11}\in\mm M_q(C)$ and $\ov J_{22}\in\mm M_{r-q}(C)$ stand for the images of $J_{11}$ and $J_{22}$ respectively, then     $\mm{Sp}_{\ov J_{11}}=\{\vr\}$ and    $\vr\not\in\mm{Sp}_{\ov J_{22}}$.
\end{enumerate}

 Hence, the matrix   of $\na_\cm$ with respect to     $\bm m$ 
is 
\[
\left(\begin{array}{c|c}J_{11}+ x\Ps_{11} & x\Ps_{12}\\\hline x\Ps_{21}&J_{22}+x\Ps_{22}
\end{array}\right),
\] 
where $\Ps_{11}\in\mm M_q(\pos Rx)$ and $\Ps_{22}\in\mm M_{r-q}(\pos Rx)$. 

Let us now define $\bm m'=\{m_1',\ldots,m_r'\}\subset M$ by
\[
m'_j=\left\{
\begin{array}{ll} xm_j,&\text{if $j\in\{1,\ldots,q\}$},\\
m_j,&\text{if $j\in\{q+1,\ldots,r\}$.}
\end{array}\right.
\]
which is to say that the base-change matrix from $\bm m$ to $\bm m'$ is 
\[\left(\begin{array}{c|c} x  & 0\\\hline 0& I
\end{array}\right).
\]
Clearly, 
\[\cm'=\sum_{j=1}^r \pos Rx\po m'_j\]
is a free $\pos Rx$-module such that $\cm'[1/x]=M$. In addition, the matrix of $\na_M$ with respect to $\bm m'$ is 
\[
\left(\begin{array}{c|c}1/x&0\\\hline0&I
\end{array}\right)\po \left(\begin{array}{c|c}x&0\\\hline0&0
\end{array}\right)+\left(\begin{array}{c|c}1/x&0\\\hline0&I
\end{array}\right)\po \left(\begin{array}{c|c}J_{11}+ x\Ps_{11} & x\Ps_{12}\\\hline x\Ps_{21}&J_{22}+x\Ps_{22}
\end{array}\right)\po\left(\begin{array}{c|c} x&0\\\hline0&I
\end{array}\right) ,
\]
which equals
\[
\left(\begin{array}{c|c}I&0\\\hline0&0
\end{array}\right)+\left(\begin{array}{c|c}J_{11} + x\Ps_{11} &  \Ps_{12}\\\hline x^2\Ps_{21}&J_{22}+x\Ps_{22}
\end{array}\right).
\]
Hence, with respect to the basis $\{m'_j+(x)\}$ of $\cm'/(x)$, we have 
\[
\mm{res}_{\cm'} = \left(\begin{array}{c|c} J_{11}+I  &  \Ps_{12}  \\\hline 0  & J_{22} 
\end{array}\right)
\]
and the exponents of $\cm'$ are $\{\vr+1\}\cup\mm{Sp}_{\ov J_{22}}$. Analogously, if we define 
   $\bm m''=\{m_1'',\ldots,m_r''\}\subset M$ by
\[
m''_j=\left\{
\begin{array}{ll} x^{-1}m_j,&\text{if $j\in\{1,\ldots,q\}$},\\
m_j,&\text{if $j\in\{q+1,\ldots,r\}$,}
\end{array}\right.
\]
and 
\[
\cm''=\sum_{j=1}^r\pos Rx\po m_j'',
\]
we obtain a logarithmic model $(\cm'',\na_M)$ such that $\mm{Exp}_{\cm''}=\{\vr-1\}\cup\mm{Sp}_{
\ov J_{22}}$. 

By induction, we are able to find a logarithmic model $(\cm^+,\na^+)$ of $M$ such that $\cm^+$ is free and $\mm{Exp}_{\cm^+}\subset\tau$. Theorem \ref{16.05.2020--5} now finishes the proof. 
\end{proof}

\begin{rmk}\label{09.05.2022--2}
As mentioned in Remark \ref{09.05.2022--1}, our definition of exponents can be inadequate in certain contexts. 
Suppose that we set out to obtain a ``normalisation'' result as Theorem \ref{25.05.2020--1} in the following setting. Let $\g o$ be a noetherian  $C$-algebra which is also a domain and define $\mcrs(\lau {\g o} x/\g o )$ along the lines of Definition \ref{08.05.2021--1}-(3).  Given $(M,\na)\in\mcrs(\lau {\g o} x/\g o )$ such that $M$ is a {\it free} $\g o$-module, is it possible to find an analogue of  the combination of Theorem \ref{25.05.2020--1} and Corollary \ref{06.07.2020--1}?

Here we recommend \cite[8.3-4]{andre-baldassarri-cailotto20}.  Their exponents \cite[7.6.1]{andre-baldassarri-cailotto20} are elements in an extension of $\mm{Frac}(\g o)$ following the classical construction (cf. Theorem \ref{03.03.2020--1} and Theorem \ref{02.03.2020--3}).  From there, Andr\'e, Baldassarri and Cailotto go on to show that the exponents of $(M,\na)$ do indeed belong to some integral extension $\g o'$ of $\g o$ and that a ``Jordan decomposition''
 can be  achieved  over  $\lau {\g o'}x$  provided that the differences of exponents are in $C$ \cite[8.4.2]{andre-baldassarri-cailotto20}. This     gives another   approach to Theorem \ref{25.05.2020--1}.
\end{rmk}

We end this section with a capital result, Theorem \ref{29.06.2020--1}, concerning the structure of the  $\lau Rx$--module underlying an object of $\mc(\lau Rx/R)$:

\begin{thm}\label{29.06.2020--1}Let $(M,\na)$ be an object of $\mc(\lau Rx/R)$. Then, $M$ is flat as an $\lau Rx$--module if and only if $M$ is $R$-flat. 
\end{thm}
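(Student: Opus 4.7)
The plan is to treat the two implications separately; the first is formal, while the second rests on Andr\'e's theorem.

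\textbf{$(\Rightarrow)$:} The ring $\pos Rx$ is $R$-flat (faithfully flat, in fact, since $R\to\pos Rx$ is a local homomorphism of noetherian local rings with closed fibre $\pos Cx$), and localization at $x$ preserves flatness; hence $\lau Rx$ is $R$-flat and transitivity of flatness gives the implication.

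\textbf{$(\Leftarrow)$:} Assume $M$ is $R$-flat. Since $\pos Rx$ is noetherian ($R$ being so) and $\lau Rx$ is one of its localizations, $\lau Rx$ is itself noetherian, so $M$ is finitely presented over $\lau Rx$; for such modules over a noetherian ring, flatness is equivalent to local freeness on $\spc \lau Rx$. I would establish the latter fibre by fibre over $\spc R$. Specifically, the standard fibre-wise criterion for flatness (\textup{EGA}~IV$_3$, 11.3.10) combined with the assumed $R$-flatness of $M$ reduces the problem to showing, for every $\g p\in\spc R$, that the fibre $M(\g p):=M\otimes_R\bm k(\g p)$ is flat over the fibre ring $A(\g p):=\lau Rx\otimes_R\bm k(\g p)$. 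The connection $\na$ descends to a $\bm k(\g p)$-linear connection on the finite $A(\g p)$-module $M(\g p)$ relative to the induced derivation $\vt$ on $A(\g p)$, and at this point Y.~Andr\'e's theorem---to be reproved in Remark~\ref{09.09.2020--1}---delivers the required flatness of $M(\g p)$ over $A(\g p)$.

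The hard part is precisely the obstacle foreshadowed in the introduction: the fibre ring $A(\g p)$ is neither a field nor of finite type over $\bm k(\g p)$, because the canonical injection $A(\g p)\hookrightarrow\lau{\bm k(\g p)}x$ is typically strict. For instance, with $R=\pos Ct$ and $\g p=(0)$, the ring $A(\g p)=\lau Rx[t^{-1}]$ consists of Laurent series in $x$ whose coefficients have \emph{bounded} denominators in $t$, whereas $\lau Kx$ with $K=\lau Ct$ allows arbitrarily high $t$-denominators coefficient by coefficient. Consequently, the naïve observation that a finite module with connection over $\lau Kx$ ($K$ a field of characteristic zero) is automatically free---because $\lau Kx$ is itself a field---cannot be invoked. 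Andr\'e's theorem is what compensates for this failure, and, coupled with the fibre-wise criterion for flatness, concludes the proof.
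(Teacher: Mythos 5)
Your proposal correctly identifies the structure of the argument --- the forward direction is transitivity of flatness, and the backward direction rests on the fibre-by-fibre flatness criterion (EGA~IV$_3$, 11.3.10) combined with Andr\'e's theorem (Remark~\ref{09.09.2020--1}), which is indeed the route the paper takes (passing through Theorem~\ref{18.05.2020--1}). You also diagnose the right obstacle: the fibre rings $A(\g p)=\bm k(\g p)\ot_R\lau Rx$ are neither fields nor of finite type over $\bm k(\g p)$.

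There is, however, a genuine gap. Andr\'e's theorem in the form given in Remark~\ref{09.09.2020--1} does not automatically deliver flatness of $M(\g p)$: its hypothesis is that the only $\vt$-invariant ideals of $A(\g p)$ are $(0)$ and $(1)$, and you never verify this. That verification is precisely where the real work lies. Note that $\vt$-invariant ideals are abundant in the ambient ring --- e.g.\ $\g r\lau Rx$ is $\vt$-invariant --- so differential simplicity of the fibre ring is by no means formal. In the paper this is the content of Theorem~\ref{18.05.2020--1}: one reduces to $S=R/\g p$ and must show that any $\vt$-invariant ideal $I\subset\pos Sx$ extends, after inverting $x$, to an ideal coming from $S$ itself; the proof of that Claim applies the shearing/normalisation result (Theorem~\ref{25.05.2020--1}) over the complete local domain $S$ to the differential system $\vt\bm f=A\bm f$ satisfied by a tuple of generators of $I$, and then uses the stability of exponents under reduction (Proposition~\ref{10.07.2019--1}) to force those generators to be constants. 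Without this step --- which is where Theorem~\ref{25.05.2020--1} is actually consumed --- invoking Andr\'e's theorem is not justified, so the proposal as written does not close.
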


 Since the ring $\lau Rx$ is not $\g r$--adically complete and since the fibres of $\spc {\lau Rx}\to\spc R$ may fail to be   of finite type over a field,  the argument delivering Theorem \ref{29.06.2020--1}  cannot  be a direct adaptation of known results, e.g. \cite[Lemma 2.4.2, p.40]{katz90}, \cite[p.82]{dos_santos09} or \cite[Proposition 5.1.1]{duong-hai18}. (We profit to note at this point that in the proof of Proposition 5.1.1 in \cite{duong-hai18},  we need to employ  the ``fibre-by-fibre flatness criterion''   \ega{IV}{3}{11.3.10, p. 138} and not the  ``local flatness criterion''.)  We then need the following theorem, which shall also find future applications. 
  
\begin{thm}\label{18.05.2020--1}Let $M\in\mc (\lau Rx/R)$ be given.  Let $\g p$ be a prime ideal of $R$, $S$ the quotient ring $R/\g p$ and $L$ its field of fractions.   Then, the $L\ot_R\lau Rx$--module 
\[
\begin{split}
M|_{\g p}&:=(L\ot_R\lau Rx)\otu{\lau Rx}M
\end{split}
\]  is flat. 
\end{thm}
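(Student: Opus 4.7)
The plan is to reduce to the case where $R$ is a domain and $\g p=(0)$, then to prove flatness of $N := L\otimes_R M$ over $A := L\otimes_R\lau Rx$ by using the connection to rigidify the fibre dimension of $N$.

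\medskip

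For the reduction, observe $L\otimes_R\lau Rx\simeq L\otimes_S\lau Sx$ where $S = R/\g p$ is again a complete local noetherian $C$-algebra with residue field $C$, now a domain. Replacing $(R,M,\g p)$ by $(S,M/\g pM,(0))$, we reduce to showing that for $R$ a complete local noetherian \emph{domain} with fraction field $L$ and any $M\in\mc(\lau Rx/R)$, the finitely generated module $N$ is flat over the noetherian domain $A=(R\setminus\{0\})^{-1}\pos Rx[x^{-1}]$. The derivation $\vt$ extends $L$-linearly to $\partial$ on $A$, and $N$ inherits a $\partial$-compatible $L$-linear connection $\nabla'$.

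\medskip

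The structural core is the claim that \emph{the only $\partial$-stable ideals of $A$ are $(0)$ and $A$}; I defer its proof and first deduce the theorem from it. For each $c\geq 0$, the exterior power $\wedge^{c+1}N$ is a finite $A$-module carrying a connection induced from $\nabla'$, and its annihilator is $\partial$-stable: from $aw=0$ for all $w\in\wedge^{c+1}N$, Leibniz gives $\partial a\cdot w = -a\cdot\nabla'(w)=0$. By the claim, for each $c$ either $\wedge^{c+1}N=0$ or $\mm{Supp}(\wedge^{c+1}N)=\mm{Spec}(A)$. Since $(\wedge^{c+1}N)\otimes\bm k(\g q)$ vanishes iff $\dim_{\bm k(\g q)}(N\otimes\bm k(\g q))\leq c$, this dichotomy forces the fibre dimension $d(\g q):=\dim_{\bm k(\g q)}(N\otimes\bm k(\g q))$ to be a constant $n$ on $\mm{Spec}(A)$, necessarily equal to the generic rank. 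Then at every prime $\g q$ a minimal surjection $A_{\g q}^n\twoheadrightarrow N_{\g q}$ has kernel which is torsion (it vanishes after localization at $\mm{Frac}(A)$) but lies in the torsion-free module $A_{\g q}^n$ over the domain $A_{\g q}$, hence is zero; so $N_{\g q}\simeq A_{\g q}^n$. Thus $N$ is locally free, hence flat.

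\medskip

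The proof of the key claim is the main obstacle. Let $I\subseteq A$ be $\partial$-stable with $I\neq 0$, and pick $f\in I\setminus\{0\}$. Using that elements of $R\setminus\{0\}$ and powers of $x$ are units in $A$, we may assume $f\in I\cap\pos Rx$; shifting by $x^{-n_0}$, with $n_0$ the smallest index of a nonzero coefficient, we may further arrange $f=f_0+f_1 x+f_2 x^2+\cdots$ with $f_0\in R\setminus\{0\}$. Apply the Euler polynomial
\[
Q_N(\partial) := \frac{1}{(-1)^N N!}\prod_{m=1}^N(\partial - m)\in\QQ[\partial].
\]
Since $\partial x^n = n x^n$, this operator satisfies $Q_N(\partial) x^0 = 1$ and $Q_N(\partial) x^n = 0$ for $1\leq n\leq N$, so
\[
Q_N(\partial) f = f_0 + \sum_{n>N} c_n^{(N)} f_n x^n \in I\cap\pos Rx
\]
for suitable $c_n^{(N)}\in\QQ$. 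Setting $J:=I\cap\pos Rx$, we conclude $f_0\in J+x^{N+1}\pos Rx$ for every $N\geq 0$. The ring $\pos Rx$ is local (because $R$ is complete local), with maximal ideal $(\g r,x)\supseteq(x)$; Krull's intersection theorem therefore gives $\bigcap_N(J+x^{N+1}\pos Rx)=J$, whence $f_0\in J\subseteq I$. Finally $f_0\in R\setminus\{0\}\subset A^\times$, so $I=A$, proving the claim.
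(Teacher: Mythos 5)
Your proof is correct, and while it follows the paper's overall strategy---reduce to $S=R/\g p$ a domain, then show that the localization $A=L\ot_S\lau Sx$ has no $\vt$-stable ideals other than $(0)$ and $(1)$, and deduce flatness from this rigidity---it proves the key ideal-theoretic claim by a genuinely different and lighter argument. The paper's proof of that claim is heavy machinery: it regards the generators of a $\vt$-stable ideal $I\subset\pos Sx$ as solutions to a logarithmic system $\vt\bm y=A\bm y$, invokes the parametrized shearing theorem (Theorem~\ref{25.05.2020--1}), which itself rests on the lifting of Jordan decompositions over $R$ (Corollary~\ref{06.07.2020--1}) and the $R$-parametrized Euler normalization (Theorem~\ref{16.05.2020--5}), and then analyses eigenvalues of the resulting constant matrix $B$ modulo each $\g r^{k+1}$. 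Your argument replaces all of this by the elementary operator $Q_N(\partial)=\frac{1}{(-1)^N N!}\prod_{m=1}^N(\partial-m)$, which kills the coefficients of $x^1,\ldots,x^N$ and fixes the constant term, followed by Krull's intersection theorem applied to $(x)\subset(\g r,x)=\mm{rad}(\pos Rx)$; this directly shows $I=A$ without passing through the intermediate statement that $I\lau Sx$ comes from an ideal of $S$. Similarly, for the step from ``no $\partial$-stable ideals'' to flatness, you give a direct argument via annihilators of exterior powers and constancy of fibre dimension, whereas the paper cites Andr\'e (reproved in Remark~\ref{09.09.2020--1} via Fitting ideals); these two versions are essentially equivalent, since $\mm{Ann}(\wedge^{c+1}N)$ and $\mm{Fitt}_c(N)$ cut out the same closed set. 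Net effect: your route is self-contained and avoids the dependence on the substantial Section~\ref{28.06.2021--5} results that the paper's proof of this theorem relies upon, which is a real simplification; the trade-off is that the paper's heavier claim (that $I\lau Sx$ is extended from an ideal $\g a\subset S$) is slightly more informative, though not needed here.
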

\begin{proof}
Since $R[x]/\g pR[x]\simeq S[x]$, the Artin-Rees Lemma assures that $\pos Rx/\g p\pos Rx\simeq \pos Sx$ \cite[8.11, p.61]{matsumura89}; inverting $x$, we conclude that  $S\ot_R\lau Rx\stackrel\sim\to \lau Sx$.
As a consequence,  $S\ot_RM$ is an object of $\mc(\lau Sx/S)$. Hence, we only need to show that for any $N\in\mc(\lau Sx/S)$, the $L\ot_S\lau Sx$-module $L\ot_SN$ is flat. 
Using \cite[Theorem 2.5.2.1,p.713]{andre01} (see also Remark \ref{09.09.2020--1}), it is enough to show that $L\ot_S\lau Sx$ has no ideal invariant under $\vt$ other than $(0)$ and $(1)$. Let then $J\subset L\ot_S\lau Sx$ be a non-zero ideal invariant under $\vt$. Since $L\ot_S\lau Sx$ is a localization of $\pos Sx$---note that $\lau Sx$ is a localization of $\pos Sx$ and $L\ot_S\lau Sx$ is a localization of $\lau Sx$---we conclude that $J$ is the extension of  $I:=J\cap  \pos Sx$; clearly $I$ is equally stable under $\vt$. What we are looking for is the a consequence of the 

\emph{Claim.} Let  $I\subset\pos Sx$ be a $\vt$-invariant ideal. Then, there exists an ideal $\g a\subset S$
such that 
\[
\g a\po\lau Sx = I\po \lau Sx.
\]

\noindent{\it Proof.}
 Let $f_1,\ldots,f_n$ be generators of $I$. We conclude that the vector  $\bm f={}^{\top}(f_1,\ldots,f_n)$  satisfies a differential equation \[\vt\bm y= A\bm y,\] where $A\in\mm M_n(\pos Sx)$. Let us now suppose that  $\tau\cap\ZZ=\{0\}$.
There exists  
\[
P\in \mm{GL}_n(\lau Sx)
\] 
such that, if $\bm f=P\bm g$, then 
\begin{equation}\label{14.05.2020--1}
\vt \bm g=B\bm g
\end{equation} 
with $B\in\mm M_n(S)$ a matrix whose image in $\mm M_n(C)$ only has eigenvalues in $\tau$ (Theorem \ref{25.05.2020--1}).
Since $P\in\GL_n(\lau Sx)$, letting $\bm g={}^\top(g_1,\ldots,g_n)$, we have 
\[
\sum_{i=1}^n \lau S xg_i =   I\po\lau Sx.
\]
Let us now write 
\[
\bm g = \sum_{i\ge i_0} \bm g_ix^i.
\]
It then follows from \eqref{14.05.2020--1} that $B\bm g_i=i\bm g_i$ for each $i\ge i_0$. Given $k\in \NN$, let   
\[
B_k : S_k^{\op n}\aro S_k^{\op n}
\]
stand for the $C$-linear endomorphism defined by $B$. Since $\mm{Sp}_{B_k}=\mm{Sp}_{B_0}$ (cf. Proposition \ref{10.07.2019--1}) and $\mm{Sp}_{B_0}\cap\ZZ=\{0\}$, we conclude that, if $i\not=0$, then the image of $\bm g_i$ in $S_k^{\op n}$ vanishes. As $k$ is arbitrary, this implies that  $\bm g_i=0$ for $i\not=0$ and hence $\bm g\in S^n$. The ideal $\g a$ envisaged in the statement is hence obtained. 
\end{proof}
 
\begin{proof}[Proof of Theorem \ref{29.06.2020--1}]One applies the previous result and the fibre-by-fibre flatness criterion \ega{IV}{3}{11.3.10,p.138}. 
\end{proof}

\begin{rmk}\label{09.09.2020--1}We have employed above a Theorem from \cite{andre01} in order to prove Theorem \ref{18.05.2020--1}. Here is a self contained result which gives what we want. 

 Let $A$ be a ring, $\Om$ an $A$--module and   $\mm d:A\to \Om$ a derivation. Given an $A$--module $M$, we define a connection on $M$ as being an additive map $\na:M\to M\ot\Om$ such that $\na(am)=a\na(m)+m\ot\mm da$. Let $A[\Om]=A\op \Om$ and give it the structure of a ring by decreeing that $\om\om'=0$ for $\om,\om'\in\Om$. Let $\iota:A\to A[\Om]$ be the obvious inclusion and $t:A\to A[\Om]$ the map defined by $a\mapsto a+\mm da$; both are morphisms of rings. Using a connection $\na$ on $M$,  we arrive at an isomorphism of $A[\Om]$--modules 
 \begin{equation}\label{09.09.2020--2}
 A[\Om]\otu{t,A} M\arou \sim  M\otu{ A,\iota}A[\Om]
 \end{equation}
 which reduces to the identity modulo  $\Om$ \cite[Proposition 2.9]{berthelot-ogus78}. 

Let us suppose that $M$ is of finite type and let ${\rm Fitt}_r$ be the $r$th Fitting ideal of $M$ \cite[20.4]{eisenbud95}. By a fundamental property of these ideals \cite[Corollary 20.5]{eisenbud95}, the isomorphism in \eqref{09.09.2020--2} says that $t({\rm Fitt}_r)A[\Om]=\iota({\rm Fitt}_r)A[\Om]$. This implies the inclusion    
\[
\mm d({\rm Fitt}_r)\subset {\rm Fitt}_r\po\Om.
\] 
 
We say that an ideal $I\subset A$ is ${\rm d}$--invariant if ${\rm d}(I)\subset I\Om$. Therefore, imposing that   the only ${\rm d}$--invariant ideals of $A$ are $(0)$ and $(1)$ and employing \cite[Proposition 20.8]{eisenbud95}, we conclude that  either $M=0$, or $M$ is projective of constant rank. (Note that if $\spc A$ is disconnected, then there are immediately ${\rm d}$--invariant ideals other than $(0)$ and $(1)$, so  constancy of the rank is appropriate).
\end{rmk}

\section{Logarithmic models for connections from $\mcrs(\lau Rx/R)$}\label{28.06.2021--7}

Let $k\in\NN$. For each $(\cm,\na)\in \mclog(\pos Rx/R)$, the  arrow  
\[
\na:\cm/\g r^{k+1}\aro \cm/\g r^{k+1}
\]
gives rise to an object of $\mclog(\pos Cx/C)_{(R_k)}$ and this construction produces a functor 
\[\bullet|_{k}:\mclog(\pos Rx/R)\aro\mclog(\pos Cx/C)_{(R_k)}.
\]
Analogously, we obtain a functor 
\[
\bullet|_{k}:\mc(\lau Rx/R)\aro\mc(\lau Cx/C)_{(R_k)}
\]
and these two fit into a commutative diagram  (up to natural isomorphism) 
\[
\xymatrix{
\mclog(\pos Rx/R)\ar[rr]^\ga\ar[d]_{\bullet|_k} && \mc(\lau Rx/R)\ar[d]^{\bullet|_{k}}
\\
\mclog(\pos Cx/C)_{(R_k)}\ar[rr]_\ga && \mc(\lau Cx/C)_{(R_k)}.
}
\]
In particular,   if $M\in\mc(\lau Rx/R)$ is regular-singular, then 
$M|_{k}$ is also regular-singular.

\begin{thm}[Deligne-Manin models] \label{16.05.2020--1}
Any $M\in\mcrs(\lau Rx/R)$ possesses a logarithmic model $\cm$ such that, for every $k\in\NN$, the object 
\[
\cm|_{k}\in\mclog(\pos Cx/C)_{(R_k)},
\] 
enjoys the ensuing properties: 
\begin{enumerate}[(1)]
\item All its exponents lie in $\tau$. 
\item It is free in relation to $R_k$.
\item The isomorphism $\ga(\cm|_k)\simeq M|_k$ is compatible with the action of $R_k$.  
\end{enumerate} 
Put otherwise, $\cm|_k$ is a Deligne-Manin model in the sense of Theorem \ref{10.03.2020--2}. 
\end{thm}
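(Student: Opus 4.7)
The strategy is to construct $\cm$ as the $\g r$-adic limit of Deligne-Manin lattices for the truncations $M|_k$ and to verify $\ga(\cm) \simeq M$ by comparison with an ambient $x$-pure model. For each $k \in \NN$, applying Theorem~\ref{10.03.2020--2} to $M|_k \in \mcrs(\lau Cx/C)_{(R_k)}$ yields a Deligne-Manin lattice $\cm_k \in \mclog(\pos Cx/C)_{(R_k)}$; by part (4) of that theorem we may write $\cm_k = eul(V_k, A_k)$ for some $(V_k, A_k) \in \bb{End}_{R_k}$ with $V_k$ finite over $R_k$. The reduction $\cm_{k+1}/\g r^{k+1}\cm_{k+1}$ is again a Deligne-Manin lattice of $M|_k$---its exponents are preserved by Corollary~\ref{17.05.2020--1}, freeness relative to $R_k$ is inherited from the lemma in Section~\ref{28.06.2021--1}, and compatibility with $M|_{k+1} \to M|_k$ is automatic---so the uniqueness clause of Theorem~\ref{10.03.2020--2} provides canonical isomorphisms $\cm_{k+1}/\g r^{k+1}\cm_{k+1} \simeq \cm_k$, making $(\cm_k)_k$ into a projective system with corresponding compatibility $V_{k+1}/\g r^{k+1}V_{k+1} \simeq V_k$.

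Set $V := \lip_k V_k$ and $A := \lip_k A_k$; a standard Nakayama-completeness argument, using completeness of $R$ and finiteness of each $V_k$ over $R_k$, shows $V$ is a finitely generated $R$-module with $V/\g r^{k+1}V \simeq V_k$. Define
\[
\cm := eul_R(V, A) = (\pos Rx \otimes_R V,\, D_A) \in \mclog(\pos Rx/R).
\]
Then $\cm/\g r^{k+1}\cm \simeq \pos{R_k}x \otimes_{R_k} V_k \simeq \cm_k$ in $\mclog(\pos Cx/C)_{(R_k)}$, establishing properties (1)--(3). Moreover $\cm$ is $x$-pure: any $x$-torsion element reduces to an $x$-torsion element of each $x$-pure $\cm_k$, hence lies in $\bigcap_k \g r^{k+1}\cm$, which vanishes by Krull's intersection theorem in the finitely generated $\pos Rx$-module $\cm$.

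The main obstacle is establishing $\ga(\cm) \simeq M$: since $\lau Rx$ is \emph{not} $\g r$-adically complete, $M$ cannot be recovered from its truncations by naive limits. To circumvent this, fix an $x$-pure logarithmic model $\ov\cm$ of $M$ (Lemma~\ref{15.07.2020--1}), and at each level let $\ov\cm|_k^{\rm pure}$ denote the $x$-pure quotient of $\ov\cm|_k$: a $\pos Cx$-lattice of $M|_k$ with $R_k$-action whose exponents are independent of $k$. Applying Proposition~\ref{09.04.2019--1}(2)--(3) to $\id:M|_k\to M|_k$ with the two lattices $\cm_k$ and $\ov\cm|_k^{\rm pure}$ yields morphisms in $\mclog(\pos Cx/C)_{(R_k)}$ whose twist integers are \emph{uniformly bounded in $k$}---this is exactly where the constancy of exponents under reduction (Corollary~\ref{17.05.2020--1}) is decisive. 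Assembling these compatibly in the limit---allowed because $\ov\cm$ and $\cm$ are finitely generated over $\pos Rx$ and hence $\g r$-adically complete, while Proposition~\ref{09.04.2019--1}(3) guarantees uniqueness at each level---one obtains morphisms
\[
\Phi:\ov\cm \longrightarrow \cm(\delta) \qquad\text{and}\qquad \Psi:\cm \longrightarrow \ov\cm(\delta')
\]
in $\mclog(\pos Rx/R)$. Since $\ov\cm(\delta+\delta')$ is $x$-pure, the identity $\ga(\Psi\circ\Phi) = \id_M$ forces $\Psi\circ\Phi$ to coincide with the canonical inclusion $\iota:\ov\cm \hookrightarrow \ov\cm(\delta+\delta')$ as morphisms in $\mclog(\pos Rx/R)$; symmetrically, using the $x$-purity of $\cm$ established above, $\Phi\circ\Psi$ equals the canonical $\cm\hookrightarrow\cm(\delta+\delta')$. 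Inverting $x$, $\ga(\Phi)$ and $\ga(\Psi)$ are mutually inverse isomorphisms between $M$ and $\ga(\cm)$, completing the proof.
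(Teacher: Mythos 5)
Your Step 1 is essentially the paper's (the paper forms $\cm=\lip_k\cm_k$ directly using $\g r$-adic completeness of $\pos Rx$, whereas you pass through the $eul(V_k,A_k)$ description of Theorem~\ref{10.03.2020--2}(4) and take limits of the $(V_k,A_k)$; these are the same construction up to a harmless reformulation, and your observation that $\cm=\pos Rx\ot_RV$ is automatically $x$-pure is correct and pleasant).

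The problem is in Step 2, where you try to replace the paper's flatness argument by building a map $\Psi$ in the reverse direction. The maps $\Psi_k\colon\cm_k\to\ov\cm|_k^{\rm pure}(\de')$ that Proposition~\ref{09.04.2019--1}(2) hands you have targets $\ov\cm|_k^{\rm pure}$, and you assert these assemble into $\Psi\colon\cm\to\ov\cm(\de')$. But $\ov\cm$ being $x$-pure over $\pos Rx$ does \emph{not} imply that each truncation $\ov\cm|_k=\ov\cm/\g r^{k+1}\ov\cm$ is $x$-pure over $\pos{R_k}x$: the truncations can acquire new $x$-torsion (e.g.\ an element $m$ with $x m=tm$ in $R=\pos Ct$ has $x^{k+1}m\equiv0\bmod\g r^{k+1}$). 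Consequently $\ov\cm|_k^{\rm pure}$ is a genuine quotient of $\ov\cm|_k$ whose $\pos Cx$-rank may vary with $k$, the transition maps $\ov\cm|_{k+1}^{\rm pure}\to\ov\cm|_k^{\rm pure}$ need not satisfy the conditions that make $\lip_k$ well-behaved, and $\lip_k\ov\cm|_k^{\rm pure}$ is in general \emph{not} $\ov\cm$. So there is no $\pos Rx$-module with connection of which the $\Psi_k$ are the truncations, and $\Psi$ does not exist by the argument given. Relatedly, your claim that the exponents of $\ov\cm|_k^{\rm pure}$ are independent of $k$ is unjustified: they are only a (possibly $k$-dependent) subset of $\mm{Exp}(\ov\cm)$, because passing to a quotient may drop eigenvalues of the residue. (This last point is actually harmless for bounding $\de'$, but it signals that $\ov\cm|_k^{\rm pure}$ is not controlled by $\ov\cm$ in the way your argument needs.)

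The difficulty you are trying to sidestep is genuine: the map $\Phi[x^{-1}]\colon M\to\ga(\cm)$ induces an isomorphism modulo every $\g r^{k+1}$, but since $\lau Rx$ is not $\g r$-adically separated this does \emph{not} formally give injectivity or surjectivity of $\Phi[x^{-1}]$. The paper's Step 2 addresses exactly this: it constructs only $\ps=\Phi$, observes that $\ps[x^{-1}]$ is an isomorphism on a neighbourhood of the closed fibre of $\spc\lau Rx\to\spc R$ (via EGA $0_{\rm I}$, 7.3.7), and then kills the kernel and cokernel globally by invoking the generic flatness of objects of $\mc(\lau Rx/R)$ (Theorem~\ref{18.05.2020--1}, which rests on Andr\'e's result in Remark~\ref{09.09.2020--1}) combined with the open-image/faithful-flatness Lemma~\ref{11.05.2020--1}. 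This flatness input seems to be an essential ingredient, not an avoidable technicality, and your proposal currently lacks a substitute for it.
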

\begin{proof} 
Let us begin with a piece of commutative algebra which is fundamental to our argument: the ring $\pos Rx$ is $\g r$-adically complete  \cite[Exercises 8.6 and 8.2]{matsumura89}. This allows us to construct $\pos Rx$-modules by taking limits.

{\it Step 1: Putting Deligne-Manin models of truncations together.}
For each $k$, let 
\[\begin{array}{c}\text{$\cm_k$  be a Deligne-Manin logarithmic}\\\text{ model of $
M|_{k}\in\mcrs(\lau Cx/C)_{(R_k)}$,}\end{array}
\] 
as obtained in Theorem \ref{10.03.2020--2}. By definition, the exponents of $\cm_k$ are all on $\tau$. 
Note that $\cm_{k+1}|_k$, regarded as an object of $\mclog(\pos Cx/C)_{(R_k)}$, is a logarithmic lattice for $M|_k$ enjoying all the properties described in Theorem \ref{10.03.2020--2}. (To see that the exponents remain unchanged, see Corollary \ref{17.05.2020--1}.) 
We can therefore, by  Theorem \ref{10.03.2020--2}, find an isomorphism 
\[
\ph_k:\cm_{k+1}|_{k}\arou\sim\cm_k,
\] 
in the category $\mclog(\pos Cx/C)_{(R_k)}$, such that 
\[
\xymatrix{\cm_{k+1}|_k\ar[rr]^{\ph_k}\ar[d]_{\rm can.}&&\cm_k\ar[d]^{\rm can.}
\\
(M|_{k+1})|_k\ar[rr]_{\text{can.}} &&M|_k
}
\] 
commutes.
Because of \ega{0}{I}{Proposition 7.2.9}, \[
\cm:=\lip_k\cm_k
\] 
is a finite $\pos Rx$-module since, as mentioned before,  $\pos Rx\simeq \lip_k\pos{R_k}x$. Furthermore, for each $k$, the natural arrow $\cm/\g r^{k+1}\to\cm_k$ is an isomorphism by loc.cit.
Using the derivations on the various $\cm_k$, we construct a derivation  $\nabla$ on $\cm$: we have therefore produced an  element of $\mclog(\pos Rx/R)$. Clearly for any given $k\in\NN$, the object $\cm|_k\in\mclog(\pos Cx/C)_{(R_k)}$ enjoys properties (1), (2) and (3) of the statement.

{\it Step 2: Showing that the previously constructed logarithmic connection is a model.}  This is  not automatic  since all we know for the moment is the existence of a compatible family of isomorphisms    
\[ \cm[x^{-1}]/\g r^{k+1}\arou\sim M/\g r^{k+1}. 
\]
These {\it do not} necessarily give us an isomorphism of $\lau Rx$-modules $\cm[x^{-1}]\simeq M$. 

For that, let $\mathds  M$ be an $x$-pure logarithmic model for $M$ (cf. Lemma \ref{15.07.2020--1}).    Then   $\mathds M|_k$ is a logarithmic model for $M|_k$ (but we do not have much more to say about it). According to  Corollary \ref{17.05.2020--1}(2) and  Proposition \ref{09.04.2019--1}-(2), there exists an integer $\de\ge0$ such that the dotted arrow in 
\[
\xymatrix{
\MM|_{k}\ar[d]_{\text{can.}}\ar@{-->}[rr]^{\ps_k} &&  \cm(\de)|_k \ar@{^{(}->}[d]^{\text{can.}} 
\\
M|_k\ar@{=}[rr]  && M|_k
}
\] 
can be found for each $k$. (The definition of $\cm(\de)$ is given in Example \ref{24.09.2020--1}.) Note that $\ps_k$ is automatically an arrow of $\mclog(\pos Cx/C)_{(R_k)}$. 

As $\pos Rx$ is $\g r$-adically complete, we then derive an arrow, now in $\mclog(\pos Rx/R)$,
\[
\ps:\MM\aro  \cm(\de)
\]
inducing $\ps_k$ for each $k$.  We contend that $\ps[x^{-1}]:\MM[x^{-1}]\to\cm(\de)[x^{-1}]$ is an isomorphism. Since $\ps[x^{-1}]/\g r^{k+1}$ is an isomorphism for each $k$, we conclude that the $\g r\lau Rx$-adic completion of $\ps[x^{-1}]$ is an isomorphism. Hence,
 $\ps[x^{-1}]$ is an isomorphism {\it on a neighbourhood of   the closed fibre of $\spc \lau Rx\to \spc R$} \ega{0}{I}{7.3.7}.
This implies that  the kernel and cokernel of $\ps[x^{-1}]$, which are objects of $\mc(\lau Rx/R)$, vanish on an open neighbourhood of the closed fibre of $\spc \lau Rx\to \spc R$. 
Using Theorem \ref{18.05.2020--1} and then Lemma \ref{11.05.2020--1} below, we can infer that the kernel and cokernel of $\ps[x^{-1}]$  are trivial and  $\ps[x^{-1}]$ is an isomorphism and $\cm(\de)$ is a model for $M$. 
\end{proof}  

The following result was employed in verifying Theorem \ref{16.05.2020--1} and shall also be useful in establishing Theorem \ref{02.03.2020--2} to come. 

\begin{lem}\label{11.05.2020--1}Let
$R\to \co$ be a faithfully flat morphism of noetherian rings whose fibre rings are domains. Let 
$M$ be an $\co$--module of finite type such that for each $\g p\in \spc R$, the fibre $M\ot_\co(\co\ot_{R}\boldsymbol k(\g p))$ is a flat $\co\ot_R\boldsymbol k(\g p)$-module.   Assume that $M_{\g P_0}=0$ for one prime $\g P_0\in\spc \co$ above $\g r$. Then $M=0$. 
\end{lem}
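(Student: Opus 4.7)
The plan is to analyze $M$ fibre by fibre over $\spc R$, use the assumption $M_{\g P_0}=0$ to kill the entire closed fibre, and then exploit the locality of $R$ through a specialization argument to force every fibre to vanish; the conclusion $M=0$ then follows from Nakayama applied at each prime of $\co$.

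The structural input is that for each $\g p\in\spc R$, the fibre $M/\g pM$ is a finitely generated flat module over the Noetherian domain $\co/\g p\co$, hence projective of constant rank $n(\g p)$ on the irreducible scheme $\spc(\co/\g p\co)$. Applied to $\g p=\g r$, the hypothesis $M_{\g P_0}=0$ gives $(M/\g rM)_{\g P_0}=M_{\g P_0}/\g rM_{\g P_0}=0$, which forces $n(\g r)=0$ and hence $M/\g rM=0$. Writing
\[
T=\{\g p\in\spc R:\,n(\g p)>0\}=\{\g p\in\spc R:\,M/\g pM\neq 0\},
\]
this reads $\g r\notin T$.

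Next I claim $T$ is closed under specialization. Indeed, for $\g p\subset\g p'$, the base-change identity $M/\g p'M\simeq(M/\g pM)\ot_{\co/\g p\co}(\co/\g p'\co)$ shows that $M/\g p'M$ is projective of rank $n(\g p)$ over the nonzero ring $\co/\g p'\co$, so if $n(\g p)>0$ then $n(\g p')>0$. Since $R$ is local, every $\g p\in\spc R$ specializes to $\g r$; thus a nonempty $T$ would force $\g r\in T$, contradicting the previous paragraph. Hence $T=\emptyset$, i.e., $M/\g pM=0$ for all $\g p\in\spc R$. For an arbitrary $\g P\in\spc\co$ with $\g p:=\g P\cap R$, localization at $\g P$ yields $M_{\g P}=\g p\co_{\g P}\cdot M_{\g P}$; as $\g p\co_{\g P}\subset\g P\co_{\g P}=\mm{rad}(\co_{\g P})$ and $M_{\g P}$ is finitely generated over $\co_{\g P}$, Nakayama gives $M_{\g P}=0$, and hence $M=0$.

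The only nontrivial step I foresee is the specialization argument, where the locality of $R$ is decisively used; the remaining ingredients (projectivity of finitely generated flat modules over Noetherian rings, stability of projectivity under base change, and Nakayama) are standard commutative algebra.
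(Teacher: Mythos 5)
There is a genuine gap in your argument, concentrated in the sentence asserting that ``the fibre $M/\g pM$ is a finitely generated flat module over the Noetherian domain $\co/\g p\co$, hence projective of constant rank.'' The hypothesis of the lemma concerns the \emph{scheme-theoretic} fibre $M\ot_R\bm k(\g p)$ over the fibre ring $\co\ot_R\bm k(\g p)$, and you have silently replaced these by $M/\g pM$ over $\co/\g p\co$. The two coincide only when $\g p=\g r$ is maximal. For a non-maximal $\g p$, the fibre ring $\co\ot_R\bm k(\g p)$ is a proper localization of $\co/\g p\co$ (at the image of $R\smallsetminus\g p$); the hypothesis ``fibre rings are domains'' does not say $\co/\g p\co$ is a domain, and the flatness hypothesis does not say $M/\g pM$ is flat over $\co/\g p\co$ (flatness is not preserved by de-localization). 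Consequently the base-change identity you invoke does not allow you to transport a rank from $\g p$ to $\g p'$, and the claim that $T$ is closed under specialization is unproven. The set $\{\g p : M/\g pM\ne0\}$ is also not obviously equal to $\{\g p : n(\g p)>0\}$ for the same reason.

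The specialization statement you need (that $n(\g p')=0$ for $\g p\subset\g p'$ forces $n(\g p)=0$) is in fact true, but requires a different mechanism. One clean route is going-down for the flat map $R\to\co$: from $n(\g p')=0$ one gets $M_{\g P'}=0$ for every $\g P'$ over $\g p'$ (Nakayama); for a given $\g P$ over $\g p$ produce, by going-down applied to some $\g P'$ above $\g p'$, a prime lying over $\g p$ and contained in $\g P'$, at which $M$ vanishes; since $M\ot_R\bm k(\g p)$ is projective of constant rank over the domain $\co\ot_R\bm k(\g p)$, it then vanishes identically, so $n(\g p)=0$. The paper's proof avoids specialization in $\spc R$ altogether: it shows that the open non-support $U=\{\g P:M_{\g P}=0\}$ is a union of fibres of $\spc\co\to\spc R$ (using exactly the constant-rank argument on $\co\ot_R\bm k(\g p)$), and then invokes descent of openness along a faithfully flat morphism to conclude that the image of $U$ in $\spc R$ is open, hence equal to $\spc R$ because $R$ is local. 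Your final Nakayama step (from the vanishing of all fibres to $M=0$) is fine as written, so the repair needed is only in the middle of the argument.
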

\begin{proof}Let $U=\{\g P\in \spc \co\,:\,M_{\g P}=0\}$ be the complement of the support of $M$; it is an open and non-empty subset of $\spc \co$.  Let $\g P\in  U$ and write $\g p$ for its image in $\spc R$. 
Now, if $\g Q\in\spc \co$ is also above $\g p$, we can say that $M_{\g Q}=0$. Indeed,  $M\ot_\co\boldsymbol k(\g P)=0$ and hence the projective  $\co\ot_R\boldsymbol k(\g p)$--module $M\ot_R\boldsymbol k(\g p)$ vanishes. Then, $M\ot_R\boldsymbol k(\g Q)$ vanishes as well and $M_{\g Q}=0$. Now we note that the image of $U$ in $\spc R$ is open \cite[6.G, Theorem 7, pp 46--7]{matsumura80} and contains the closed point $\g r$, which means that the image of $U$ is $\spc R$. We conclude that $U=\spc \co$.  
\end{proof}

Let us dig further on the method of proof of Theorem \ref{16.05.2020--1}. In it, we dealt with an object $(M,\na)\in\mcrs(\lau Rx/R)$ and, for each $k\in\NN$, a logarithmic models $\cm_k$ of $(M,\na)|_{k}$ to conclude that the $\cm_k$ could be used to construct a logarithmic model of $M$. We now show that the hypothesis that $(M,\na)$ is regular-singular is necessary.

\begin{counterex}\label{28.06.2021--8}Let $R=\pos Ct$,  $M=\lau Rx\po\bb m$ and define $\na(\bb m)=( t/x)\po \bb m$; this gives us an object $(M,\na)\in\mc(\lau Rx/R)$. (It is not difficult to prove that $(M,\na)$ is not regular-singular.)
Let $\cm_k=(\pos {R_k}x,\vt)\in\mclog(\pos Cx/C)_{(R_k)}$. Let 
\[
e_k:=\sum_{j=0}^k\fr{t^j x^{-j}}{j!}\in\lau Rx.
\]
Then, in $(M,\na)|_{k}$, the element $e_k\bb m$ satisfies $\na(e_k\bb m)=0$. Hence,  $\cm_k:=(\pos {R_k}x,\vt)$ is logarithmic model for $(M,\na)|_{k}$, but $(\pos Rx,\vt)$ is not a logarithmic model for $(M,\na)$.
\end{counterex}

{ 
In passing, we observe that the Deligne-Manin models in Theorem \ref{16.05.2020--1} have a remarkable property if the regular-singular connection underlies a flat $R$--module.  

\begin{cor}Let  $(M,\na)\in\mcrs(\lau Rx/R)$ be given. Then, if $M$ is $R$-flat, it is the case that the logarithmic  model $\cm$ from Theorem \ref{16.05.2020--1} is \textbf{free} as an $\pos Rx$-module. 
\end{cor}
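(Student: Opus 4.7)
The strategy is to exploit the explicit construction $\cm=\lip_k\cm_k$ given in Theorem~\ref{16.05.2020--1}: each truncation $\cm_k:=\cm/\g r^{k+1}\cm$ is free relatively to $R_k$, so we have $\pos Cx$-linear isomorphisms $\cm_k\cong\pos Cx\otimes_C V_k$ compatible with the $R_k$-actions, where $V_k:=\cm_k/x\cm_k$ is a finitely generated $R_k$-module. Setting $r:=\dim_C V_0$, I aim to prove $\cm\cong\pos Rx^{\,r}$.

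The crux is to upgrade ``free relatively to $R_k$'' to honest $R_k$-freeness of $V_k$; this is the only step that genuinely uses the hypothesis on $M$. As $M$ is $R$-flat, the tensor $M|_k=M\otimes_R R_k$ is flat over $R_k$. But $M|_k=\cm_k[x^{-1}]\cong\lau Cx\otimes_C V_k$ as $R_k$-module, the $R_k$-action being concentrated in the factor $V_k$. Choosing any $C$-linear splitting $\lau Cx=C\cdot 1\oplus W$ realises $V_k$ as an $R_k$-linear retract of $M|_k$, so $V_k$ is $R_k$-flat. Being finitely generated over the Artinian local ring $R_k$ it is therefore $R_k$-free, and Nakayama pins the rank at $\dim_C V_k/\g rV_k=\dim_C V_0=r$. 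Consequently $\cm_k\cong\pos Cx\otimes_C R_k^{\,r}\cong\pos{R_k}x^{\,r}$ as $\pos{R_k}x$-module.

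To globalise, lift a $C$-basis of $\cm/(x,\g r)\cm=V_0$ to elements $e_1,\ldots,e_r\in\cm$; since $\pos Rx$ is local with maximal ideal $(x,\g r)\pos Rx$ and $\cm$ is finitely generated, Nakayama yields a surjection $u:\pos Rx^{\,r}\twoheadrightarrow\cm$. Reducing $u$ modulo $\g r^{k+1}$ gives a surjection $u_k:\pos{R_k}x^{\,r}\twoheadrightarrow\cm_k$; composing with the isomorphism $\cm_k\cong\pos{R_k}x^{\,r}$ just constructed turns $u_k$ into a surjective endomorphism of a finitely generated module over the commutative ring $\pos{R_k}x$, hence automatically injective. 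Therefore $\ker u\subseteq\bigcap_k \g r^{k+1}\pos Rx^{\,r}$, which vanishes by Krull's intersection theorem since $\g r\pos Rx$ lies in the Jacobson radical of $\pos Rx$. Thus $u$ is an isomorphism and $\cm$ is free of rank $r$, as claimed.

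The main obstacle is the retract argument producing $R_k$-flatness of $V_k$; all other steps are routine applications of Nakayama and of the same Mittag-Leffler/limit considerations already developed in the proof of Theorem~\ref{16.05.2020--1}.
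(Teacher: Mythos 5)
Your proof is correct, and it takes a genuinely different and in fact lighter route than the paper's. The published argument also reduces to showing that the $R_k$-module $V_k$ appearing in $\cm|_k\simeq\pos{R_k}x\otimes_{R_k}V_k$ is $R_k$-flat, but it reaches this in two non-trivial steps: first it invokes Theorem~\ref{29.06.2020--1} (which itself rests on Andr\'e's theorem and the fibre-by-fibre flatness criterion) to pass from $R$-flatness of $M$ to $\lau Rx$-flatness, concluding that each $M|_k$ is $\lau{R_k}x$-flat; then it descends flatness from $\lau{R_k}x$ to $R_k$ via faithful flatness of $R_k\to\lau{R_k}x$. Your retract argument---writing $M|_k\cong\lau Cx\otimes_C V_k$, splitting off the summand $C\cdot 1\otimes_C V_k$, and noting that a direct $R_k$-summand of the $R_k$-flat module $M|_k$ is $R_k$-flat---gets the same conclusion from nothing more than base change of flat modules and the structure theory over an Artinian local ring, completely bypassing Theorem~\ref{29.06.2020--1}. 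The endgame also differs: the paper cites the local flatness criterion \cite[Theorem 22.3]{matsumura89} to upgrade flatness of each $\cm|_k$ to flatness (hence freeness) of $\cm$, whereas you lift a basis of $\cm/(x,\g r)\cm$ by Nakayama, check the resulting surjection $\pos Rx^r\twoheadrightarrow\cm$ is an isomorphism modulo every $\g r^{k+1}$ by the surjective-endomorphism trick, and kill the kernel with Krull's intersection theorem. Both endgames are routine; the real gain of your approach is the elementary identification of $V_k$ as an $R_k$-direct summand of $M|_k$, which sidesteps the most delicate ingredient in the paper's proof.
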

\begin{proof}Let $k$ be fixed. We shall show that $\cm|_k$ is flat over $\pos {R_k}x$ and then apply the local flatness criterion \cite[Theorem 22.3, p.174]{matsumura89} to assure flatness of $\cm\simeq\lip\cm|_k$; this in turn shows that $\cm$ is free since $\pos Rx$ is local.
We note that, since $M$ is $R$-flat, it is also $\lau Rx$--flat (Theorem \ref{29.06.2020--1}) and therefore $M|_k$ is also $\lau {R_k}x$--flat. 

By assumption, we can  write $\cm|_k\simeq \pos {R_k}x\ot_{R_k}V_k$ for a certain $R_k$-module $V_k$. Then, $\lau {R_k}x\ot_{R_k}V_k\simeq M|_k$ is $\lau{R_k}x$--flat. Because $R_k\to \lau {R_k}x$ is faithfully flat (flatness follows from flatness of $R_k\to \pos {R_k}x$) we conclude that $V_k$ is $R_k$--flat \cite[4.E(i), p. 29]{matsumura80}. Hence, $\cm|_k$ is flat.  
\end{proof}
} 
 
In possession of Theorem \ref{16.05.2020--1}, we are now able to interpret the category $\mcrs(\lau Rx/R)$  as a category of representations echoing Corollary \ref{22.04.2020--2}. We need a definition. 

\begin{dfn}\label{25.09.2020--2}We let $\mcrs(\lau Rx/R)^\wedge$ stand for the category whose \item [{\it objects}] are families $\{(M_k,\ph_k)\}_{k\in\NN}$, where 
$M_k\in\mcrs(\lau Cx/C)_{(R_k)}$ and $\ph_k:M_{k+1}|_{k}\to M_k$ are isomorphisms in $\mcrs(\lau Cx/C)_{(R_k)}$;
\item[{\it arrows}] between $\{(M_k,\ph_k)\}_{k\in\NN}$ and $\{(N_k,\ps_k)\}_{k\in\NN}$ are  compatible  sequences  \[\{\al_k:M_k\to N_k\}\in\prod_k\hh{\mc_{(R_k)}}{M_k}{N_k}.\] 
\end{dfn}

\begin{thm}\label{02.03.2020--2}The natural functor 
\[
\bb{MC}_{\rm rs} (\lau Rx/R)\aro  \mcrs(\lau Rx/R)^\wedge,
\]
\[
(M,\na)\longmapsto\{(M,\na)|_k\}_k
\]
is an equivalence. 
\end{thm}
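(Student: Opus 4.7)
The strategy is to exploit Theorem \ref{16.05.2020--1} and Theorem \ref{10.03.2020--2} to construct an inverse truncation-by-truncation and then to glue by passage to the limit, mirroring the argument that produced Theorem \ref{16.05.2020--1}.

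For essential surjectivity, given $\{(M_k,\ph_k)\}\in\mcrs(\lau Rx/R)^\wedge$, use Theorem \ref{10.03.2020--2} to pick for each $k$ a Deligne-Manin logarithmic lattice $\cm_k\in\mclog(\pos Cx/C)_{(R_k)}$ of $M_k$. Since $\cm_{k+1}|_k$ is also a Deligne-Manin model of $M_{k+1}|_k$ (exponents are preserved under reduction modulo a nilpotent ideal by Corollary \ref{17.05.2020--1}), the uniqueness in Theorem \ref{10.03.2020--2} promotes $\ph_k$ to an isomorphism $\tilde\ph_k:\cm_{k+1}|_k\stackrel\sim\to\cm_k$ in $\mclog(\pos Cx/C)_{(R_k)}$. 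Because $\pos Rx\simeq\lip_k\pos{R_k}x$ is $\g r$-adically complete, $\cm:=\lip_k\cm_k$ is a finite $\pos Rx$-module with $\cm/\g r^{k+1}\cm\simeq\cm_k$ (EGA $0_{\mm I}$.7.2.9), and the derivations on the $\cm_k$ glue to an $R$-linear derivation $\na$ on $\cm$. Setting $M:=\cm[x^{-1}]\in\mcrs(\lau Rx/R)$, the computation $M|_k=(\cm/\g r^{k+1}\cm)[x^{-1}]=\ga(\cm_k)\simeq M_k$ shows that the image of $(M,\na)$ under the functor is isomorphic to $\{(M_k,\ph_k)\}$.

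For fully faithfulness, apply Theorem \ref{16.05.2020--1} to $M,N\in\mcrs(\lau Rx/R)$ to obtain logarithmic models $\cm$ of $M$ and $\cn$ of $N$ whose truncations are Deligne-Manin models. For fullness, given a compatible family $\{\al_k:M_k\to N_k\}$, Theorem \ref{10.03.2020--2} produces unique lifts $\tilde\al_k:\cm|_k\to\cn|_k$; uniqueness forces $(\tilde\al_{k+1})|_k=\tilde\al_k$ since both lift $\al_k$, so the family assembles into $\tilde\al:\cm\to\cn$ in $\mclog(\pos Rx/R)$, and $\ga(\tilde\al)$ is the desired lift. For faithfulness, let $\al:M\to N$ satisfy $\al|_k=0$ for all $k$. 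Choose generators $m_1,\ldots,m_n$ of $\cm$ and $\de\in\NN$ large enough that $x^\de\al(m_i)\in\cn$ for each $i$; this produces a lift $\tilde\al:\cm\to\cn(\de)$ with $\ga(\tilde\al)=\al$. Since $\cn|_k$ is free relative to $R_k$, so is $\cn(\de)|_k$, whence the canonical map $\cn(\de)|_k\to N|_k$ is injective; it follows that $\tilde\al|_k=0$, i.e. $\tilde\al(\cm)\subset\bigcap_k\g r^{k+1}\cn(\de)$. Since $\pos Rx$ is noetherian local and $\g r\cdot\pos Rx$ lies in its Jacobson radical, Krull's intersection theorem gives $\bigcap_k\g r^{k+1}\cn(\de)=0$, forcing $\tilde\al=0$ and thus $\al=0$.

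The main technical delicacy sits in the faithfulness step, because $\lau Rx$ itself is \emph{not} $\g r$-adically complete and so the vanishing of $\al|_k$ for all $k$ does not a priori entail $\al=0$ at the $\lau Rx$-level. The workaround is to push the question back to the logarithmic model $\cn(\de)$, where $\pos Rx$-completeness and the Krull intersection property are available; this in turn requires the Deligne-Manin lattice $\cn$ to be free relative to $R_k$ on each truncation, which is exactly what Theorem \ref{16.05.2020--1} guarantees. Everything else is a clean application of the uniqueness part of Theorem \ref{10.03.2020--2} combined with the completeness of $\pos Rx$.
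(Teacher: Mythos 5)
Your proof is correct, and the treatment of essential surjectivity and fullness mirrors the paper's argument step for step. The interesting divergence is in the faithfulness step, where you take a genuinely different and, I would say, more elementary route. The paper's own argument keeps $I=\mm{Im}(\ph)\subset N$ at the level of $\lau Rx$-modules: from $I\subset\bigcap_k\g r^kN$ and Nakayama it extracts an element $a\equiv1\mod\g r$ killing $I$, concludes that $I$ vanishes at all primes above $\g r$, and then invokes the fiberwise flatness result Theorem \ref{18.05.2020--1} (which itself relies on a theorem of Andr\'e) together with Lemma \ref{11.05.2020--1} to propagate this vanishing to all of $\spc\lau Rx$. Your argument instead lifts $\ph$ to a morphism $\tilde\ph\colon\cm\to\cn(\de)$ of logarithmic models over $\pos Rx$---which is legitimate because $\cn$, being the model from Theorem \ref{16.05.2020--1}, has each $\cn|_k$ free relative to $R_k$ and hence $x$-torsion-free, so $\cn(\de)\to N$ and $\cn(\de)|_k\to N|_k$ are injective---and then applies Krull's intersection theorem in the noetherian local ring $\pos Rx$, in which $\g r\pos Rx$ sits inside the Jacobson radical. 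This neatly sidesteps both the failure of $\lau Rx$ to be $\g r$-adically complete and the whole flatness-of-fibers machinery; the price is that one must first secure the lift to a model with torsion-free truncations, which is exactly what the Deligne-Manin construction of Theorem \ref{16.05.2020--1} supplies.
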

\begin{proof}We start by showing {\it essential surjectivity}. To ease notation, we omit reference to the derivations. 
Let  
\[
\{ M_k, \ph_k\}_{k\in\NN}\in \mcrs(\lau Rx/R)^\wedge.
\] 
Let  $\cm_k$ be the logarithmic {\it lattice} constructed from $M_k$ as in Theorem \ref{10.03.2020--2}. Note that $
\cm_{k+1}|_{k} \in\mclog(\pos Cx/C)_{(R_k)}$
is a logarithmic lattice for $M_{k+1}|_{k}$ which satisfies all conditions of Theorem \ref{10.03.2020--2}.
Hence, 
\[
\ph_k : M_{k+1}|_{k}\arou\sim M_k
\]
can be extended to an {\it isomorphism} 
\[
\Ph_k:\cm_{k+1}|_{k}\arou\sim\cm_k
\]
in $\mclog(\pos Cx/C)_{(R_k)}$. 

Define \[\cm=\lip_k\cm_k.\]  As an $\pos Rx=\lip_k\pos{R_k}x$-module, it is of finite type   and the projection $\cm\to\cm_k$ has kernel $\g r^{k+1}\cm$ \ega{0}{I}{Proposition 7.2.9}. Therefore, $\cm$ gives rise to  an object of $\mclog(\pos Rx/R)$. Let $M=\ga(\cm)$. Then $M$ is an object of $\mcrs(\lau Rx/R)$  whose image in $ \mcrs(\lau Rx/R)^\wedge$ is $\{M_k,\ph_k\}$. 

We now prove {\it fullness}. 
Let $M$ and $N$ be objects of $\mcrs(\lau Rx/R)$ and pick Deligne-Manin models $\cm$ and $\cn$ of $M$ and $N$ as in Theorem \ref{16.05.2020--1}.  
For each  $k$, let 
\[
\ph_k:M|_k\aro N|_k
\] be an arrow in $\mcrs(\lau Cx/C)_{(R_k)}$ and suppose that $\ph_{k+1}|_k=\ph_k$. Because of Theorem \ref{10.03.2020--2}, there exists an arrow in $\mclog(\pos Cx/C)_{(R_k)}$,  $\wt\ph_k:\cm|_k\to\cn|_k$, extending $\ph_k$. In addition, uniqueness of the extension  forces $\wt\ph_{k+1}|_{k}$ to coincide with $\wt\ph_k$ after all the necessary identifications. Hence, there exists $\wt\ph:\cm\to \cn$ such that $\wt\ph|_k=\wt\ph_k$, which establishes the existence of $\ph:M\to N$ inducing each $\ph_k$. 

Finally, we establish {\it faithfulness}. Let then $\ph:M\to N$ be such that $\ph_k:M|_k\to N|_k$ is null; we conclude that $I=\mm{Im}(\ph)\subset\cap_k\g r^k\cn$. By Nakayama's Lemma \cite[Theorem 2.2, p.8]{matsumura89}, there exists $a\equiv1\mod\g r$  such that  $aI=0$. Hence, $I_{\g p}=0$ if $\g p\in\spc \lau Rx$ is above $\g r$. Now,   $I\in\mc(\lau Rx/R)$ and hence Theorem \ref{18.05.2020--1} followed by  Lemma \ref{11.05.2020--1} prove that $I=0$.
\end{proof}

Let now 
\[\Ph_\al:\rep C\ZZ\aro \mcrs(\lau Cx/C)
\] be a tensor equivalence as in Corollary \ref{22.04.2020--2}; it produces obvious equivalences 
\[
\Ph_\al: \rep C\ZZ_{(R_k)}\arou\sim \mcrs(\lau Cx/C)_{(R_k)}
\] 
of $R_k$-linear categories. 
Following the pattern established in Definition \ref{25.09.2020--2}, we introduce the category $\rep R\ZZ^\wedge$. With little effort it can be proved that $\rep R\ZZ^\wedge$ is equivalent to $\rep R\ZZ$.
We hence arrive at: 
\begin{cor}\label{16.07.2020--1}The composition 
\[\mcrs(\lau Rx/R)\aro \mcrs(\lau R x/R)^\wedge\aro  \rep R\ZZ^\wedge\simeq \rep R\ZZ\]
is an equivalence of $R$-linear tensor categories. \qed
\end{cor}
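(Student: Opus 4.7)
The plan is to verify that each of the three functors in the stated composition is an $R$-linear tensor equivalence. The opening arrow $\mcrs(\lau Rx/R) \to \mcrs(\lau Rx/R)^\wedge$ is precisely the functor appearing in Theorem \ref{02.03.2020--2}, where it has already been shown to be an equivalence of abelian categories, so I only need to check it is tensor. This will follow immediately from the isomorphism $\lau Rx \otimes_R R_k \simeq \lau {R_k}x$ (used already via Artin-Rees in the proof of Theorem \ref{18.05.2020--1}), which ensures that the truncation of a tensor product of connections is canonically the tensor product of their truncations.

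For the middle arrow, I will argue that the $C$-linear tensor equivalence $\Ph_\al: \rep C\ZZ \arou\sim \mcrs(\lau Cx/C)$ of Corollary \ref{22.04.2020--2} automatically upgrades, for every finite-dimensional commutative $C$-algebra $\La$, to an equivalence of $\La$-linear tensor categories
\[
\Ph_\al : \rep C\ZZ_{(\La)} \arou\sim \mcrs(\lau Cx/C)_{(\La)},
\]
since a tensor equivalence preserves endomorphism rings and tensor structures (and the left-hand side identifies with $\rep\La\ZZ$ via Definition \ref{28.06.2021--2}). Taking $\La = R_k$ and checking compatibility with the transition functors induced by the surjections $R_{k+1} \to R_k$ will then glue these levelwise equivalences into an equivalence $\mcrs(\lau Rx/R)^\wedge \simeq \rep R\ZZ^\wedge$ of $R$-linear tensor categories.

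The final equivalence $\rep R\ZZ^\wedge \simeq \rep R\ZZ$ is the one alluded to just before the corollary. I plan to prove it by associating to a compatible family $\{(V_k,\si_k)\}$ the $R$-module $V := \lip_k V_k$ endowed with the assembled $R$-linear automorphism $\si$; then \ega{0}{I}{Proposition 7.2.9} together with the completeness of $R$ will guarantee that $V$ is finite over $R$ with $V/\g r^{k+1}V \simeq V_k$, so the construction is quasi-inverse to the evident truncation functor. Fullness and faithfulness will reduce to Nakayama's Lemma applied levelwise, with none of the connection-theoretic subtleties that complicated Theorem \ref{02.03.2020--2}. The hard part will be just the bookkeeping of tensor compatibilities through the wedge construction, but the substantive mathematical content has already been absorbed into Theorem \ref{02.03.2020--2} and Corollary \ref{22.04.2020--2}.
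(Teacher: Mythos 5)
Your proposal follows exactly the route the paper intends: the first arrow is Theorem \ref{02.03.2020--2}, the middle arrow comes from the levelwise equivalences $\rep C\ZZ_{(R_k)}\simeq\mcrs(\lau Cx/C)_{(R_k)}$ obtained by transporting $\La$-actions along the equivalence $\Ph_\al$ of Corollary \ref{22.04.2020--2}, and the last step is the limit construction via \ega{0}{I}{Proposition 7.2.9}, which the paper dismisses as requiring ``little effort.'' The additional checks you flag (tensor compatibility of truncation via $\lau Rx\otimes_R R_k\simeq\lau{R_k}x$, the identification $\rep C\ZZ_{(\La)}\simeq\rep\La\ZZ$, and Nakayama for faithfulness) are correct and are precisely the bookkeeping the paper elides.
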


\section{Connections on $\PP_R\smallsetminus\{0,\infty\}$}\label{29.06.2021--1}
In what follows, $\PP_R$ stands for the projective line over $R$; it is covered by the two affine open subsets $\af_0=\spc R[x]$ and $\af_\infty =\spc R[y]$, and $x=y^{-1}$ on $\af_0\cap\af_\infty=\PP_R\smallsetminus\{0,\infty\}$. 

 Following the pattern of   Definition \ref{01.07.2021--1}, we introduce  the  category of {\it  connections on $\PP_R\smallsetminus\{0,\infty\}$, or on $R[x^\pm]$}, of {\it logarithmic connections on $\PP_R$} and    of {\it regular-singular connections}; we denote them respectively by    
\[\mc(R[x^\pm]/R),\quad
\mclog(\PP_R/R)\quad\text{and}\quad\mcrs(R[x^\pm]/R).
\] 
Letting 
\[
 \mclog(\PP_R/R)\arou{\bb r_0}\mclog(\pos Rx/R)\quad\text{and}\quad \mclog(\PP_R/R)\arou{\bb r_\infty}\mclog(\pos Ry/R)
\]
stand for the obvious functors, we define the  {\it exponents} of $(\cm,\na)\in\mclog(\PP_R/R)$ as the set of exponents of either $\bb r_0\cm$ or $\bb r_\infty\cm$ (cf. Definition \ref{28.06.2021--6}).

Denote by
\[
\ga_\PP: \mclog(\PP_R/R)\aro\mc(R[x^\pm]/R)
\]
the   functor which associates to $(\ce,\na)$ its restriction to $\PP_R\smallsetminus\{0,\infty\}$. 
Given $M\in\mcrs(R[x^\pm]/R)$, any $\cm\in\mclog(\PP_R/R)$ such that $\ga_\PP(\cm)\simeq M$ is called a {\it logarithmic model} of $M$. 

Note that if $M\in\mc(R[x^\pm]/R)$ is regular-singular, then 
\[
M|_{k}=M/\g r^{k+1}M\in\mc(C[x^\pm]/C)_{(R_k)}
\] 
is also regular-singular for any given $k\in\NN$.

We now complete the picture drawn in Section \ref{28.06.2021--7} 
 by analyzing regular-singular connections on $R[x^\pm]$. 
We aim at 

\begin{thm}\label{20.10.2020--6}The restriction 
\[
\bb r_0: \mcrs(R[x^\pm]/R)\aro \mcrs(\lau Rx/R)
\]
is an equivalence.  
\end{thm}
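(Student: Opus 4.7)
The plan is to reproduce the architecture of Section~\ref{28.06.2021--7}, replacing the $\g r$-adic completeness of $\pos Rx$---which was the engine powering Theorem~\ref{16.05.2020--1} and Theorem~\ref{02.03.2020--2}---by Grothendieck's existence theorem (GFGA) for the proper morphism $\PP_R\to\spc R$. Concretely, I would introduce the category $\mcrs(R[x^\pm]/R)^\wedge$ by imitating Definition~\ref{25.09.2020--2}, with $\mcrs(\lau Cx/C)_{(R_k)}$ replaced throughout by $\mcrs(C[x^\pm]/C)_{(R_k)}$, and then show that the two vertical and two horizontal arrows in the square
\[
\xymatrix{\mcrs(R[x^\pm]/R)\ar[r]\ar[d]_{\bb r_0} & \mcrs(R[x^\pm]/R)^\wedge\ar[d]\\ \mcrs(\lau Rx/R)\ar[r] & \mcrs(\lau Rx/R)^\wedge}
\]
are all equivalences, where the horizontal arrows are truncation and the right-hand vertical arrow is $\{\bb r_0\}$ applied level by level. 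The bottom horizontal arrow is Theorem~\ref{02.03.2020--2} and the right-hand one follows from Corollary~\ref{07.07.2020--1} taken for every $k$, so only the top horizontal arrow needs to be produced.

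First I would prove the $\PP_R$--analogue of Theorem~\ref{02.03.2020--2}. For essential surjectivity, given $\{M_k,\ph_k\}\in\mcrs(R[x^\pm]/R)^\wedge$, use Theorem~\ref{10.07.2020--3} to produce a Deligne-Manin lattice $\cm_k\in\mclog(\PP/C)_{(R_k)}$ for each $M_k$, with exponents in $\tau$ and locally free relatively to $R_k$. By the uniqueness statement Theorem~\ref{12.10.2020--2}, each $\ph_k$ extends to a unique isomorphism $\Ph_k:\cm_{k+1}|_k\stackrel\sim\to\cm_k$ in $\mclog(\PP/C)_{(R_k)}$, and the $\Ph_k$ are mutually compatible. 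Since $\PP\to\spc C$ is proper and $R$ is $\g r$-adically complete, Grothendieck's GFGA algebraises the compatible system $\{\cm_k,\Ph_k\}$ to a coherent $\co_{\PP_R}$-module $\cm$; the compatible derivations $\na_k$ assemble into an $R$-linear logarithmic connection on $\cm$. To see that $\ga_\PP(\cm)$ is the desired object, I would mimic Step~2 of the proof of Theorem~\ref{16.05.2020--1}: fix an $x$-pure model $\MM$ of the object in $\mcrs(R[x^\pm]/R)$ reconstructed at the $\lau Rx$-level via Theorem~\ref{02.03.2020--2}, apply Proposition~\ref{12.10.2020--1} at each truncation to get arrows $\MM\to\cm(\de)$ with a uniform $\de$, then pass to the limit (which is permissible on the affine pieces $\af_0$ and $\af_\infty$), and use Theorem~\ref{18.05.2020--1} together with Lemma~\ref{11.05.2020--1} to upgrade the resulting formal isomorphism into a genuine one. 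Fullness and faithfulness of the truncation functor then follow from the uniqueness clause of Theorem~\ref{12.10.2020--2} (which yields compatible extensions of arrows on models) together with Theorem~\ref{18.05.2020--1} and Lemma~\ref{11.05.2020--1} (which kill the kernel of an arrow vanishing modulo every $\g r^{k+1}$), exactly as at the end of Theorem~\ref{02.03.2020--2}.

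With the top horizontal arrow in place, the conclusion is automatic: the right-hand vertical arrow of the square is an equivalence by applying Corollary~\ref{07.07.2020--1} for each $k$, which carries compatible families to compatible families, and the left-hand vertical $\bb r_0$ is compatible with truncation, so it too is an equivalence. The main obstacle is the construction step: one must verify that Grothendieck's algebraisation theorem---in the form stating that coherent sheaves on $\PP_R$ are the category of compatible coherent sheaves on $\PP_{R_k}$ for $k\ge0$---can be applied to the compatible system $\{\cm_k,\Ph_k\}$ carrying logarithmic connections, and that the resulting global object is a genuine logarithmic \emph{model} of the regular-singular connection on $\lau Rx$ produced by Theorem~\ref{02.03.2020--2} (not merely a formal one). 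The only subtle part is the latter, and is handled by the same $x$-purity and pole-bound argument already developed in Theorem~\ref{16.05.2020--1}, now using Proposition~\ref{12.10.2020--1} on each affine open. Counter-example~\ref{28.06.2021--8} warns that regular-singularity of the target object is indispensable for the limit to produce a model, but here that is built in by construction of the $\cm_k$ via Theorem~\ref{10.07.2020--3}.
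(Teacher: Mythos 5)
Your plan tracks the paper's structure closely: reduce to the algebraised category $\mcrs(R[x^\pm]/R)^\wedge$, build Deligne--Manin lattices over each $R_k$ via Theorem~\ref{10.07.2020--3}, extend the transition maps uniquely via Theorem~\ref{12.10.2020--2}, and algebraise via GFGA. However, one step as written would fail. You claim that, having produced arrows $\ps_k:\MM|_k\to\cm(\de)|_k$ at every truncation, you can ``pass to the limit (which is permissible on the affine pieces $\af_0$ and $\af_\infty$).'' This is not permissible: unlike $\pos Rx$, the rings $R[x]$ and $R[y]$ are not $\g r$-adically complete, so taking $\lip_k$ of the maps $\MM(\af_0)/\g r^{k+1}\to\cm(\de)(\af_0)/\g r^{k+1}$ produces a map between the $\g r$-adic completions of $\MM(\af_0)$ and $\cm(\de)(\af_0)$, which are strictly larger modules over a strictly larger ring, and there is no direct descent. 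This is exactly the reason the paper must invoke GFGA---a genuine use of properness of $\PP_R\to\spc R$, not of local completeness---both for coherent sheaves and, crucially, for morphisms between them; the latter is packaged in Lemma~\ref{04.07.2021--1}, which you should cite explicitly in place of the ``pass to the limit on affine pieces'' argument. Relatedly, you cite Theorem~\ref{18.05.2020--1}, which concerns modules over $\lau Rx$; the correct fibre-flatness input in the present $R[x^\pm]$-setting is Lemma~\ref{05.07.2021--1}.

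There is also an organisational muddle with a whiff of circularity. The ``Step 2'' pole-bound/$x$-purity argument is not needed for essential surjectivity of the truncation functor $\mcrs(R[x^\pm]/R)\to\mcrs(R[x^\pm]/R)^\wedge$: once $\cm$ is algebraised from the $\cm_k$, one has $\ga_\PP(\cm)|_k\simeq\ga_\PP(\cm_k)\simeq M_k$ automatically, because $\ga_\PP$ commutes with $(\cdot)|_k$ on the affine open $\af_0\cap\af_\infty$. What Step 2 \emph{is} needed for is the separate statement that a prescribed $M\in\mcrs(R[x^\pm]/R)$ admits a Deligne--Manin model (Theorem~\ref{12.10.2020--3}), which is in turn required to establish \emph{fullness} of the truncation functor: one picks $x$-pure models $\MM$ and shows they map into the GFGA-algebraised lattice. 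Your phrasing ``fix an $x$-pure model $\MM$ of the object in $\mcrs(R[x^\pm]/R)$ reconstructed at the $\lau Rx$-level via Theorem~\ref{02.03.2020--2}'' is circular, since Theorem~\ref{02.03.2020--2} only hands you an object of $\mcrs(\lau Rx/R)$, and producing a preimage under $\bb r_0$ is precisely what Theorem~\ref{20.10.2020--6} asserts. Separating the two statements---essential surjectivity (GFGA only) and Deligne--Manin models (Step 2, used for fullness)---as the paper does removes the circularity and puts the $x$-purity argument where it belongs.
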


Its proof will follow with little effort from   Theorem \ref{20.10.2020--8} below. 
This, in turn,  requires the category 
\[
\mcrs(R[x^\pm]/R)^\wedge,
\] 
whose definition parallels   Definition \ref{25.09.2020--2} (the details are left to the reader). Let 
\begin{equation}\label{20.10.2020--7}
\bb r_0^\wedge :   \mcrs(R[x^\pm]/R)^\wedge\aro \mcrs(\lau Rx/R)^\wedge
\end{equation}
be the obvious functor. Because of Corollary \ref{07.07.2020--1}, we know that $\bb r_0^\wedge$ is an equivalence.  
\begin{thm}\label{20.10.2020--8}The natural functor 
\[
\mcrs(R[x^\pm]/R)\aro \mcrs(R[x^\pm]/R)^\wedge
\]
\[
(M,\na)\longmapsto\{(M,\na)|_k\}_k
\]
is an equivalence. 
\end{thm}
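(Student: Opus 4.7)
The strategy is to mirror the architecture of Theorem \ref{02.03.2020--2}, substituting Grothendieck's formal existence theorem applied to the proper morphism $\PP_R \to \spc R$ for the role played there by $\g r$-adic completeness of $\pos Rx$.

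\emph{Essential surjectivity.} Given a system $\{(M_k,\varphi_k)\}_k$ in $\mcrs(R[x^\pm]/R)^\wedge$, I first invoke Theorem \ref{10.07.2020--3} to choose, for each $k$, a Deligne--Manin logarithmic lattice $\cm_k\in\mclog(\PP_{R_k}/R_k)$ of $M_k$, with all exponents in $\tau$ and locally free relative to $R_k$. Since the Deligne--Manin conditions are preserved under truncation (Corollary \ref{17.05.2020--1} for the exponents, and local freeness relative to $R_k$ being stable under reduction mod $\g r^{k+1}R_{k+1}$), Theorem \ref{12.10.2020--2} uniquely lifts each $\varphi_k$ to an isomorphism $\Phi_k:\cm_{k+1}|_k\arou\sim\cm_k$ in $\mclog(\PP_{R_k}/R_k)$. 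The resulting adic system $(\cm_k,\Phi_k)_k$ is a coherent formal sheaf on the formal completion $\wh{\PP_R}$ of $\PP_R$ along the special fibre. Grothendieck's existence theorem \ega{III}{1}{5.1.4}, applied to the proper morphism $\PP_R\to\spc R$ over the complete noetherian ring $R$, algebraises it to a coherent, in fact locally free, $\co_{\PP_R}$-module $\cm$ with $\cm\ot_R R_k\simeq\cm_k$ compatibly with the $\Phi_k$.

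\emph{Gluing the derivation.} To assemble the $\{\na_k\}$ into a log connection on $\cm$, I encode log connections as $\co_{\PP_R}$-linear splittings of the logarithmic principal-parts short exact sequence
\[
0\aro\cm\ot\Om^1_{\PP_R/R}(\log 0+\log\infty)\aro\cp^1_{\log}(\cm)\aro\cm\aro0
\]
(which is a sequence of coherent $\co_{\PP_R}$-modules whose reduction mod $\g r^{k+1}$ recovers the analogous sequence for $\cm_k$). The compatible family $\{\na_k\}$ gives a compatible family of such splittings at each truncation; GFGA for Hom-sheaves between coherent $\co_{\PP_R}$-modules algebraises this family to a splitting $\na$ over $\PP_R$. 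Setting $M:=\ga_\PP(\cm,\na)$ then yields an object of $\mcrs(R[x^\pm]/R)$ whose image in $\mcrs(R[x^\pm]/R)^\wedge$ is the given system.

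\emph{Fullness and faithfulness.} For fullness, a compatible family $\{\ph_k:M_k\to N_k\}$ lifts uniquely by Theorem \ref{12.10.2020--2} to compatible $\wt\ph_k:\cm_k\to\cn_k$, which the Hom-sheaf form of GFGA glues to $\wt\ph:\cm\to\cn$; restricting to $\PP_R\smallsetminus\{0,\infty\}$ produces the required $\ph:M\to N$. Faithfulness is a transcription of the closing argument of Theorem \ref{02.03.2020--2}: if $\ph_k=0$ for all $k$ then $I:=\mm{Im}(\ph)\subset\bigcap_k\g r^{k+1}N$, and Krull's intersection theorem applied to the finitely generated $R[x^\pm]$-module $N$ yields $a\in\g rR[x^\pm]$ with $(1+a)I=0$, whence $I_{\g p}=0$ for every prime $\g p$ of $R[x^\pm]$ lying above $\g r$; the $R[x^\pm]$-analogue of Theorem \ref{18.05.2020--1} together with Lemma \ref{11.05.2020--1} applied to the faithfully flat morphism $R\to R[x^\pm]$ (whose fibres are domains) then forces $I=0$.

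\emph{Main obstacle.} The chief technical difficulty is the gluing of the derivation: being merely $R$-linear and not $\co_{\PP_R}$-linear, $\na$ is not a section of any coherent Hom-sheaf between the $\cm$'s, so GFGA does not apply to it directly. Reinterpreting a log connection as an $\co_{\PP_R}$-linear splitting of the log principal-parts extension recasts the question in genuine Hom-sheaf terms and unlocks GFGA; everything else is a routine transcription of the proof of Theorem \ref{02.03.2020--2}.
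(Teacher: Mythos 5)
Your proposal is correct and follows essentially the same path as the paper: Deligne--Manin lattices at each truncation (Theorem~\ref{10.07.2020--3}), unique compatible lifts of morphisms via Theorem~\ref{12.10.2020--2}, GFGA to algebraise the adic system, a reinterpretation of a log connection as an $\co_{\PP_R}$-linear splitting of a coherent extension so that GFGA can be applied once more, and a Krull/Nakayama argument plus Lemmas~\ref{05.07.2021--1} and~\ref{11.05.2020--1} for faithfulness. The one cosmetic difference is the gluing step: you invoke the logarithmic principal-parts sequence, whereas the paper works with the explicit model $J\ce=\ce\op\ce$ carrying the twisted $\co_{\PP_R}$-action $a\cdot(e,e')=(ae,\,ae'+\vt(a)e)$; since $\Om^1_{\PP_R/R}(\log 0+\infty)$ is free on $dx/x$ (dual to $\vt$), the two constructions are identical, and the paper's choice merely avoids introducing the principal-parts formalism.
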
 

Assuming the veracity of this result, we can give a 

\begin{proof}[Proof of Theorem \ref{20.10.2020--6}] Follows from the the commutative diagram of categories
\[
\xymatrix{
\mcrs(R[x^\pm]/R)\ar[rr]^{\bb r_0} \ar[d]_{\text{Theorem \ref{20.10.2020--8}}}^\sim & &   \mcrs(\lau Rx/R)\ar[d]^{\text{Theorem \ref{02.03.2020--2}}}_\sim
\\
\mcrs(R[x^\pm]/R)^\wedge\ar[rr]_{\bb r_0^\wedge} && \mcrs(\lau Rx/R)^\wedge 
}
\]
and the fact that $\bb r_0^\wedge$ is an equivalence. 
\end{proof}

Let us now start the verification of Theorem \ref{20.10.2020--8}. Simple facts come first.  

\begin{lem}\label{06.07.2021--1}Any $M\in\mcrs(R[x^\pm]/R)$ allows a logarithmic model $\cm$ such that $\cm(\af_0)$ has no $x$-torsion  and $\cm(\af_\infty)$  no $y$-torsion. 
\end{lem}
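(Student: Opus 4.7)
The plan is to start with any logarithmic model $\cm'$ of $M$ (which exists by assumption that $M$ is regular-singular) and surgically remove the offending torsion on each chart. Concretely, set $\cm'_0 := \cm'(\af_0)$, $\cm'_\infty := \cm'(\af_\infty)$, and consider their torsion submodules
\[
T_0 = \{m \in \cm'_0 : x^n m = 0 \text{ for some } n \geq 0\}, \qquad T_\infty = \{m \in \cm'_\infty : y^n m = 0 \text{ for some } n \geq 0\}.
\]
These are $R[x]$- (resp.\ $R[y]$-) submodules of finite type, since each ambient module is coherent.

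The first step is to check that $T_0$ and $T_\infty$ are stable under the derivation. This is a direct calculation using Leibniz's rule: if $x^n m = 0$, then applying $\na$ and using $\na(x^n m) = n x^n m + x^n \na(m)$ gives $x^n \na(m) = 0$, so $\na(m) \in T_0$; the same argument works for $T_\infty$. Hence $\cm'_0/T_0$ and $\cm'_\infty/T_\infty$ inherit natural logarithmic connections, and by construction they have no $x$- (resp.\ $y$-) torsion.

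Next I would glue. On the overlap $\af_0 \cap \af_\infty = \spc R[x^\pm]$, localization kills both $T_0$ and $T_\infty$ (multiplication by $x$ and by $y=x^{-1}$ is invertible there), so the canonical maps
\[
(\cm'_0/T_0) \otimes_{R[x]} R[x^\pm] \arou\sim M, \qquad (\cm'_\infty/T_\infty) \otimes_{R[y]} R[x^\pm] \arou\sim M
\]
are isomorphisms of objects of $\mc(R[x^\pm]/R)$. Using these two isomorphisms with $M$ as a common identification on the overlap (i.e.\ the gluing data inherited from $\cm'$ survives the passage to the quotients because it factors through the localizations), one obtains a coherent sheaf $\cm$ on $\PP_R$ equipped with a logarithmic connection, together with an isomorphism $\ga_\PP(\cm) \simeq M$. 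By construction $\cm(\af_0) = \cm'_0/T_0$ is $x$-torsion-free and $\cm(\af_\infty) = \cm'_\infty/T_\infty$ is $y$-torsion-free, which is what was required.

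The only mildly delicate point is verifying that the surgery on $\af_0$ and on $\af_\infty$ is compatible on the overlap, i.e.\ that after quotienting by the two torsion submodules separately, the restrictions to $\spc R[x^\pm]$ still coincide; but as just observed this is automatic because both sides are identified with $M$. No further obstacles arise.
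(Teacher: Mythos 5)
Your argument is correct and follows the same route as the paper's: start from an arbitrary logarithmic model, observe that the $x$-torsion (resp.\ $y$-torsion) submodule on $\af_0$ (resp.\ $\af_\infty$) is stable under the derivation by the Leibniz computation, and glue the quotients over $\spc R[x^\pm]$ where the torsion vanishes. The paper states exactly this more tersely, so your writeup is just a more explicit version of the official proof.
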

\begin{proof}Let $\cn$ be any logarithmic  model. The sub-module of $x$-torsion in $\cn(\af_0)$ is invariant under $\vt$. The submodule of $y$-torsion in $\cn(\af_\infty)$ is invariant under $\vt$.  We can therefore take the quotients to produce the required model. 
\end{proof}

\begin{lem}\label{05.07.2021--1}
Let $E\in\mc(R[x^\pm]/R)$ be given. Then, for each $\g p\in\spc R$, the $\bm k(\g p)[x^\pm]$-module $\bm k(\g p)[x^\pm]\otu{R[x^\pm]} E$ is locally free. 
\end{lem}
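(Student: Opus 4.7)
The plan is to equip the base change with its natural connection over $\bm k(\g p)$ and then apply the ideal-theoretic criterion of Remark~\ref{09.09.2020--1}.

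First I would identify the module in question with $\bm k(\g p)\otimes_R E$: the canonical isomorphism $\bm k(\g p)\otimes_R R[x^\pm]\simeq\bm k(\g p)[x^\pm]$ gives a canonical isomorphism of $\bm k(\g p)[x^\pm]$-modules
\[
\bm k(\g p)[x^\pm]\otimes_{R[x^\pm]}E\;\simeq\;\bm k(\g p)\otimes_R E,
\]
which I write $M$. Because $\vt$ is $R$-linear and $R\subset R[x^\pm]$ consists of $\vt$-constants, the derivation on $E$ descends along $R\to R_\g p\to\bm k(\g p)$ to equip $M$ with a natural connection over $\bm k(\g p)$; in other words, $M$ becomes an object of $\mc(\bm k(\g p)[x^\pm]/\bm k(\g p))$.

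Next, Remark~\ref{09.09.2020--1} tells us that every Fitting ideal of $M$ is stable under $\vt$, so if the only $\vt$-stable ideals of $\bm k(\g p)[x^\pm]$ are $(0)$ and $(1)$, then $M$ is forced to be either zero or projective of constant rank, and in either case locally free. The problem thus reduces to a concrete claim about ideals of the Laurent polynomial ring $\bm k(\g p)[x^\pm]$. Being a principal ideal domain, any non-zero ideal has the form $(f)$ with $f=\sum_{i=m}^n a_ix^i$ and $a_m a_n\neq 0$. Writing $\vt f=gf$, a comparison of the lowest and highest non-zero terms on both sides---with a brief case distinction to handle the disappearance of the constant term under $\vt$---will force $g$ to be a non-zero scalar $c\in\bm k(\g p)$; the equality $\sum i a_ix^i=c\sum a_ix^i$ then requires $i=c$ in $\bm k(\g p)$ whenever $a_i\neq 0$. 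Since characteristic zero keeps the integers pairwise distinct, $f$ must be a monomial, hence a unit, and $(f)=(1)$.

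The hardest step is this last elementary argument on $\vt$-stable ideals; everything else---the base-change identification, the descent of $\vt$ along $R\to R_\g p\to\bm k(\g p)$ (automatic because $\g p R[x^\pm]$ is generated by elements of $R$, all of them $\vt$-constants), and the invocation of Remark~\ref{09.09.2020--1}---is entirely formal.
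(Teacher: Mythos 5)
Your proof is correct and takes exactly the route the paper takes: the paper's proof of this lemma is a one-line reference to Remark~\ref{09.09.2020--1} (or to Katz), and you have simply fleshed out the application—the canonical identification $\bm k(\g p)[x^\pm]\otimes_{R[x^\pm]}E\simeq\bm k(\g p)\otimes_R E$, the descent of the derivation through $R\to R_{\g p}\to\bm k(\g p)$ (using that $\g p$ consists of $\vt$-constants), and the elementary verification that the only $\vt$-stable ideals of the Laurent polynomial ring $\bm k(\g p)[x^\pm]$ are $(0)$ and $(1)$.
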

\begin{proof}See either \cite[Proposition 8.9]{katz70},  or Remark \ref{09.09.2020--1}. 
\end{proof}

We are unfortunately unable to find a proof of Theorem \ref{20.10.2020--8} based simply on Corollary \ref{07.07.2020--1} and the equivalence $\bb r_0^\wedge$. 
Hence, we shall need to go through the arguments used to establish Theorem \ref{02.03.2020--2} (the analogue of Theorem \ref{20.10.2020--8} in the formal case) and adapt them.  Luckily, there are no major modifications, except that the process of  taking the limit allowed by $\g r$-adic completeness of $\pos Rx$  needs to be replaced by Grothendieck's GFGA Theorem for sheaves on $\PP_R$. See \cite{illusie05} for a complete proof of this result  and \cite[3.2]{harbater03} for a valuable outline. Note that this is also the technique employed in \cite{hai-dos_santos20}, which renders the matter technically more demanding. 

When   employing GFGA in this context, we are     hindered by the following difficulty. Say that $\cm$ is  a coherent $\co_{\PP_R}$-module   such that, for every $k\in\NN$, the $\co_{\PP}$-module (with action of $R_k$) $\cm|_k:=\cm/\g r^{k+1}$  carries a logarithmic connection $\na_k:\cm|_k\to\cm|_k$, and that, in addition, the natural isomorphisms 
\[
\cm|_{k+1} \arou\sim\cm|_k
\]
are compatible with the logarithmic connections. Since $\cm$  {\it is not} the sheaf $\lip_k\cm_k$, we need to ask:   is it possible to endow $\cm$ with a logarithmic connection $\na:\cm\to\cm$ inducing the various $\na_k$? The answer is yes, as we now explain. 

Let $\ce$ be a coherent $\co_{\PP_R}$-module and introduce 
 $J\ce$ as being   the sheaf of $R$-modules   $\ce\op\ce$.  Endow it with the structure of an $\co_{\PP_R}$-module by 
\[
a\po(e,e') = (ae,ae'+\vt(a)e).
\]
Write $p:J\ce\to \ce$ for the projection onto the first factor. 
It is not hard to see that $J\ce$ remains  coherent
and that a logarithmic connection is none other than an $\co_{\PP_R}$-linear arrow 
\[\si: \ce\aro J\ce\]
such that $p\si=\id$. Indeed, if $p\si=\id$, then $\si=(\id,\na)$, where $\na$ is a logarithmic connection.
We now return to the question raised above and state it as a Lemma for future referencing. 

\begin{lem}\label{04.07.2021--1}Let $\cm$ be  a coherent $\co_{\PP_R}$-module   such that, for every $k\in\NN$, the $\co_{\PP}$-module (with action of $R_k$) $\cm|_k:=\cm/\g r^{k+1}$  carries a logarithmic connection $\na_k:\cm|_k\to\cm|_k$, and that, in addition, the natural isomorphisms 
\[
\cm|_{k+1} \arou\sim\cm|_k
\]
are compatible with these  connections. Then $\cm$ carries a logarithmic $R$-linear connection $\na$ inducing $\na_k$ for each $k$. In addition, if $\cn$ is an object of $\mclog(\PP_R/R)$ and $\Ph:\cm\to\cn$ is an arrow of coherent $\co_{\PP_R}$-modules such that $\Ph|_k:\cm|_k\to\cn|_k$ lies in $\mclog(\PP/C)_{(R_k)}$ for each $k$, then $\Ph$ is actually an arrow of $\mclog(\PP_R/R)$.  
\end{lem}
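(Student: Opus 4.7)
The plan is to interpret both claims through the coherent sheaf $J\cm$ introduced just above the statement, combined with Grothendieck's GFGA. Since $R$ is $\g r$-adically complete and $\PP_R \to \spc R$ is proper, GFGA asserts that for any two coherent $\co_{\PP_R}$-modules $\ce$ and $\cf$, the natural map
\[
\hh{\co_{\PP_R}}{\ce}{\cf} \aro \lip_k \hh{\co_\PP}{\ce|_k}{\cf|_k}
\]
is bijective. A preliminary remark, essentially tautological from the definition $J\ce = \ce \op \ce$ with the $\vt$-twisted $\co_{\PP_R}$-module structure, is that formation of $J$ commutes with reduction modulo $\g r^{k+1}$: there is a canonical isomorphism $(J\ce)|_k \simeq J(\ce|_k)$ compatible with the projection $p$.

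For the first assertion, each logarithmic connection $\na_k$ corresponds to a section $\si_k : \cm|_k \to J(\cm|_k) = (J\cm)|_k$ of $p|_k$, and the hypothesized compatibility between $\na_{k+1}$ and $\na_k$ translates into compatibility of the family $(\si_k)_k$ under truncation. Applying GFGA to the coherent $\co_{\PP_R}$-modules $\cm$ and $J\cm$, this family comes from a unique morphism $\si : \cm \to J\cm$. The composition $p \circ \si$ is a morphism $\cm \to \cm$ which reduces to $\id_{\cm|_k}$ modulo $\g r^{k+1}$ for every $k$; by the uniqueness clause of GFGA (i.e.\ injectivity of the same map) one concludes $p \circ \si = \id_\cm$, so $\si$ defines a logarithmic connection $\na$ on $\cm$ inducing each $\na_k$.

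For the second assertion, observe that for any $\co_{\PP_R}$-linear arrow $\Ph : \cm \to \cn$ between objects equipped with logarithmic connections, a direct application of Leibniz's rule shows that the \emph{a priori} only $R$-linear map
\[
[\na, \Ph] := \na_\cn \circ \Ph - \Ph \circ \na_\cm : \cm \aro \cn
\]
is in fact $\co_{\PP_R}$-linear, hence a morphism of coherent $\co_{\PP_R}$-modules. The hypothesis that each $\Ph|_k$ lies in $\mclog(\PP/C)_{(R_k)}$ says precisely that $[\na,\Ph]|_k = 0$ for every $k$, and a final invocation of GFGA forces $[\na,\Ph] = 0$, so that $\Ph$ is horizontal. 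The main obstacle in carrying out this plan is purely bookkeeping: one must take care that $J$ is genuinely natural with respect to truncation and that the GFGA hypotheses apply to $J\cm$ (which requires its coherence, already asserted in the paper), after which the argument proceeds formally.
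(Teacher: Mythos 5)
Your proposal is correct and follows essentially the same route as the paper: encode connections as sections of $p:J\cm\to\cm$, observe that $J$ commutes with truncation, and invoke GFGA (both its surjectivity and its injectivity clauses). You simply spell out two points the paper leaves implicit — that $p\circ\si=\id$ follows by GFGA uniqueness, and that the second claim reduces to applying GFGA injectivity to the $\co_{\PP_R}$-linear defect $[\na,\Ph]$ — whereas the paper compresses the latter into ``proved with similar techniques.''
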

\begin{proof}
Let $\si_k:\cm|_k\to J\cm|_k$ be defined by $\si_k=(\id,\na_k)$. We then obtain, by GFGA, an arrow $\si:\cm\to J\cm$ such that $p\si=\id$, that is, a logarithmic connection. 
The final claim is also proved with similar techniques. 
\end{proof}
We can now give the first step towards  Theorem \ref{20.10.2020--8}.

\begin{thm}[Deligne-Manin models]\label{12.10.2020--3}
	Let $M\in\mcrs(R[x^\pm]/R)$. There exists a unique logarithmic model $\cm$ of $M$ such that, for every $k\in\NN$, the object 
	\[
	\cm|_{k}\in\mclog(\PP/C)_{(R_k)},
	\] 
	enjoys the ensuing properties: 
	\begin{enumerate}[(1)]
		\item All its exponents lie in $\tau$. 
		\item It is free in relation to $R_k$.
		\item The isomorphism $\ga_\PP(\cm|_k)\simeq M|_k$ is compatible with the action of $R_k$.  
	\end{enumerate} 
	Put otherwise, $\cm|_k$ is a Deligne-Manin model in the sense of Theorem \ref{10.07.2020--3}. 
\end{thm}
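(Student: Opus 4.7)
The plan is to imitate closely the proof of Theorem \ref{16.05.2020--1}, substituting $\g r$-adic completeness of $\pos Rx$ by Grothendieck's GFGA applied to the proper $R$-scheme $\PP_R$, and invoking the analogous results from Section \ref{28.06.2021--1} (Theorems \ref{10.07.2020--3} and \ref{12.10.2020--2} plus Proposition \ref{12.10.2020--1}) in place of their formal counterparts. For each $k\in\NN$, apply Theorem \ref{10.07.2020--3} to $M|_k\in\mcrs(C[x^\pm]/C)_{(R_k)}$ to obtain a logarithmic lattice $\cm_k\in\mclog(\PP/C)_{(R_k)}$ satisfying (1)--(3). Since exponents are insensitive to reduction (Corollary \ref{17.05.2020--1}), the truncation $\cm_{k+1}|_k$ is again a Deligne-Manin lattice for $(M|_{k+1})|_k=M|_k$, and Theorem \ref{12.10.2020--2} produces a unique isomorphism $\ph_k:\cm_{k+1}|_k\stackrel\sim\to\cm_k$ in $\mclog(\PP/C)_{(R_k)}$ compatible with the canonical identifications.

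By GFGA for coherent sheaves on $\PP_R$, the compatible system $\{(\cm_k,\ph_k)\}_k$ descends to a unique (up to unique isomorphism) coherent $\co_{\PP_R}$-module $\cm$ with natural identifications $\cm|_k\simeq\cm_k$; Lemma \ref{04.07.2021--1} then assembles the connections $\na_k$ into a logarithmic $R$-linear connection $\na:\cm\to\cm$. By construction, each $\cm|_k$ enjoys the required properties (1)--(3).

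The main obstacle, as in Theorem \ref{16.05.2020--1}, is to verify that $\cm$ is in fact a logarithmic \emph{model} of $M$, since the $\cm_k$ are {\it a priori} truncations only of $\cm$ and not of $M$. Choose, by Lemma \ref{06.07.2021--1}, a logarithmic model $\MM$ of $M$ with no $x$-torsion on $\af_0$ and no $y$-torsion on $\af_\infty$. For each $k$, Proposition \ref{12.10.2020--1} yields a unique arrow $\ps_k:\MM|_k\to\cm(\de)|_k$ in $\mclog(\PP/C)_{(R_k)}$ extending the identity on $M|_k$, where $\de$ is the largest integer in $\mm{Exp}(\cm|_k)\ominus\mm{Exp}(\MM|_k)$. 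Because exponents are constant under truncation (Corollary \ref{17.05.2020--1}), the integer $\de$ can be chosen independently of $k$, and uniqueness forces the $\ps_k$ to be compatible. Invoking GFGA together with Lemma \ref{04.07.2021--1}, the $\ps_k$ lift to an arrow $\ps:\MM\to\cm(\de)$ in $\mclog(\PP_R/R)$. It remains to show that $\ga_\PP(\ps):M\to\ga_\PP(\cm)(\de)$ is an isomorphism. Its kernel and cokernel are objects of $\mc(R[x^\pm]/R)$ whose reductions modulo $\g r^{k+1}$ vanish for all $k$, hence their supports in $\spc R[x^\pm]$ miss a neighbourhood of the closed fibre. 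Combining the fibrewise local freeness of connections (Lemma \ref{05.07.2021--1}) with the faithfully flat descent argument of Lemma \ref{11.05.2020--1} (exactly as in the final step of the proof of Theorem \ref{16.05.2020--1}) forces kernel and cokernel to vanish globally, so $\ps$ witnesses $\cm(\de)$ as a model of $M$ and, shifting back, so is $\cm$.

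For uniqueness, suppose $\cm'$ is another logarithmic model of $M$ with the stated truncation properties. For each $k$, Theorem \ref{12.10.2020--2} provides a unique isomorphism $\cm|_k\stackrel\sim\to\cm'|_k$ in $\mclog(\PP/C)_{(R_k)}$ lifting the identity on $M|_k$; uniqueness makes these isomorphisms compatible with the $\ph_k$, so GFGA together with Lemma \ref{04.07.2021--1} assembles them into a unique isomorphism $\cm\stackrel\sim\to\cm'$ in $\mclog(\PP_R/R)$ lifting the identity on $M$.
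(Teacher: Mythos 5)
Your proof is correct and takes essentially the same approach as the paper: for each $k$ you invoke Theorem \ref{10.07.2020--3} and Theorem \ref{12.10.2020--2} to obtain a compatible system of Deligne-Manin lattices over the truncations $\PP_{R_k}$, algebraize this system via GFGA supplemented by Lemma \ref{04.07.2021--1}, and then verify that the resulting $\cm$ is genuinely a logarithmic model of $M$ by comparing with a convenient $\MM$ (Lemma \ref{06.07.2021--1}) through the bounded arrows of Proposition \ref{12.10.2020--1}, concluding via Lemmas \ref{05.07.2021--1} and \ref{11.05.2020--1}. The paper's proof is only indicative (a list of substitutions in the proof of Theorem \ref{16.05.2020--1}), and in particular does not spell out the uniqueness part of the statement, which you supply at the end; this is a welcome addition rather than a deviation.
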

\begin{proof}
	This is much the same as the proof of  Theorem \ref{16.05.2020--1} and we shall give only some indications of how to replace the arguments in its proof to the present context.

	{\it For Step 1.} The use of Theorem \ref{10.03.2020--2} is replaced by that of Theorem \ref{10.07.2020--3} and Theorem \ref{12.10.2020--2}. The use the $\g r$-adic completeness of $\pos Rx$  is replaced by  GFGA {\it supplemented} by Lemma \ref{04.07.2021--1}. We then arrive at an object $\cm\in\mclog(\PP_R/R)$. 
	
	{\it For Step 2.} We replace Lemma \ref{15.07.2020--1} by Lemma \ref{06.07.2021--1} in finding a convenient logarithmic model $\MM$  for $M$. We then replace Proposition \ref{09.04.2019--1} and Corollary \ref{17.05.2020--1} by Proposition \ref{12.10.2020--1}. To continue, we employ GFGA and Lemma \ref{04.07.2021--1} instead of completeness of $\pos Rx$ and  Lemma \ref{05.07.2021--1} instead of Theorem \ref{18.05.2020--1}. 
\end{proof}

\begin{proof}[Proof of Theorem \ref{20.10.2020--8}]  {\it Essential surjectivity.} Let $\{M_k,\ph_k\}_{k\in \NN}$
be	in $\mcrs(R[x^\pm]/R)^\wedge$. For each $k$, let $\cm_k\in \mclog(\PP/C)_{(R_k)}$ be a Deligne-Manin lattice for $M_k$ (cf. Theorem \ref{10.07.2020--3}). 
Because of Theorem \ref{12.10.2020--2}, the isomorphisms 
	\[
	\ph_k:M_{k+1}|_k\arou\sim M_k
	\]
may be extended to isomorphisms 
	\[
	\Ph_k:\cm_{k+1}|_k\arou\sim\cm_k
	\]
	of $\mclog(\PP_C/C)_{(R_k)}$. 
	
By  GFGA, there exists a coherent sheaf $\cm$ on $\PP_R$ and isomorphisms    $\cm|_k\simeq \cm_k$  such    that the natural transition isomorphisms  correspond to the $\Ph_k$ above. 
Lemma \ref{04.07.2021--1}  now shows that $\cm$ comes with a logarithmic connection and we arrive at an object of $\mclog(\PP_R/R)$. Then, $M=\ga_{\PP}(\cm)$ is an object in $\mcrs(R[x^\pm]/R)$ satisfying $M|_k\simeq M_k$ for each $k\in \NN$.

 {\it Fullness.} Let $M$ and $N$ be objects of $\mcrs(R[x^\pm]/R)$.  
 For each $k\in \NN$, let
\[
\ph_k:M|_k\longrightarrow N|_k
\]
be an arrow in $\mcrs(C[x^\pm]/C)_{(R_k)}$ and suppose that $\ph_{k+1}|_{k}=\ph_{k}$.
 Pick Deligne-Manin models $\cm$ and $\cn$ of $M$ and $N$ as in Theorem \ref{12.10.2020--3}.  By   Theorem \ref{12.10.2020--2}, there exists, for any given $k$, an arrow $\Ph_{k}: \cm|_k \to \cn|_k$ in $\mclog(\PP/C)_{(R_k)}$ such that $\ga_\PP(\Ph_k)=\ph_{k}$. In addition, uniqueness of the extension forces  $ \Ph_{k+1}|_{k} =\Ph_{k}$ for each $k$. By GFGA, there exists an arrow $\Ph : \mathcal{M} \to \mathcal{N}$ of coherent $\co_{\PP_R}$-modules
satisfying $\Ph |_{k}=\Ph_{k}$ for each $k\in \mathds{N}$. From Lemma \ref{04.07.2021--1}, we can also affirm that $\Ph$  is an arrow of 
 $\mclog(\PP_R/R)$.  The arrow $\ph=\ga_\PP(\Ph)$ lies in $\mcrs(R[x^\pm]/R )$ and induces $\ph_k$ for each $k\in \mathds{N}$.
	
 {\it Faithfulness.} Let $\ph: M\to N$  be an arrow in $\mcrs(R[x^\pm]/R)$ such that $\ph_k: M|_{k}\to N|_{k}$ is null for all $k\in \NN$. 
We conclude that $I=\mm{Im}(\ph)\subset\cap_k\g r^kN$. By Nakayama's Lemma \cite[Theorem 2.2, p.8]{matsumura89}, there exists $a\equiv1\mod\g r$  such that  $aI=0$. Hence, $I_{\g P}=0$ if $\g P\in\spc   R[x^\pm]$ is above $\g r$. To show that $I=0$, we only require  Lemma \ref{11.05.2020--1} and Lemma \ref{05.07.2021--1}. 
\end{proof}


\end{document}